\providecommand{\U}[1]{\protect\rule{.1in}{.1in}}
\newtheorem{theorem}{Theorem}[section]
\newtheorem{corollary}[theorem]{Corollary}
\newtheorem{lemma}[theorem]{Lemma}
\newtheorem{problem}[theorem]{Problem}
\newtheorem{proposition}[theorem]{Proposition}
\newenvironment{proof}[1][Proof]{\noindent\textbf{#1.} }{\ \rule{0.5em}{0.5em}}
\begin{document}

\author{Vadim E. Levit\\Department of Mathematics\\Ariel University, Israel\\levitv@ariel.ac.il
\and Eugen Mandrescu\\Department of Computer Science\\Holon Institute of Technology, Israel\\eugen\_m@hit.ac.il}
\title{Almost Bipartite non-K\"{o}nig-Egerv\'{a}ry Graphs Revisited}
\date{}
\maketitle

\begin{abstract}
Let $\alpha(G)$ denote the cardinality of a maximum independent set, while
$\mu(G)$ be the size of a maximum matching in $G=\left(  V,E\right)  $. It is
known that if $\alpha(G)+\mu(G)=\left\vert V\right\vert $, then $G$ is a
\textit{K\"{o}nig-Egerv\'{a}ry graph }\cite{dem,ster}.

The \textit{critical difference} $d(G)$ is $\max\{d(I):I\in\mathrm{Ind}(G)\}$,
where $\mathrm{Ind}(G)$\ denotes the family of all independent sets of $G$. If
$A\in\mathrm{Ind}(G)$ with $d\left(  X\right)  =d(G)$, then $A$ is a
\textit{critical independent set} \cite{Zhang1990}. For a graph $G$, let
$\mathrm{diadem}(G)=\bigcup\{S:S$ is a critical independent set in $G\}$, and
$\varrho_{v}\left(  G\right)  $ denote the number of vertices $v\in V\left(
G\right)  $, such that $G-v$ is a K\"{o}nig-Egerv\'{a}ry graph. A graph is
called almost bipartite if it has a unique odd cycle.

In this paper we show that if $G$ is an almost bipartite
non-K\"{o}nig-Egerv\'{a}ry graph with the unique odd cycle $C$, then the
following assertions are true:

\begin{itemize}
\item every maximum matching of $G$ contains $\left\lfloor \frac{V(C)}%
{2}\right\rfloor $ edges belonging to $C$;

\item $V(C)\cup N_{G}\left[  \mathrm{diadem}\left(  G\right)  \right]  =V$ and
$V(C)\cap N_{G}\left[  \mathrm{diadem}\left(  G\right)  \right]  =\emptyset$;

\item $\varrho_{v}\left(  G\right)  =\left\vert \mathrm{corona}\left(
G\right)  \right\vert -\left\vert \mathrm{diadem}\left(  G\right)  \right\vert
$, where $\mathrm{corona}\left(  G\right)  $ is the union of all maximum
independent sets of $G$;

\item $\varrho_{v}\left(  G\right)  =\left\vert V\right\vert $ if and only if
$G=C_{2k+1}$ for some integer $k\geq1$.

\end{itemize}

\textbf{Keywords:} maximum independent set, critical set, matching,
K\"{o}nig-Egerv\'{a}ry graph.

\end{abstract}

\section{Introduction}

Throughout this paper $G=(V,E)$ is a finite, undirected, loopless graph
without multiple edges, with vertex set $V=V(G)$ of cardinality $\left\vert
V\left(  G\right)  \right\vert =n\left(  G\right)  $, and edge set $E=E(G)$ of
size $\left\vert E\left(  G\right)  \right\vert =m\left(  G\right)  $.

If $X\subset V$, then $G[X]$ is the subgraph of $G$ induced by $X$. By $G-v$
we mean the subgraph $G[V-\left\{  v\right\}  ]$, for $v\in V$. The
\textit{neighborhood} of a vertex $v\in V$ is the set $N(v)=\{w:w\in V$ and
$vw\in E\}$. The \textit{neighborhood} of $A\subseteq V$ is $N(A)=\{v\in
V:N(v)\cap A\neq\emptyset\}$, and $N\left[  A\right]  =A\cup N(A)$, or
$N_{G}(A)$ and $N_{G}\left[  A\right]  $, if we specify the graph. If
$A,B\subset V$ are disjoint, then $\left(  A,B\right)  =\left\{  ab:ab\in
E,a\in A,b\in B\right\}  $.

A set $S\subseteq V(G)$ is \textit{independent} if no two vertices from $S$
are adjacent, and by $\mathrm{Ind}(G)$ we mean the family of all the
independent sets of $G$. An independent set of maximum size is a
\textit{maximum independent set} of $G$, and $\alpha(G)=\max\{\left\vert
S\right\vert :S\in\mathrm{Ind}(G)\}$. Let $\Omega(G)$ be the family of all
maximum independent sets, $\mathrm{corona}(G)=%
{\textstyle\bigcup}
\{S:S\in\Omega(G)\}$, and $\mathrm{core}(G)=%
{\textstyle\bigcap}
\{S:S\in\Omega(G)\}$, while $\xi(G)=\left\vert \mathrm{core}(G)\right\vert $
\cite{LevMan2002a}. A vertex $v\in V(G)$ is $\alpha$-\textit{critical
}provided $\alpha(G-v)<\alpha(G)$. Clearly, $\mathrm{core}(G)$ is the set of
all $\alpha$-critical vertices of $G$. An edge $e\in E(G)$ is $\alpha
$-\textit{critical }provided $\alpha(G)<\alpha(G-e)$. Notice that there are
graphs in which every edge is $\alpha$-critical (e.g., all $C_{2k+1}$ for
$k\geq1$) or no edge is $\alpha$-critical (e.g., all $C_{2k}$ for $k\geq2$).

A \textit{matching} in a graph $G=(V,E)$ is a set of edges $M\subseteq E$ such
that no two edges of $M$ share a common vertex. A matching of maximum
cardinality $\mu(G)$ is a \textit{maximum matching}, and a \textit{perfect
matching} is one saturating all vertices of $G$. Given a matching $M$ in $G$,
a vertex $v\in V$ is called $M$-saturated if there exists an edge $e\in M$
incident with $v$.

An edge $e\in E(G)$ is $\mu$-\textit{critical }provided $\mu(G-e)<\mu(G)$. A
vertex $v\in V(G)$ is $\mu$-\textit{critical (essential) }provided
$\mu(G-v)<\mu(G)$, i.e., $v$ is $M$-saturated by every maximum matching $M$ of
$G$.

It is known that the inequalities $\left\lfloor \frac{n\left(  G\right)  }%
{2}\right\rfloor +1\leq\alpha(G)+\mu(G)\leq n\left(  G\right)  \leq
\alpha(G)+2\mu(G)$ hold for every graph $G$ \cite{BGL2002}. If $\alpha
(G)+\mu(G)=n\left(  G\right)  $, then $G$ is called a K\"{o}nig-Egerv\'{a}ry
graph\textit{ }\cite{dem,ster}. If $S$ is an independent set of a graph $G$
and $A=V\left(  G\right)  -S$, then we write $G=S\ast A$. For instance, if
$E(G[A])=\emptyset$, then $G=S\ast A$ is bipartite; if $G[A]$ is a complete
graph, then $G=S\ast A$ is a split graph.

\begin{theorem}
\label{th715}\cite{LevMan2002a} For a graph $G$, the following properties are equivalent:

\emph{(i)} $G$ is a \textit{K\"{o}nig-Egerv\'{a}ry graph};

\emph{(ii) }$G=S\ast A$, where $S\in$ $\mathrm{Ind}(G)$, $\left\vert
S\right\vert \geq\left\vert A\right\vert $, and $\left(  S,A\right)  $
contains a matching $M$ with $\left\vert M\right\vert =\left\vert A\right\vert
$.
\end{theorem}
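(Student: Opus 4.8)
The plan is to establish the two implications separately, leaning on the universal bound $\alpha(G)+\mu(G)\leq n\left(G\right)$ that holds for every graph \cite{BGL2002}, and to do the only real work in the forward direction via a short counting argument on a maximum matching.

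For the implication (ii) $\Rightarrow$ (i), I would simply sandwich $\alpha(G)+\mu(G)$. Given the decomposition $G=S\ast A$ with $S\in\mathrm{Ind}(G)$ and a matching $M\subseteq\left(S,A\right)$ of size $\left\vert A\right\vert$, the independence of $S$ yields $\alpha(G)\geq\left\vert S\right\vert$, while the matching yields $\mu(G)\geq\left\vert M\right\vert=\left\vert A\right\vert$. Adding these gives $\alpha(G)+\mu(G)\geq\left\vert S\right\vert+\left\vert A\right\vert=n\left(G\right)$, and combining with the general upper bound forces equality, so $G$ is K\"{o}nig-Egerv\'{a}ry. No structural information about $G$ is needed here.

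For (i) $\Rightarrow$ (ii), I would start from a maximum independent set $S\in\Omega(G)$ and set $A=V(G)-S$, so that $\left\vert S\right\vert=\alpha(G)$ and $\left\vert A\right\vert=n\left(G\right)-\alpha(G)=\mu(G)$ by the K\"{o}nig-Egerv\'{a}ry hypothesis. The inequality $\left\vert S\right\vert\geq\left\vert A\right\vert$ then follows because a matching saturates at most $n\left(G\right)$ vertices, whence $\mu(G)\leq n\left(G\right)/2\leq n\left(G\right)-\mu(G)=\alpha(G)$. It remains to produce a matching inside $\left(S,A\right)$ that saturates $A$, and my candidate is any maximum matching $M$ of $G$. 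The key observation is that, since $S$ is independent, every edge of $M$ has at least one endpoint in $A$; as the edges of $M$ are vertex-disjoint, this means $M$ saturates at least $\left\vert M\right\vert=\left\vert A\right\vert$ distinct vertices of $A$. Since it cannot saturate more than $\left\vert A\right\vert$, it saturates exactly $\left\vert A\right\vert$, and this tightness forces each edge of $M$ to carry exactly one endpoint in $A$ (hence the other in $S$) and every vertex of $A$ to be $M$-saturated. Therefore $M\subseteq\left(S,A\right)$ with $\left\vert M\right\vert=\left\vert A\right\vert$, exactly as required.

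The only delicate point is this counting step, where one must simultaneously extract that $M$ lies entirely in $\left(S,A\right)$ and that it saturates all of $A$; the argument succeeds precisely because the equality $\left\vert A\right\vert=\mu(G)$, supplied by the K\"{o}nig-Egerv\'{a}ry condition, makes the two bounds on saturated $A$-vertices coincide. Everything else reduces to the standard inequalities relating $\alpha(G)$, $\mu(G)$, and $n\left(G\right)$.
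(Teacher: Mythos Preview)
Your proof is correct. Both implications are handled cleanly: the sandwich argument for (ii) $\Rightarrow$ (i) is standard, and the counting in (i) $\Rightarrow$ (ii) is tight and complete---the observation that equality in the saturated-$A$-vertex count forces every edge of $M$ to have exactly one endpoint in $A$ is the crux, and you state it correctly.

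There is nothing to compare against, however: the paper does not prove this theorem. It is quoted from \cite{LevMan2002a} as a known characterisation of K\"{o}nig-Egerv\'{a}ry graphs and used as a tool (for instance in the proof of Theorem~\ref{conj1}), but no argument for it appears in the present paper. Your write-up is essentially the standard proof one finds for this equivalence, and it would serve perfectly well as a self-contained justification were one needed here.
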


\begin{theorem}
\label{th43} If $G$ is a K\"{o}nig-Egerv\'{a}ry graph, then

\emph{(i)} \cite{LevMan2003} $\mathrm{corona}(G)\cup N_{G}\left(
\mathrm{core}(G)\right)  =V(G);$

\emph{(ii) }\cite{LevMan2012a} $d(G)=\alpha(G)-\mu(G)=\left\vert
\mathrm{core}(G)\right\vert -\left\vert N(\mathrm{core}(G))\right\vert ;$

\emph{(iii)} \cite{LevMan2014} $\left\vert \mathrm{core}\left(  G\right)
\right\vert +\left\vert \mathrm{corona}\left(  G\right)  \right\vert
=2\alpha(G).$
\end{theorem}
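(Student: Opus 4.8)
The plan is to treat the three parts in the order (ii), (i), (iii), since the last will fall out of the first two. Throughout I fix, via Theorem~\ref{th715}, a maximum independent set $S$ and $A=V\setminus S$, so that $\left\vert S\right\vert =\alpha(G)\geq\left\vert A\right\vert =\mu(G)$ and a matching $M\subseteq\left(  S,A\right)  $ saturates $A$; the argument is driven by the interaction of $M$ with the members of $\Omega(G)$. Three cheap facts will be used repeatedly: $\mathrm{core}(G)\subseteq S\subseteq\mathrm{corona}(G)$; every vertex of $N_{G}(\mathrm{core}(G))$ is adjacent to a vertex lying in all maximum independent sets, hence lies in none, so $\mathrm{corona}(G)\cap N_{G}(\mathrm{core}(G))=\emptyset$; and therefore $N_{G}(\mathrm{core}(G))\subseteq A$.

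For (ii) I would prove the two equalities separately. After the remarks above, the equality $d(G)=\left\vert \mathrm{core}(G)\right\vert -\left\vert N(\mathrm{core}(G))\right\vert $ amounts to the counting identity $\left\vert S\setminus\mathrm{core}(G)\right\vert =\left\vert A\setminus N(\mathrm{core}(G))\right\vert $, which I would establish by showing that $M$ restricts to a bijection of $A\setminus N(\mathrm{core}(G))$ onto $S\setminus\mathrm{core}(G)$ (the companion facts, that $M$ carries $N(\mathrm{core}(G))$ into $\mathrm{core}(G)$ and that every $M$-unsaturated vertex of $S$ belongs to $\mathrm{core}(G)$, come out of the same analysis). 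The mechanism is the alternating-path description of $\Omega(G)$: a vertex of $S$ can be dropped from some maximum independent set precisely when an $M$-alternating path reaches it from an insertable vertex, and dualizing this across the edges of $M$ pairs the droppable vertices of $S$ with the vertices of $A$ not pinned to $\mathrm{core}(G)$. The identity rearranges to $\left\vert \mathrm{core}(G)\right\vert -\left\vert N(\mathrm{core}(G))\right\vert =\alpha(G)-\mu(G)$, which is the second equality and, as $\mathrm{core}(G)$ is independent, also gives $d(G)\geq\alpha(G)-\mu(G)$. For the reverse inequality let $I$ be independent and set $H=G-N_{G}[I]$; from $\alpha(G)\geq\left\vert I\right\vert +\alpha(H)$ and $\left\vert N(I)\right\vert =n(G)-n(H)-\left\vert I\right\vert $ one gets
\[
d(I)=2\left\vert I\right\vert -n(G)+n(H)\leq\left(  \alpha(G)-\mu(G)\right)  +\left(  n(H)-2\alpha(H)\right)  .
\]
Applied to a set $I$ attaining $d(G)$, this yields $d(G)\leq\alpha(G)-\mu(G)$ as soon as $\alpha(H)\geq n(H)/2$, i.e. as soon as $H$ is itself a K\"{o}nig-Egerv\'{a}ry graph; the two bounds together settle (ii).

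For (i) I would prove the sharper $\mathrm{corona}(G)=V\setminus N_{G}(\mathrm{core}(G))$, the inclusion $\subseteq$ being one of the cheap facts. Put $H=G-N_{G}[\mathrm{core}(G)]$. Using the bijection from (ii) one checks that the edges of $M$ lying in $H$ form a maximum matching and that $H$ is K\"{o}nig-Egerv\'{a}ry with $\alpha(G)=\left\vert \mathrm{core}(G)\right\vert +\alpha(H)$; moreover the maximum independent sets of $H$ are exactly the sets $T\setminus\mathrm{core}(G)$ with $T\in\Omega(G)$, which forces $\mathrm{core}(H)=\emptyset$. It then suffices to prove one base statement: a K\"{o}nig-Egerv\'{a}ry graph with empty core has every vertex in a maximum independent set. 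Granting this for $H$ gives $\mathrm{corona}(H)=V(H)$, whence $\mathrm{corona}(G)\supseteq\mathrm{core}(G)\cup V(H)=V\setminus N_{G}(\mathrm{core}(G))$. The base statement is once more an alternating-path fact: an empty core makes every vertex of $S$ removable, and (since then $\alpha(G)=\mu(G)$, so $M$ is a perfect matching between $A$ and $S$) transporting removability along $M$ shows every vertex of $A$ is insertable.

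Finally (iii) is immediate: by (i) together with the disjointness above, $V$ is the disjoint union of $\mathrm{corona}(G)$ and $N_{G}(\mathrm{core}(G))$, so $\left\vert \mathrm{corona}(G)\right\vert +\left\vert N(\mathrm{core}(G))\right\vert =n(G)=\alpha(G)+\mu(G)$; substituting $\left\vert N(\mathrm{core}(G))\right\vert =\left\vert \mathrm{core}(G)\right\vert -\left(  \alpha(G)-\mu(G)\right)  $ from (ii) gives $\left\vert \mathrm{core}(G)\right\vert +\left\vert \mathrm{corona}(G)\right\vert =2\alpha(G)$. I expect the single recurring obstacle to be the alternating-path/matching bookkeeping: it underpins the bijection in (ii), the emptiness of $\mathrm{core}(H)$ and the base statement in (i), and—most delicately—the fact that deleting the closed neighborhood of a critical set keeps the graph K\"{o}nig-Egerv\'{a}ry, which is exactly what powers the upper bound $d(G)\leq\alpha(G)-\mu(G)$.
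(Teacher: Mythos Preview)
The paper does not prove Theorem~\ref{th43}: each part is quoted from earlier work (\cite{LevMan2003}, \cite{LevMan2012a}, \cite{LevMan2014}) and used as background, so there is no in-paper argument to compare against. Your overall architecture---establishing (ii), then (i), then reading off (iii)---is sound, and the derivation of (iii) from (i) and (ii) is clean.

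Two remarks on the gaps you flag. First, the step you call ``most delicate''---that $H=G-N_{G}[I]$ is K\"{o}nig--Egerv\'{a}ry when $I$ is critical and $G$ is---is in fact short once you use the tools already quoted in the paper. By Theorem~\ref{th333}(iii) there is a matching $M_{I}$ from $N(I)$ into $I$, and by Theorem~\ref{th11} it extends to a maximum matching $M$ of $G$; every edge of $M\setminus M_{I}$ lies in $H$, so $\mu(H)\geq\mu(G)-\left\vert N(I)\right\vert $. Combined with $\alpha(H)\geq\alpha(G)-\left\vert I\right\vert $ (Theorem~\ref{th333}(i)) and $n(H)=n(G)-\left\vert I\right\vert -\left\vert N(I)\right\vert $, this gives $\alpha(H)+\mu(H)\geq n(H)$, hence equality. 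Alternatively you can bypass the issue entirely: from $\mu(G)\geq\mu(H)+\left\vert N(I)\right\vert $ and the trivial $n(H)\geq 2\mu(H)$ one gets $d(I)\leq\alpha(G)-\mu(G)$ directly, never needing $\alpha(H)\geq n(H)/2$.

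Second, your bijection claim in (ii) hides part (i). For any $T\in\Omega(G)$ and any edge $ab\in M$ one has $\left\vert T\cap\{a,b\}\right\vert =1$ (count: $\left\vert V\setminus T\right\vert =\mu(G)=\left\vert M\right\vert $), so $M(a)\in\mathrm{core}(G)$ iff $a\notin\mathrm{corona}(G)$. Thus $M$ bijects $A\cap\mathrm{corona}(G)$ with $S\setminus\mathrm{core}(G)$ for free; upgrading this to ``$M$ bijects $A\setminus N(\mathrm{core}(G))$ with $S\setminus\mathrm{core}(G)$'' is exactly the assertion $A\setminus N(\mathrm{core}(G))=A\cap\mathrm{corona}(G)$, i.e.\ part (i). So your (ii)-argument is not independent of (i); the alternating-path machinery must really do both at once, and as written the appeal to it is too vague to stand on its own.
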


For $X\subseteq V(G)$, the number $\left\vert X\right\vert -\left\vert
N(X)\right\vert $ is the \textit{difference} of $X$, denoted $d(X)$. The
\textit{critical difference} $d(G)$ is $\max\{d(I):I\in\mathrm{Ind}(G)\}$. If
$A\in\mathrm{Ind}(G)$ with $d\left(  X\right)  =d(G)$, then $A$ is a
\textit{critical independent set} \cite{Zhang1990}. For a graph $G$, let
$\mathrm{MaxCritIndep}(G)=\{S:S$ \textit{is a maximum critical independent
set}$\}$ \cite{LevMan2022c}, $\mathrm{\ker}(G)$ be the intersection of all its
critical independent sets and $\varepsilon(G)=\left\vert \mathrm{\ker
}(G)\right\vert $ \cite{LevMan2012a,LevMan2012c}. The \textit{critical
independence number} of a graph $G$, denoted as $\alpha^{\prime}\left(
G\right)  $, is the cardinality of a maximum critical independent set
\cite{Larson2011}.

\begin{theorem}
\label{th333} \emph{(i)} \cite{ButTruk2007} Each critical independent set is
included in some $S\in\Omega(G)$.

\emph{(ii)} \cite{Larson2007} Every critical independent set is contained in
some $S\in\mathrm{MaxCritIndep}(G)$.

\emph{(iii)} \cite{Larson2007} There is a matching from $N(S)$ into $S$ for
every critical independent set $S$.
\end{theorem}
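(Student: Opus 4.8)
The plan is to establish the three parts in the order (iii), (i), (ii), since my route to (ii) will use the matching produced in (iii). For (iii), I would verify the Hall-type condition that $\left\vert N(T)\cap S\right\vert \geq\left\vert T\right\vert $ for every $T\subseteq N(S)$, after which a matching saturating $N(S)$ into $S$ exists by Hall's theorem. If the condition failed for some $T$, I would set $S^{\prime}=S\setminus N(T)$: since no vertex of $S^{\prime}$ is then adjacent to $T$, we get $N(S^{\prime})\subseteq N(S)\setminus T$, so $\left\vert N(S^{\prime})\right\vert \leq\left\vert N(S)\right\vert -\left\vert T\right\vert $, while $\left\vert S^{\prime}\right\vert >\left\vert S\right\vert -\left\vert T\right\vert $ by the assumed deficiency. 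This yields $d(S^{\prime})>d(S)=d(G)$, contradicting the maximality of the critical difference. I expect this to be the cleanest part.

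For (i), I would fix a critical independent set $S$ and an arbitrary $I\in\Omega(G)$, and form $J=S\cup\left(  I\setminus N(S)\right)  $. First I would check that $J$ is independent, noting that an edge between $S$ and $I\setminus N(S)$ would force its $I$-endpoint into $N(S)$. Then a direct count gives $\left\vert J\right\vert =\left\vert S\right\vert +\left\vert I\right\vert -\left\vert I\cap N(S)\right\vert \geq\left\vert I\right\vert +\left(  \left\vert S\right\vert -\left\vert N(S)\right\vert \right)  =\alpha(G)+d(G)\geq\alpha(G)$, where the last step uses only $d(G)\geq d(\emptyset)=0$. Hence $J\in\Omega(G)$ and $S\subseteq J$. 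It is worth noting that this step needs only $d(G)\geq0$, not (iii).

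The real work is (ii). I would take $S$ critical and $T$ a maximum critical independent set, and set $R=T\cap N(S)$ and $T^{\prime}=T\setminus N(S)$. First, $S\cup T^{\prime}$ is independent, since any crossing edge would again put a $T^{\prime}$-vertex into $N(S)$. Restricting the matching from (iii) to $R\subseteq N(S)$ gives each $r\in R$ a distinct partner $s_{r}\in S$; because $s_{r}$ is adjacent to $r\in T$ and $T$ is independent, $s_{r}\in S\setminus T$, so $\left\vert R\right\vert \leq\left\vert S\setminus T\right\vert $ and therefore $\left\vert S\cup T^{\prime}\right\vert \geq\left\vert T\right\vert =\alpha^{\prime}(G)$. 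The same partners satisfy $s_{r}\in N(R)\setminus\left(  N(S)\cup N(T^{\prime})\right)  $, which yields $\left\vert N(S\cup T)\right\vert -\left\vert N(S\cup T^{\prime})\right\vert \geq\left\vert R\right\vert $. Writing $d^{\ast}(X)=\left\vert X\right\vert -\left\vert N(X)\right\vert $ for arbitrary $X$, the submodularity of $N$ gives $d^{\ast}(S\cup T)+d(S\cap T)\geq d(S)+d(T)=2d(G)$, and since $S\cap T$ is independent, $d(S\cap T)\leq d(G)$, so $d^{\ast}(S\cup T)\geq d(G)$. Combining the neighborhood count with the identity $d(S\cup T^{\prime})=d^{\ast}(S\cup T)-\left\vert R\right\vert +\bigl(\left\vert N(S\cup T)\right\vert -\left\vert N(S\cup T^{\prime})\right\vert \bigr)$ forces $d(S\cup T^{\prime})\geq d^{\ast}(S\cup T)\geq d(G)$; the reverse inequality is automatic from independence, so $S\cup T^{\prime}$ is critical of size $\geq\alpha^{\prime}(G)$, hence a maximum critical independent set containing $S$.

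The step I expect to be the main obstacle is precisely the bookkeeping in (ii): converting the matching partners $s_{r}$ into the exact inequality $\left\vert N(S\cup T)\right\vert -\left\vert N(S\cup T^{\prime})\right\vert \geq\left\vert R\right\vert $ and reconciling it with submodularity so that $d(S\cup T^{\prime})=d(G)$ is forced. The danger is an off-by-one in the neighborhood count, or a crossing edge I have failed to exclude; once (iii) is available and these identities are pinned down, the remainder is routine.
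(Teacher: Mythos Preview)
The paper does not prove Theorem~\ref{th333}; it is quoted as a known result from \cite{ButTruk2007} and \cite{Larson2007}. So there is no in-paper proof to compare against, and I can only comment on the internal correctness of your proposal.

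Your argument for \emph{(iii)} is correct, and your argument for \emph{(ii)} is correct as well: the submodularity step, the identity for $d(S\cup T')$, and the use of the matching partners $s_r$ all check out.

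There is, however, a genuine gap in your argument for \emph{(i)}. The claimed identity
\[
\left\vert J\right\vert=\left\vert S\right\vert+\left\vert I\right\vert-\left\vert I\cap N(S)\right\vert
\]
is false: since $S\cap N(S)=\emptyset$ one has $S\cap\bigl(I\setminus N(S)\bigr)=S\cap I$, so the correct count is
\[
\left\vert J\right\vert=\left\vert S\right\vert+\left\vert I\right\vert-\left\vert I\cap N(S)\right\vert-\left\vert S\cap I\right\vert
=\left\vert I\right\vert+\left\vert S\setminus I\right\vert-\left\vert I\cap N(S)\right\vert.
\]
To conclude $\left\vert J\right\vert\geq\left\vert I\right\vert$ you now need $\left\vert S\setminus I\right\vert\geq\left\vert I\cap N(S)\right\vert$, and this does \emph{not} follow from $d(G)\geq 0$ alone. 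The fix is exactly the device you already use in \emph{(ii)}: restrict the matching from \emph{(iii)} to $I\cap N(S)\subseteq N(S)$; each $v\in I\cap N(S)$ is matched to a distinct $s_v\in S$, and since $v\in I$ is adjacent to $s_v$ and $I$ is independent, $s_v\in S\setminus I$. This gives the required injection $I\cap N(S)\hookrightarrow S\setminus I$. So, contrary to your remark, part \emph{(i)} does rely on \emph{(iii)} in your approach; once you insert this one line, the proof of \emph{(i)} is complete.
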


Some recent developments involving critical independent sets may be found in
\cite{HongliangYang2023,YHQ2022}.

\begin{theorem}
\label{th100}\cite{Larson2011} For any graph $G$, there is a unique set
$X\subseteq V(G)$ such that

\emph{(i)} $X=N\left[  A\right]  $\ for every $A\in\mathrm{MaxCritIndep}(G)$; \ 

\emph{(ii)} $G\left[  X\right]  $ is a \textit{K\"{o}nig-Egerv\'{a}ry} graph.
\end{theorem}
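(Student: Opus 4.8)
The plan is to handle the two clauses separately: clause (ii) is essentially immediate from Theorem~\ref{th715}, and the whole difficulty lies in the well-definedness claimed by clause~(i).

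For clause (ii), fix any $A\in\mathrm{MaxCritIndep}(G)$ and set $X=N[A]$. Since $\emptyset$ is independent, $d(G)\geq0$, so $d(A)=d(G)$ forces $\left\vert A\right\vert \geq\left\vert N(A)\right\vert$. In $G[X]$ the set $A$ is independent with $X\setminus A=N(A)$, and Theorem~\ref{th333}(iii) provides a matching saturating $N(A)$ into $A$. Thus $G[X]=A\ast N(A)$ satisfies the hypotheses of Theorem~\ref{th715}, and $G[X]$ is a K\"{o}nig-Egerv\'{a}ry graph. This works verbatim for \emph{every} maximum critical independent set, so all that remains is to show that $N[A]$ is independent of the chosen $A$.

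For clause (i), take $A,B\in\mathrm{MaxCritIndep}(G)$; the goal is $N[A]=N[B]$. I would first invoke the standard fact that $d(X)\leq d(G)$ for \emph{every} $X\subseteq V(G)$, not only for independent sets \cite{Zhang1990}. Together with $N(A\cup B)=N(A)\cup N(B)$ and $N(A\cap B)\subseteq N(A)\cap N(B)$, submodularity then yields $d(A\cap B)=d(A\cup B)=d(G)$; hence $A\cap B$ is again critical, and comparing cardinalities gives $N(A\cap B)=N(A)\cap N(B)$. Because each maximum critical independent set $S$ has $\left\vert S\right\vert=\alpha^{\prime}(G)$ and $\left\vert N(S)\right\vert=\left\vert S\right\vert-d(G)$, we obtain $\left\vert N[A]\right\vert=\left\vert N[B]\right\vert=2\alpha^{\prime}(G)-d(G)$. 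Since $N[A]\cup N[B]=N[A\cup B]$ and $N[A]\subseteq N[A\cup B]$, the equality $N[A]=N[B]$ is equivalent to $\left\vert N[A\cup B]\right\vert=2\alpha^{\prime}(G)-d(G)$.

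Unwinding this last cardinality by means of $d(A\cup B)=d(G)$ and the observation that the only edges of $G[A\cup B]$ join $A\setminus B$ to $B\setminus A$, the target reduces to the combinatorial claim $(\star)$: every vertex of $A\triangle B$ has a neighbour in the opposite set, i.e. no vertex of $A\triangle B$ is isolated in $G[A\cup B]$. Establishing $(\star)$ is the main obstacle. Suppose, say, some $b\in B\setminus A$ has no neighbour in $A$; then $A\cup\{b\}$ is independent of size $\alpha^{\prime}(G)+1$, so the maximality of $A$ forces it to be non-critical, whence $\left\vert N(b)\setminus N(A)\right\vert\geq2$, with these neighbours lying in $N(B)\setminus N(A)$. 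By the previous paragraph the matching of Theorem~\ref{th333}(iii) for $B$ restricts to a bijection between $N(B)\setminus N(A)$ and $B\setminus A$, so $b$ sits on an alternating path of that matching, while deleting $b$ from $B$ shows that $b$ has at most one private neighbour in $N(B)$. I expect the contradiction to emerge from pushing this alternating/exchange argument through, using the maximality of $A$ and of $B$ simultaneously; this exchange step is the crux. Once $(\star)$ is proved we get $N[A]=N[B]$, the set $X=N[A]$ is well-defined, and clause~(ii) completes the theorem.
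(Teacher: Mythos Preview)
The paper does not prove Theorem~\ref{th100}; it is quoted from \cite{Larson2011} as a known result, so there is no in-paper argument to compare your attempt against.

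Your treatment of clause~(ii) via Theorem~\ref{th715} and Theorem~\ref{th333}(iii) is correct, and your reduction of clause~(i) to the claim~$(\star)$ is sound. The genuine gap is that $(\star)$ is never established: you write that you ``expect the contradiction to emerge'' from an alternating/exchange argument, but none is carried out, and the local facts you assemble (the offending $b$ has at least two neighbours in $N(B)\setminus N(A)$, at most one private neighbour in $N(B)$, the matching bijection $N(B)\setminus N(A)\to B\setminus A$) do not by themselves yield a contradiction. The missing idea is to argue globally rather than vertex by vertex. Set $A_{0}=A\setminus N[B]$ and $B_{0}=B\setminus N[A]$; then $A\cup B_{0}$ is independent with $d(A\cup B_{0})=d(G)+\left\vert B_{0}\right\vert-\left\vert N_{G}(B_{0})\setminus N[A]\right\vert$, so the maximality of $A$ forces $\left\vert B_{0}\right\vert<\left\vert N_{G}(B_{0})\setminus N[A]\right\vert\leq\left\vert N(B)\setminus N[A]\right\vert$ whenever $B_{0}\neq\emptyset$. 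A direct count using your identity $N(A\cap B)=N(A)\cap N(B)$ gives $\left\vert N(B)\setminus N[A]\right\vert=\left\vert A_{0}\right\vert$, hence $\left\vert B_{0}\right\vert<\left\vert A_{0}\right\vert$; symmetrically $\left\vert A_{0}\right\vert<\left\vert B_{0}\right\vert$ whenever $A_{0}\neq\emptyset$. These two strict inequalities are incompatible unless $A_{0}=B_{0}=\emptyset$, which is exactly~$(\star)$.
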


Let $\mathrm{nucleus}(G)=\bigcap\{S:S\in\mathrm{MaxCritIndep}(G)\}$,
$\mathrm{diadem}(G)=\bigcup\{S:S$ is a critical independent set in $G\}$
\cite{JarLevMan2019}, while $\beta(G)=\left\vert \mathrm{diadem}(G)\right\vert
$ \cite{LevMan2024a}.

If $\alpha(G)+\mu(G)=n\left(  G\right)  -1$, then $G$ is a $1$%
-\textit{K\"{o}nig-Egerv\'{a}ry graph }\cite{LevMan2024a}. The
\textit{K\"{o}nig deficiency} of graph $G$ is $\kappa\left(  G\right)
=n\left(  G\right)  -\left(  \alpha(G)+\mu(G)\right)  $ \cite{BartaKres2020}.
Thus, a graph $G$ is $1$-K\"{o}nig-Egerv\'{a}ry if and only if $\kappa\left(
G\right)  =1$.

A graph is: \textit{(i)} \textit{unicyclic} if it has a unique cycle
\cite{LevMan2012b}, and \textit{(ii)} \textit{almost bipartite} if it has only
one odd cycle \cite{LevMan2022}.

\begin{lemma}
\label{lem84}\cite{LevMan2022} If $G$ is an almost bipartite graph, then
$n(G)-1\leq\alpha(G)+\mu(G)\leq n(G)$.
\end{lemma}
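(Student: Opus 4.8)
The plan is to treat the two inequalities separately. The upper bound $\alpha(G)+\mu(G)\leq n(G)$ needs no argument specific to almost bipartite graphs, since it is part of the chain $\left\lfloor \frac{n(G)}{2}\right\rfloor +1\leq\alpha(G)+\mu(G)\leq n(G)\leq\alpha(G)+2\mu(G)$ that holds for \emph{every} graph \cite{BGL2002}. Hence all of the content lies in the lower bound $\alpha(G)+\mu(G)\geq n(G)-1$, which is equivalent to showing that the K\"{o}nig deficiency satisfies $\kappa(G)\leq 1$.

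For the lower bound, the decisive observation is that deleting a single well-chosen vertex turns $G$ into a bipartite graph. Let $C$ be the unique odd cycle of $G$ and fix any $v\in V(C)$. Because every odd cycle of $G$ must coincide with $C$, the graph $G-v$ contains no odd cycle whatsoever, so $G-v$ is bipartite. By K\"{o}nig's theorem the maximum matching and minimum vertex cover of a bipartite graph have equal size; combining this with the Gallai identity (the independence number plus the minimum vertex cover size equals the number of vertices) shows that every bipartite graph is a K\"{o}nig-Egerv\'{a}ry graph. Consequently $\alpha(G-v)+\mu(G-v)=n(G-v)=n(G)-1$.

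It then remains to transfer this equality back to $G$ by monotonicity under vertex addition. Every independent set of $G-v$ remains independent in $G$, so $\alpha(G)\geq\alpha(G-v)$; likewise every matching of $G-v$ is still a matching of $G$, so $\mu(G)\geq\mu(G-v)$. Adding these gives $\alpha(G)+\mu(G)\geq\alpha(G-v)+\mu(G-v)=n(G)-1$, which is exactly the desired lower bound. Together with the universal upper bound, this yields $n(G)-1\leq\alpha(G)+\mu(G)\leq n(G)$.

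I do not expect a genuine obstacle here: the argument is short once one notices that a single vertex deletion destroys the only odd cycle. The one step deserving care is the claim that $G-v$ is bipartite, and this is precisely where the hypothesis is used. If $G$ possessed two distinct odd cycles, deleting one vertex need not eliminate both, and the reduction to a K\"{o}nig-Egerv\'{a}ry graph would fail. Thus the ``almost bipartite'' assumption enters only through the guarantee that removing any vertex of $C$ leaves a bipartite graph, and nowhere else.
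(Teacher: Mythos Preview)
Your argument is correct. The upper bound is the standard Gallai-type inequality, and your lower-bound argument via deleting a vertex of the unique odd cycle is exactly the natural one: $G-v$ is bipartite, hence K\"{o}nig--Egerv\'{a}ry, and monotonicity of $\alpha$ and $\mu$ under vertex addition transfers the equality $\alpha(G-v)+\mu(G-v)=n(G)-1$ to the desired inequality for $G$.

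Note that the paper itself does not prove this lemma; it is quoted from \cite{LevMan2022} without argument. Your proof is essentially the standard one and is in the same spirit as the paper's later use of the observation (e.g., in the proof of Proposition~\ref{prop15} and Proposition~\ref{prop18}) that $G-v$ is bipartite for every $v\in V(C)$.
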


Consequently, one may say that each almost bipartite graph is either a
K\"{o}nig-Egerv\'{a}ry graph or a $1$-K\"{o}nig-Egerv\'{a}ry graph. For
instance, both $G_{1}$ and $G_{2}$ from Figure \ref{fig123} are $1$%
-K\"{o}nig-Egerv\'{a}ry graphs, while only $G_{1}$ is almost bipartite.

\begin{figure}[h]
\setlength{\unitlength}{1cm}\begin{picture}(5,1.2)\thicklines
\multiput(2,0)(1,0){4}{\circle*{0.29}}
\multiput(4,1)(1,0){2}{\circle*{0.29}}
\put(2,1){\circle*{0.29}}
\put(2,0){\line(1,0){3}}
\put(2,0){\line(0,1){1}}
\put(2,1){\line(1,-1){1}}
\put(4,0){\line(0,1){1}}
\put(4,1){\line(1,0){1}}
\put(5,0){\line(0,1){1}}
\put(1,0.5){\makebox(0,0){$G_{1}$}}
\multiput(8,0)(1,0){5}{\circle*{0.29}}
\multiput(11,1)(1,0){2}{\circle*{0.29}}
\put(8,1){\circle*{0.29}}
\put(8,0){\line(1,0){4}}
\put(8,0){\line(0,1){1}}
\put(8,1){\line(1,-1){1}}
\put(11,1){\line(1,0){1}}
\put(10,0){\line(1,1){1}}
\put(12,0){\line(0,1){1}}
\put(7,0.5){\makebox(0,0){$G_{2}$}}
\end{picture}\caption{$1$-K\"{o}nig-Egerv\'{a}ry graphs}%
\label{fig123}%
\end{figure}
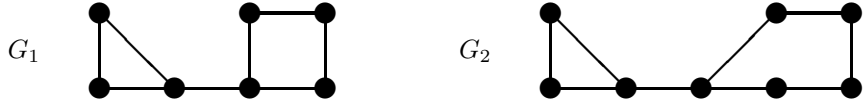

\begin{theorem}
\label{th44} If $G$ is an almost bipartite non-K\"{o}nig-Egerv\'{a}ry graph, then

\emph{(i) }\cite{LevMan2023a} $\mathrm{corona}\left(  G\right)  \cup
N_{G}\left(  \mathrm{core}\left(  G\right)  \right)  =V\left(  G\right)  ;$

\emph{(ii) }\cite{LevMan2022} $d(G)=\alpha(G)-\mu(G)=\left\vert \mathrm{core}%
(G)\right\vert -\left\vert N(\mathrm{core}(G))\right\vert ;$

\emph{(iii) }\cite{LevMan2023a} $\left\vert \mathrm{core}\left(  G\right)
\right\vert +\left\vert \mathrm{corona}\left(  G\right)  \right\vert
=2\alpha\left(  G\right)  +1$.
\end{theorem}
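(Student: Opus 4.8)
The plan is to fix the global parameters first and then reduce everything to the König-Egerv\'ary case by deleting a single vertex of the odd cycle. Since $G$ is almost bipartite, Lemma \ref{lem84} gives $n(G)-1\le\alpha(G)+\mu(G)\le n(G)$, and as $G$ is non-K\"onig-Egerv\'ary we must have $\alpha(G)+\mu(G)=n(G)-1$, i.e. $\kappa(G)=1$. Let $C$ be the unique odd cycle, $|V(C)|=2k+1$. The engine of the argument is that for any $v\in V(C)$ the graph $G-v$ contains no odd cycle, hence is bipartite and therefore K\"onig-Egerv\'ary, so that Theorem \ref{th43} applies verbatim to $G-v$ and the three assertions become transfer statements from $G-v$ back to $G$.

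Before transferring, I would establish two structural facts. First, $V(C)\cap\mathrm{core}(G)=\emptyset$: if some $u\in V(C)$ lay in every maximum independent set then $\alpha(G-u)=\alpha(G)-1$, while $G-u$ being K\"onig-Egerv\'ary forces $\mu(G-u)=n(G)-1-\alpha(G-u)=\mu(G)+1$, contradicting $\mu(G-u)\le\mu(G)$. Consequently, for every $v\in V(C)$ we have $\alpha(G-v)=\alpha(G)$, and then $\kappa(G)=1$ together with the K\"onig-Egerv\'ary identity for $G-v$ yields $\mu(G-v)=\mu(G)$. Second, I would record the comparison of extremal families $\Omega(G-v)=\{S\in\Omega(G):v\notin S\}\subseteq\Omega(G)$, whence $\mathrm{corona}(G-v)\subseteq\mathrm{corona}(G)$ and $\mathrm{core}(G)\subseteq\mathrm{core}(G-v)$.

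With this in hand each part reduces to bookkeeping. For (iii), writing $P=\mathrm{corona}(G)\setminus\mathrm{corona}(G-v)$ and $Q=\mathrm{core}(G-v)\setminus\mathrm{core}(G)$, Theorem \ref{th43}(iii) applied to $G-v$ reduces the claim $\left\vert\mathrm{core}(G)\right\vert+\left\vert\mathrm{corona}(G)\right\vert=2\alpha(G)+1$ to $|P|-|Q|=1$, the $+1$ accounting for the single surplus vertex. For (ii), note that for an independent set $I$ with $v\notin I$ one has $|I|-|N_{G-v}(I)|$ exceeding $|I|-|N_G(I)|$ by one precisely when $v\in N_G(I)$; combined with Theorem \ref{th333}(iii), which supplies a matching from $N(S)$ into any critical independent set $S$, this lets me identify $\mathrm{core}(G)$ as a difference-maximizing set and read off $d(G)=\alpha(G)-\mu(G)=\left\vert\mathrm{core}(G)\right\vert-\left\vert N(\mathrm{core}(G))\right\vert$ from the corresponding equality for $G-v$. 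For (i) I would argue contrapositively: if $u\notin\mathrm{corona}(G)$ then $N(u)\cap S\neq\emptyset$ for every $S\in\Omega(G)$, and Theorem \ref{th43}(i) applied to $G-v$ places $u$ in $N(\mathrm{core}(G-v))$; the remaining task is to descend from a common neighbor in $\mathrm{core}(G-v)$ to one in $\mathrm{core}(G)$.

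The main obstacle, common to all three parts, is to show that reinstating the cycle perturbs the extremal families by exactly one vertex, as in the base case $G=C_{2k+1}$ (there $G-v=P_{2k}$, $\mathrm{core}(G-v)=\emptyset$, $\mathrm{corona}(G-v)=V\setminus\{v\}$, and the surplus is precisely $v$). Concretely, I expect the hard step to be proving $V(C)\subseteq\mathrm{corona}(G)$ together with $|P|-|Q|=1$ simultaneously. I would approach it through the K\"onig-Egerv\'ary structure of $G-v$: writing $G-v=S\ast A$ as in Theorem \ref{th715}, with a matching of $N(\cdot)$ into the independent side, the sets $P$ and $Q$ are governed by the alternating paths emanating from the unique deficiency created by $\kappa(G)=1$. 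The delicate point is to verify that the two edges joining $v$ to $C$ attach to this alternating-path structure so that exactly one vertex enters $\mathrm{corona}(G)$ net of any loss from $\mathrm{core}(G)$, rather than triggering a cascade that unbalances $|P|$ against $|Q|$; ruling out such a cascade, using the independence of $\mathrm{core}(G)$ and the fact that $C$ is the only odd cycle of $G$, is where the real work lies.
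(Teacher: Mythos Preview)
The paper does not contain a proof of this theorem: it is quoted as background in the Introduction, with parts (i) and (iii) attributed to \cite{LevMan2023a} and part (ii) to \cite{LevMan2022}. There is therefore no in-paper argument to compare against.

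As for your proposal itself, the preliminary reductions are sound: from Lemma~\ref{lem84} you correctly get $\alpha(G)+\mu(G)=n(G)-1$; the argument that $V(C)\cap\mathrm{core}(G)=\emptyset$ is valid; and for $v\in V(C)$ the graph $G-v$ is bipartite with $\alpha(G-v)=\alpha(G)$, $\mu(G-v)=\mu(G)$, and $\Omega(G-v)=\{S\in\Omega(G):v\notin S\}$. But the proposal stops short of a proof at exactly the point that matters. For (iii) you reduce to $|P|-|Q|=1$ and then only say you ``would approach it'' via alternating paths, explicitly conceding that ``ruling out such a cascade \dots\ is where the real work lies''; for (i) the phrase ``the remaining task is to descend from a common neighbor in $\mathrm{core}(G-v)$ to one in $\mathrm{core}(G)$'' names a step without performing it; and for (ii) the assertion that one can ``identify $\mathrm{core}(G)$ as a difference-maximizing set'' is likewise unjustified. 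A concrete obstruction to the single-deletion strategy: for a \emph{fixed} $v\in V(C)$ there is no evident reason why a vertex $w\in\mathrm{corona}(G)\setminus\{v\}$ must lie in $\mathrm{corona}(G-v)$, since every $S\in\Omega(G)$ containing $w$ might also contain $v$ and hence fail to survive in $G-v$. Even the seemingly innocuous fact $v\in\mathrm{corona}(G)$ (needed to get $v\in P$) is, in this paper, derived \emph{from} Theorem~\ref{th44} via Corollary~\ref{cor3} and Proposition~\ref{prop10}, so invoking it here would be circular. To make the transfer work you would likely need either to vary $v$ over all of $V(C)$ simultaneously, or to bring in the matching structure of critical sets (as in Theorem~\ref{th333}(iii) and Lemma~\ref{lem7}) directly rather than through a single vertex deletion.
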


Lemma \ref{lem84} and Theorem \ref{th44}(\textit{ii}) directly imply the following.

\begin{corollary}
\label{cor8}If $G$ is an almost bipartite non-K\"{o}nig-Egerv\'{a}ry graph,
then%
\[
n\left(  G\right)  +d\left(  G\right)  =2\alpha(G)+1,\alpha(G)=\frac{n\left(
G\right)  +d\left(  G\right)  -1}{2},\mu(G)=\frac{n\left(  G\right)  -d\left(
G\right)  -1}{2},
\]
and consequently, $\mu(G)<\frac{n\left(  G\right)  }{2}$, i.e., $G$ has no
perfect matchings.
\end{corollary}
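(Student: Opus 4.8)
The plan is to combine exactly the two results named in the sentence preceding the statement, namely Lemma \ref{lem84} and Theorem \ref{th44}(ii), and then to finish with one elementary observation about the sign of $d(G)$. First I would use the hypothesis that $G$ is \emph{non}-K\"{o}nig-Egerv\'{a}ry: by definition this means $\alpha(G)+\mu(G)\neq n(G)$, so the upper bound in Lemma \ref{lem84} is strict, and the lower bound then forces $\alpha(G)+\mu(G)=n(G)-1$. In other words, $G$ is necessarily $1$-K\"{o}nig-Egerv\'{a}ry, which pins down the value of $\alpha(G)+\mu(G)$ exactly.

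Next I would record the two linear relations
\[
\alpha(G)+\mu(G)=n(G)-1 \qquad\text{and}\qquad \alpha(G)-\mu(G)=d(G),
\]
the second being the first equality in Theorem \ref{th44}(ii). Adding these gives $2\alpha(G)=n(G)+d(G)-1$, which is precisely the first displayed identity $n(G)+d(G)=2\alpha(G)+1$ and, upon solving, the stated formula $\alpha(G)=\tfrac{n(G)+d(G)-1}{2}$. Subtracting the second relation from the first gives $2\mu(G)=n(G)-d(G)-1$, yielding the stated formula $\mu(G)=\tfrac{n(G)-d(G)-1}{2}$.

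For the concluding assertion I would note that $d(G)\geq 0$ always holds, since the empty set is independent with $d(\emptyset)=\left\vert\emptyset\right\vert-\left\vert N(\emptyset)\right\vert=0$, and $d(G)=\max\{d(I):I\in\mathrm{Ind}(G)\}$. Plugging this into the formula for $\mu(G)$ gives
\[
\mu(G)=\frac{n(G)-d(G)-1}{2}\leq\frac{n(G)-1}{2}<\frac{n(G)}{2},
\]
so $2\mu(G)<n(G)$, which means no matching can saturate all $n(G)$ vertices; hence $G$ has no perfect matching.

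I do not expect a genuine obstacle here: the corollary is a direct arithmetic consequence of the two quoted facts together with $d(G)\geq 0$. The only step deserving a moment's care is the very first one, where one must argue that for an almost bipartite non-K\"{o}nig-Egerv\'{a}ry graph the inequality of Lemma \ref{lem84} collapses to the single value $\alpha(G)+\mu(G)=n(G)-1$; but this is immediate from the definition of a K\"{o}nig-Egerv\'{a}ry graph, which excludes the equality case $\alpha(G)+\mu(G)=n(G)$.
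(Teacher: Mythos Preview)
Your proposal is correct and matches the paper's approach exactly: the paper itself simply states that Lemma \ref{lem84} and Theorem \ref{th44}(ii) directly imply the corollary, and your writeup spells out precisely this arithmetic, including the appropriate use of $d(G)\geq 0$ for the final inequality.
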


It is worth mentioning that the order of an almost bipartite
non-K\"{o}nig-Egerv\'{a}ry graph may be even (see the graph $G_{2}$ in Figure
\ref{fig121212}).

\begin{theorem}
\label{th444}\cite{LevMan2012a} For a graph $G$, the following assertions are true:

\emph{(i)} $\mathrm{\ker}(G)\subseteq\mathrm{core}(G)$;

\emph{(ii)} if $A$ and $B$ are critical in $G$, then $A\cup B$ and $A\cap B$
are critical as well;

\emph{(iii)} $G$ has a unique minimal independent critical set, namely,
$\mathrm{\ker}(G)$.
\end{theorem}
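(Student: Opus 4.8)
The whole statement is powered by a supermodularity property of the difference function $d(X)=|X|-|N(X)|$. First I would record, for arbitrary $A,B\subseteq V(G)$, the inequality
\[
d(A\cup B)+d(A\cap B)\ge d(A)+d(B),
\]
which follows immediately from $N(A\cup B)=N(A)\cup N(B)$, from $N(A\cap B)\subseteq N(A)\cap N(B)$, and from inclusion--exclusion applied to $|A\cup B|$ and $|N(A)\cup N(B)|$; the slack equals $|N(A)\cap N(B)|-|N(A\cap B)|\ge0$. This is the engine behind (ii).

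Now suppose $A,B$ are critical, so $d(A)=d(B)=d(G)$. Since $A\cap B\subseteq A$ is independent, $d(A\cap B)\le d(G)$, and the inequality above gives $d(A\cup B)\ge 2d(G)-d(A\cap B)\ge d(G)$. To control $A\cup B$ I would write $A_{0}=A\setminus B$, $B_{0}=B\setminus A$, observe that every edge inside $A\cup B$ must join $A_{0}$ to $B_{0}$, and set $P=B_{0}\cap N(A_{0})$ and $Q=A_{0}\cap N(B_{0})$ (the endpoints of the conflicting edges). Deleting $P$ from $A\cup B$ produces the \emph{independent} set $S=(A\cup B)\setminus P$, and a direct count gives $d(S)\ge d(A\cup B)-|P|+|Q|\ge d(G)+|Q|-|P|$, because each vertex of $Q$ loses all of its neighbours in $S$; since $S$ is independent, $d(S)\le d(G)$, forcing $|Q|\le|P|$, and the symmetric deletion of $Q$ forces $|P|\le|Q|$. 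Hence $|P|=|Q|$, whence $d(S)\ge d(A\cup B)$ and therefore $d(A\cup B)\le d(G)$. Combining the two bounds yields $d(A\cup B)=d(A\cap B)=d(G)$, so $A\cap B$ is a critical independent set. The remaining and genuinely delicate point is that $A\cup B$ be itself \emph{independent} (equivalently $P=Q=\emptyset$); I expect this to be the main obstacle, and the place to spend effort is an exchange argument that uses the saturating matchings from $N(A)$ into $A$ and from $N(B)$ into $B$ provided by Theorem \ref{th333}(iii).

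Assertion (iii) is then formal. The intersection half of (ii) shows that the family of critical independent sets is closed under intersection, so by induction the intersection of the (finitely many) critical independent sets is again critical; that intersection is exactly $\mathrm{\ker}(G)$. Since $\mathrm{\ker}(G)$ is by definition contained in every critical independent set, it is the unique minimal one.

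For (i) I would exploit that $K:=\mathrm{\ker}(G)$ is critical and carries a matching $M$ from $N(K)$ into $K$ saturating $N(K)$ (Theorem \ref{th333}(iii)). Because $d(G)\ge0$ gives $|K|\ge|N(K)|$ and $M$ saturates $N(K)$, Theorem \ref{th715} identifies $G\left[N_{G}[K]\right]$ as a K\"{o}nig-Egerv\'{a}ry graph in which $K$ is a maximum independent set. I would then take an arbitrary $\Gamma\in\Omega(G)$ and argue $K\subseteq\Gamma$: if some $v\in K$ were missing from $\Gamma$, an $M$-alternating exchange confined to $N_{G}[K]$ would produce a critical independent set avoiding $v$, contradicting $v\in K=\bigcap\{S:S\text{ is critical}\}$. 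Consequently $K\subseteq\bigcap\{S:S\in\Omega(G)\}=\mathrm{core}(G)$. As in (ii), the matching-theoretic exchange is the hard part, and it is essentially the same mechanism that must close the independence of $A\cup B$.
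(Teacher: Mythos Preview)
The paper does not prove Theorem~\ref{th444}; it is quoted from \cite{LevMan2012a}. So there is no ``paper's proof'' to compare against, and I comment only on the correctness of your sketch.

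Your reading of assertion~(ii) is off, and this is the source of the ``genuinely delicate point'' you cannot close. In~(ii), \emph{critical} means a set $X$ with $d(X)=d(G)$; it does \emph{not} assert that $A\cup B$ is independent. Indeed that stronger claim is false: in $C_{4}$ the antipodal pairs $\{1,3\}$ and $\{2,4\}$ are both critical independent sets (each has $d=0=d(C_{4})$), yet their union is all of $V(C_{4})$ and is not independent. The paper itself uses~(ii) only in this weaker sense---for instance in the proof of Proposition~\ref{prop14}, to conclude that $\mathrm{diadem}(G)$, a union of critical independent sets which need not be independent, is critical. Ironically, your own argument with the sets $P,Q$ and the independent set $S=(A\cup B)\setminus P$ already \emph{proves} the correct statement: you obtain $d(S)\ge d(A\cup B)-|P|+|Q|$, and together with $d(S)\le d(G)$, $d(A\cup B)\ge d(G)$, and the symmetric inequality you deduce $|P|=|Q|$ and hence $d(A\cup B)=d(G)$. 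That, plus $d(A\cap B)=d(G)$, is all (ii) claims; the attempt to force $P=Q=\emptyset$ is chasing a falsehood. With (ii) in hand, your derivation of (iii) is correct as written.

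Your sketch for (i) is too vague to stand as a proof. The assertion ``an $M$-alternating exchange confined to $N_{G}[K]$ would produce a critical independent set avoiding $v$'' needs to be made precise; as stated it is not clear what is being exchanged or why the result remains critical. A clean route (consonant with \cite{LevMan2012a}) is: take any $S\in\Omega(G)$ and use the matching $M$ from $N(K)$ into $K$ to show $|K\setminus S|\ge |N(K)\cap S|$; then $(S\setminus N(K))\cup K$ is independent of size at least $|S|$, which forces equality and, with a little more work comparing $K\cap S$ to $K$ via the minimality of $K$ among critical independent sets, yields $K\setminus S=\emptyset$.
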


\begin{theorem}
\label{th2222}\cite{LevMan2013c,LevMan2022b} If $G$ is bipartite, or an almost
bipartite non-K\"{o}nig-Egerv\'{a}ry graph, then $\mathrm{\ker}%
(G)=\mathrm{core}(G)$.
\end{theorem}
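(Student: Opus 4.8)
The plan is to prove the nontrivial inclusion $\mathrm{core}(G)\subseteq\ker(G)$, since $\ker(G)\subseteq\mathrm{core}(G)$ is already Theorem \ref{th444}(i); if $\mathrm{core}(G)=\emptyset$ there is nothing to prove, so assume $\mathrm{core}(G)\neq\emptyset$. First I would record that $\mathrm{core}(G)$ is itself a critical independent set: it is independent as an intersection of independent sets, and by Theorem \ref{th43}(ii) in the bipartite (hence K\"onig--Egerv\'ary) case, or Theorem \ref{th44}(ii) in the almost bipartite non-K\"onig--Egerv\'ary case, one has $d(\mathrm{core}(G))=\left\vert\mathrm{core}(G)\right\vert-\left\vert N(\mathrm{core}(G))\right\vert=d(G)$, so $\mathrm{core}(G)$ attains the critical difference. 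Because $\ker(G)$ is the unique minimal critical independent set (Theorem \ref{th444}(iii)), and $A\cap\mathrm{core}(G)$ is critical for every critical $A$ (Theorem \ref{th444}(ii)), the whole statement reduces to showing that $\mathrm{core}(G)$ is minimal among critical sets, i.e.\ that it has no proper critical subset.

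To attack minimality I would induct on $n(G)$, using the residual graph $G_{0}=G-N_{G}[\ker(G)]$ as the engine. Since $\ker(G)$ is critical, $d(\ker(G))=\left\vert\ker(G)\right\vert-\left\vert N(\ker(G))\right\vert=d(G)$, and together with $\ker(G)\subseteq\mathrm{core}(G)$ and $N(\ker(G))\subseteq N(\mathrm{core}(G))$ this yields $\left\vert\mathrm{core}(G)\right\vert-\left\vert\ker(G)\right\vert=\left\vert N(\mathrm{core}(G))\right\vert-\left\vert N(\ker(G))\right\vert\geq 0$, so equality of the two sets is equivalent to $\left\vert N(\mathrm{core}(G))\right\vert=\left\vert N(\ker(G))\right\vert$. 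When $d(G)>0$ the set $\ker(G)$ is nonempty, so $G_{0}$ is a strictly smaller induced subgraph; it is again bipartite (resp.\ has at most one odd cycle), and the matching from $N(\ker(G))$ into $\ker(G)$ furnished by Theorem \ref{th333}(iii), combined with the extension property Theorem \ref{th333}(i), should let me peel $\ker(G)$ off and identify $\mathrm{core}(G)\setminus\ker(G)$ with $\mathrm{core}(G_{0})$, while forcing $d(G_{0})=0$. Thus the inductive step reduces everything to the base case $d(G)=0$.

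The base case $d(G)=0$ is where the real work and the main obstacle lie. Here $\emptyset$ is critical, so $\ker(G)=\emptyset$, and I must prove $\mathrm{core}(G)=\emptyset$. From $d(G)=0$ and Theorem \ref{th43}(ii)/\ref{th44}(ii) we get $\left\vert\mathrm{core}(G)\right\vert=\left\vert N(\mathrm{core}(G))\right\vert$, so the matching of Theorem \ref{th333}(iii) from $N(\mathrm{core}(G))$ into $\mathrm{core}(G)$ is in fact a perfect matching of $G[N_{G}[\mathrm{core}(G)]]$. In the bipartite case this, together with $\mathrm{corona}(G)\cup N_{G}(\mathrm{core}(G))=V$ (Theorem \ref{th43}(i)) and $\left\vert\mathrm{core}(G)\right\vert+\left\vert\mathrm{corona}(G)\right\vert=2\alpha(G)$ (Theorem \ref{th43}(iii)), should force $\mathrm{core}(G)=\emptyset$ via K\"onig's theorem. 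In the almost bipartite non-K\"onig--Egerv\'ary case the difficulty is the unique odd cycle $C$: I would have to show that $C$ cannot meet $N_{G}[\mathrm{core}(G)]$, since otherwise the perfect-matching structure on $G[N_{G}[\mathrm{core}(G)]]$ coexists with the odd cycle exactly as in the almost bipartite \emph{K\"onig--Egerv\'ary} graph obtained from a triangle with one pendant vertex, where $\mathrm{core}\neq\ker$. Localizing $C$ away from the core is precisely where the non-K\"onig--Egerv\'ary hypothesis must enter, through Corollary \ref{cor8} ($\mu(G)<n(G)/2$, so $G$ has no perfect matching and carries a genuine deficiency that $C$ must absorb). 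I expect this odd-cycle localization, and the verification that the deficiency is carried by $C$ rather than by $N_{G}[\mathrm{core}(G)]$, to be the crux of the argument; the triangle-plus-pendant example shows that dropping the non-K\"onig--Egerv\'ary hypothesis makes the conclusion false, so any correct proof must use it exactly at this step.
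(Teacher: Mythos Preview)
This theorem is not proved in the present paper; it is quoted from \cite{LevMan2013c,LevMan2022b}, so there is no in-paper argument to compare your attempt against.

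Your proposal is a strategy outline with genuine gaps that you yourself flag. The inductive step via $G_{0}=G-N_{G}[\ker(G)]$ is not justified: you do not verify that $G_{0}$ remains in the hypothesis class (in the almost bipartite non-K\"onig--Egerv\'ary case you must check both that the odd cycle survives intact and that $G_{0}$ is still non-K\"onig--Egerv\'ary), nor that $\mathrm{core}(G_{0})=\mathrm{core}(G)\setminus\ker(G)$, nor that $d(G_{0})=0$. More seriously, the base case $d(G)=0$ is left entirely open. In the bipartite subcase the implication ``$G$ has a perfect matching $\Rightarrow\mathrm{core}(G)=\emptyset$'' is itself a nontrivial structural fact that your pointers to Theorem~\ref{th43} do not supply. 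In the almost bipartite non-K\"onig--Egerv\'ary subcase you correctly isolate the localisation $V(C)\cap N_{G}[\mathrm{core}(G)]=\emptyset$ as central---this is precisely Proposition~\ref{prop10}, which the paper cites from \cite{LevMan2022}---but even granting it you still owe an argument ruling out a nonempty $\mathrm{core}(G)$ living entirely outside $N_{G}[V(C)]$ when $d(G)=0$. The proposal locates the difficulties accurately but does not resolve them.
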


Let $\varrho_{v}\left(  G\right)  $ denote the number of vertices $v\in
V\left(  G\right)  $, such that $G-v$ is a K\"{o}nig-Egerv\'{a}ry graph, and
$\varrho_{e}\left(  G\right)  $ denote the number of edges $e\in E\left(
G\right)  $ satisfying $G-e$ is a K\"{o}nig-Egerv\'{a}ry graph
\cite{LevMan2024}.

\begin{theorem}
\label{th9}\cite{LevMan2024} If $G$ is a K\"{o}nig-Egerv\'{a}ry graph, then%
\[
\varrho_{v}\left(  G\right)  =n\left(  G\right)  -\xi\left(  G\right)
+\varepsilon\left(  G\right)  \text{ and }\varrho_{e}\left(  G\right)  \leq
m\left(  G\right)  -\xi\left(  G\right)  +\varepsilon\left(  G\right)  .
\]

\end{theorem}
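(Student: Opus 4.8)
The plan is to convert each of $\varrho_v(G)$ and $\varrho_e(G)$ into a count of ``obstruction'' vertices, respectively edges, by tracking how $\alpha$ and $\mu$ move under deletion, and then to evaluate that count via the critical-set theory of Theorems~\ref{th333} and \ref{th444}. First I would treat the vertices. For any $v$ one has $\alpha(G-v)\in\{\alpha(G)-1,\alpha(G)\}$, the drop occurring precisely when $v\in\mathrm{core}(G)$, and $\mu(G-v)\in\{\mu(G)-1,\mu(G)\}$, the drop occurring precisely when $v$ is $\mu$-essential. Since $G$ is K\"{o}nig--Egerv\'{a}ry, $\alpha(G)+\mu(G)=n(G)$, so applying $\alpha(H)+\mu(H)\le n(H)$ to $H=G-v$ excludes the case in which neither quantity drops; hence $V=\mathrm{core}(G)\cup\{v:v\text{ is }\mu\text{-essential}\}$, and $G-v$ is \emph{not} K\"{o}nig--Egerv\'{a}ry exactly when both drops occur, i.e. when $v\in\mathrm{core}(G)$ and $v$ is $\mu$-essential. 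Writing $D$ for the set of vertices avoided by at least one maximum matching, this shows $D\subseteq\mathrm{core}(G)$ and
\[
\varrho_v(G)=n(G)-\left\vert\{v\in\mathrm{core}(G):v\text{ is }\mu\text{-essential}\}\right\vert=n(G)-\xi(G)+\left\vert D\right\vert .
\]

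The vertex formula thus reduces to the identity $\left\vert D\right\vert=\varepsilon(G)$, which I would prove in the sharp form $D=\mathrm{\ker}(G)$. For $\mathrm{\ker}(G)\subseteq D$: by Theorem~\ref{th333}(iii) there is a matching of $N(\mathrm{\ker}(G))$ into $\mathrm{\ker}(G)$, and since $\mathrm{\ker}(G)$ is critical, $\left\vert\mathrm{\ker}(G)\right\vert-\left\vert N(\mathrm{\ker}(G))\right\vert=d(G)$, so such a matching exposes $d(G)$ vertices of $\mathrm{\ker}(G)$; extending it by a maximum matching of $G-N[\mathrm{\ker}(G)]$ (which one checks is K\"{o}nig--Egerv\'{a}ry with critical difference $0$, hence has a perfect matching) yields a maximum matching of $G$, and an alternating-path re-routing argument, anchored by the minimality of $\mathrm{\ker}(G)$ from Theorem~\ref{th444}(iii), lets one expose any prescribed vertex of $\mathrm{\ker}(G)$. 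For $D\subseteq\mathrm{\ker}(G)$: a $\mathrm{core}(G)$-vertex avoided by some maximum matching should be shown to lie in a critical independent set, whence in $\mathrm{\ker}(G)$ by Theorem~\ref{th444}(ii),(iii). Together these give $\left\vert D\right\vert=\varepsilon(G)$ and the asserted equality for $\varrho_v(G)$.

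For the edges I would run the parallel arithmetic: $\alpha(G-e)\in\{\alpha(G),\alpha(G)+1\}$ (increasing iff $e$ is $\alpha$-critical) and $\mu(G-e)\in\{\mu(G)-1,\mu(G)\}$ (dropping iff $e$ is $\mu$-critical); the bound $\alpha(G-e)+\mu(G-e)\le n(G)$ forces every $\alpha$-critical edge to be $\mu$-critical, and $G-e$ fails to be K\"{o}nig--Egerv\'{a}ry exactly when $e$ is $\mu$-critical but not $\alpha$-critical. Hence
\[
\varrho_e(G)=m(G)-\left\vert\{e:e\text{ is }\mu\text{-critical and not }\alpha\text{-critical}\}\right\vert,
\]
so the inequality is equivalent to producing at least $\xi(G)-\varepsilon(G)=\left\vert\mathrm{core}(G)\setminus\mathrm{\ker}(G)\right\vert$ edges that are $\mu$-critical but not $\alpha$-critical. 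I would try to attach such an edge to each of the $\xi(G)-\varepsilon(G)$ $\mu$-essential vertices of $\mathrm{core}(G)$ isolated above, using that the essentiality of a $\mathrm{core}(G)$-vertex forces a matching edge at it while its core membership should prevent that edge from being $\alpha$-critical, and then verifying injectivity of the assignment.

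The main obstacle is the central lemma $D=\mathrm{\ker}(G)$: the ``every $\mathrm{\ker}(G)$-vertex is exposable'' inclusion needs the re-routing argument together with the decomposition of $G$ along $N[\mathrm{\ker}(G)]$, and the converse needs to realize an exposed core vertex inside a critical set. The second, more delicate, difficulty is the edge count, where only an inequality is expected precisely because a single $\mu$-essential core vertex need not determine a unique forced edge; making the assignment of forced non-$\alpha$-critical edges well defined and injective is the crux of that half.
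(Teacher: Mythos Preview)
This theorem is not proved in the present paper: it is quoted from \cite{LevMan2024} (note the citation immediately after the label), and the paper uses it only as a tool. Consequently there is no ``paper's own proof'' here to compare your proposal against.

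That said, your outline is the natural one and matches the architecture one would expect in \cite{LevMan2024}. The arithmetic reduction
\[
\varrho_v(G)=n(G)-\bigl|\{v\in\mathrm{core}(G):v\text{ is }\mu\text{-essential}\}\bigr|=n(G)-\xi(G)+|D|
\]
is correct, as is the identification of the remaining task with $|D|=\varepsilon(G)$. Your own assessment of the difficulties is accurate: the equality $D=\ker(G)$ is the heart of the vertex formula. The inclusion $\ker(G)\subseteq D$ follows cleanly from Theorem~\ref{th333}(iii) together with the K\"{o}nig--Egerv\'{a}ry decomposition along $N[\ker(G)]$; the converse inclusion, that any $\mathrm{core}(G)$-vertex exposed by a maximum matching already lies in $\ker(G)$, is the genuinely nontrivial step and is where one typically invokes the Dulmage--Mendelsohn/Gallai--Edmonds structure for bipartite or K\"{o}nig--Egerv\'{a}ry graphs rather than an ad hoc alternating-path argument. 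For the edge inequality your injective-assignment idea is the right shape, but as you note the injectivity needs care; the standard route is to fix a single maximum matching and take, for each $\mu$-essential vertex of $\mathrm{core}(G)\setminus\ker(G)$, the matching edge incident to it, which is automatically injective.
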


\begin{theorem}
\cite{LevMan2024a}\label{th10} If $G$ is a $1$-K\"{o}nig-Egerv\'{a}ry graph,
then%
\[
\varrho_{v}\left(  G\right)  \leq n\left(  G\right)  +d\left(  G\right)
-\xi\left(  G\right)  -\beta\left(  G\right)  =n\left(  G\right)  -\xi\left(
G\right)  +\left\vert N\left(  \mathrm{diadem}(G)\right)  \right\vert .
\]

\end{theorem}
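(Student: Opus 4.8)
The plan is to turn $\varrho_{v}(G)$ into an exact vertex count and then bound that count from below. Since $G$ is $1$-K\"{o}nig-Egerv\'{a}ry, $\alpha(G)+\mu(G)=n(G)-1$. Deleting a vertex lowers each of $\alpha$ and $\mu$ by at most one, so $\alpha(G-v)+\mu(G-v)\le n(G)-1$, with equality precisely when $\alpha(G-v)=\alpha(G)$ and $\mu(G-v)=\mu(G)$. Because $G-v$ is K\"{o}nig-Egerv\'{a}ry exactly when $\alpha(G-v)+\mu(G-v)=n(G)-1$, I would conclude that $G-v$ is K\"{o}nig-Egerv\'{a}ry if and only if $v$ is neither $\alpha$-critical nor $\mu$-essential. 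Writing $P$ for the set of $\mu$-essential vertices and recalling that $\mathrm{core}(G)$ is exactly the set of $\alpha$-critical vertices, this yields the exact identity $\varrho_{v}(G)=n(G)-|\mathrm{core}(G)\cup P|$, so everything reduces to showing $|\mathrm{core}(G)\cup P|\ge\xi(G)+|N(\mathrm{diadem}(G))|$.

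Put $D=\mathrm{diadem}(G)$. By Theorem \ref{th444}(ii) a union of critical independent sets is again critical, so $D$ is itself a critical independent set and $d(D)=d(G)$; in particular $|D|-|N(D)|=d(G)$, that is, $|N(D)|=\beta(G)-d(G)$. I would first record that $N(D)\cap\mathrm{core}(G)=\emptyset$: by Theorem \ref{th333}(i) the critical set $D$ is contained in some $S_{0}\in\Omega(G)$, and $\mathrm{core}(G)\subseteq S_{0}$ as well; a vertex of $N(D)\cap\mathrm{core}(G)$ would then lie in $S_{0}$ alongside one of its $D$-neighbours in $S_{0}$, contradicting the independence of $S_{0}$.

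The substantive step is $N(D)\subseteq P$, that is, that every vertex of $N(D)$ is saturated by every maximum matching. By Theorem \ref{th333}(iii) there is a matching $M^{\ast}$ saturating all of $N(D)$ and sending it into $D$. Assume some maximum matching $M$ misses a vertex $u\in N(D)$ and follow the component of $u$ in $M\triangle M^{\ast}$. Since $D$ is independent, every $M$-edge meeting $D$ lands in $N(D)$, while every $M^{\ast}$-edge joins $N(D)$ to $D$; hence the alternating path from $u$ stays inside $N[D]$, reaching $D$-vertices along $M^{\ast}$-edges and leaving them along $M$-edges. As $M^{\ast}$ saturates all of $N(D)$, the path cannot terminate on the $N(D)$ side, so its far endpoint is a $D$-vertex entered by an $M^{\ast}$-edge and left unsaturated by $M$; this makes the path $M$-augmenting, contradicting the maximality of $M$. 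I expect this alternating-path bookkeeping---confirming that the path cannot leave $N[D]$ and cannot stop at an $N(D)$-vertex---to be the main obstacle.

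Finally I would assemble the estimate. The two facts give $N(D)\subseteq P\setminus\mathrm{core}(G)$, whence $|\mathrm{core}(G)\cup P|\ge\xi(G)+|N(D)|$, and substituting into $\varrho_{v}(G)=n(G)-|\mathrm{core}(G)\cup P|$ gives $\varrho_{v}(G)\le n(G)-\xi(G)-|N(D)|=n(G)+d(G)-\xi(G)-\beta(G)$, using $|N(D)|=\beta(G)-d(G)$ for the last equality. The displayed equality of the two closed-form expressions is nothing more than this same relation $|N(\mathrm{diadem}(G))|=\beta(G)-d(G)$, which holds for every graph because $D$ is critical, so it drops out of the critical-set argument rather than needing a separate proof.
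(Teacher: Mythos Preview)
Your argument is correct. The paper does not supply its own proof of Theorem~\ref{th10}; it is quoted from \cite{LevMan2024a}. What you have done is reconstruct precisely the three building blocks that the present paper also imports from that source and later uses when applying Theorem~\ref{th10}: the characterisation that $G-v$ is K\"{o}nig--Egerv\'{a}ry if and only if $v$ is neither $\alpha$-critical nor $\mu$-critical (stated here as Theorem~\ref{th17}); the fact that every vertex of $N_{G}(A)$ is $\mu$-critical whenever there is a matching from $N_{G}(A)$ into $A$ (Theorem~\ref{th11}); and the disjointness $\mathrm{core}(G)\cap N(\mathrm{diadem}(G))=\emptyset$ (Proposition~\ref{prop11}). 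Your alternating-path verification of the second item is the standard one and is watertight: since $D$ is independent every $M$-edge leaving $D$ lands in $N(D)$, every $M^{\ast}$-edge runs from $N(D)$ to $D$, and $M^{\ast}$ saturates $N(D)$, so the component of $u$ in $M\triangle M^{\ast}$ is forced to end at an $M$-unsaturated vertex of $D$, giving the augmenting path. Thus your route coincides with the intended one.

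One small point worth flagging: the second closed form in the displayed statement carries a sign slip. Because $\mathrm{diadem}(G)$ is critical, $|N(\mathrm{diadem}(G))|=\beta(G)-d(G)$, and hence
\[
n(G)+d(G)-\xi(G)-\beta(G)=n(G)-\xi(G)-\left\vert N(\mathrm{diadem}(G))\right\vert,
\]
with a minus rather than the printed plus. The paper itself uses the minus form when it invokes Theorem~\ref{th10} inside the proof of Theorem~\ref{th5}. Your derivation arrives at exactly this corrected expression, so your final sentence about the ``displayed equality'' is right in content even though the printed sign does not match.
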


In this paper, we show that if $G$ is an almost bipartite
non-K\"{o}nig-Egerv\'{a}ry graph with the unique odd cycle $C$, then $\left\{
V(C),N_{G}\left[  \mathrm{diadem}\left(  G\right)  \right]  \right\}  $ forms
a partition for $V\left(  G\right)  $, and%
\begin{gather*}
\varrho_{v}\left(  G\right)  =n\left(  G\right)  +d\left(  G\right)
-\xi\left(  G\right)  -\beta(G)=\\
\left\vert V\left(  C\right)  \right\vert +\left\vert \mathrm{nucleus}\left(
G\right)  \right\vert -\left\vert \mathrm{core}(G)\right\vert =\left\vert
\mathrm{corona}(G)\right\vert -\left\vert \mathrm{diadem}(G)\right\vert .
\end{gather*}

\section{The $\left\{  V(C),N_{G}\left[  \emph{diadem}\left(  G\right)
\right]  \right\}  $ partition of $V(G)$}

\begin{lemma}
\label{lem17}For every graph $G$ and each pair $A,B\subseteq V\left(
G\right)  $ the following equality holds:
\[%
{\displaystyle\sum\limits_{a\in A}}
\left\vert N_{G}\left(  a\right)  \cap B\right\vert =%
{\displaystyle\sum\limits_{b\in B}}
\left\vert N_{G}\left(  b\right)  \cap A\right\vert .
\]

\end{lemma}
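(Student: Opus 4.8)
The plan is to prove the identity by double counting, viewing each side as a count of the incidences of $G$ running between $A$ and $B$. Concretely, I would introduce the indicator $\chi\colon V(G)\times V(G)\to\{0,1\}$ defined by $\chi(x,y)=1$ if $xy\in E(G)$ and $\chi(x,y)=0$ otherwise. Since $G$ is undirected, $\chi$ is symmetric, i.e. $\chi(x,y)=\chi(y,x)$; and since $G$ is loopless, $\chi(x,x)=0$, so no diagonal term interferes even in the case where $A$ and $B$ overlap.

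The first step is to rewrite each cardinality as a sum of indicator values. For a fixed $a\in A$ one has $\left\vert N_{G}(a)\cap B\right\vert =\sum_{b\in B}\chi(a,b)$, because a vertex $b$ contributes to the left-hand count exactly when $b\in B$ and $ab\in E(G)$. Summing over $a\in A$ yields $\sum_{a\in A}\left\vert N_{G}(a)\cap B\right\vert =\sum_{a\in A}\sum_{b\in B}\chi(a,b)$. By the symmetric reasoning, the right-hand side of the claimed equality equals $\sum_{b\in B}\sum_{a\in A}\chi(b,a)$.

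The second step is to identify the two double sums. Using the symmetry $\chi(b,a)=\chi(a,b)$ and then interchanging the order of the two finite summations (Fubini for finite sums), the right-hand double sum becomes $\sum_{a\in A}\sum_{b\in B}\chi(a,b)$, which is precisely the left-hand double sum. Hence the two sides coincide, both being equal to the number of ordered pairs $(a,b)\in A\times B$ with $ab\in E(G)$.

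I do not expect a genuine obstacle here, as the statement is a finite double-counting identity. The only point deserving a moment's care is the possibility that $A\cap B\neq\emptyset$, where one might worry about pairs $(v,v)$ or about an edge with both endpoints in $A\cap B$ being mishandled; the loopless hypothesis kills the diagonal via $\chi(v,v)=0$, and the bookkeeping through ordered pairs resolves the rest automatically, so the identity holds verbatim in full generality.
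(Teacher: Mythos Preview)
Your proof is correct and follows essentially the same double-counting idea as the paper. The paper's version is terser---it simply notes that edges with one endpoint in $A\setminus B$ or $B\setminus A$ contribute once to each side while edges incident with $A\cap B$ contribute twice to each side---whereas your indicator-function formulation makes the bookkeeping fully explicit; but the underlying argument is the same.
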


\begin{proof}
The proof is based on the double counting method. Namely, every edge outgoing
from either $A-B$ or $B-A$ is counted just once, while every edge outgoing
from $A\cap B$ is counted twice in the both sides of the equality.
\end{proof}

\begin{corollary}
\label{cor11}For every graph $G$ and each pair $A,B\subseteq V\left(
G\right)  $, the set $N_{G}\left(  A\right)  \cap B=\emptyset$ if and only if
the set $N_{G}\left(  B\right)  \cap A=\emptyset$.
\end{corollary}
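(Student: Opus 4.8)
The plan is to obtain this as an immediate consequence of Lemma \ref{lem17}, by recognizing each of the two emptiness conditions as the vanishing of one of the two sums appearing in that identity. The key observation to set up first is that, by the definition of the neighborhood of a set, a vertex $b \in B$ belongs to $N_G(A)$ exactly when $b$ has a neighbor in $A$, that is, when $N_G(b) \cap A \neq \emptyset$.

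With that in hand, I would translate the left-hand condition: $N_G(A) \cap B = \emptyset$ says that no $b \in B$ has a neighbor in $A$, i.e. $N_G(b) \cap A = \emptyset$ for every $b \in B$. Because each $\left\vert N_G(b) \cap A\right\vert$ is a nonnegative integer, this is equivalent to $\sum_{b \in B}\left\vert N_G(b) \cap A\right\vert = 0$. Running the same translation with the roles of $A$ and $B$ interchanged shows that $N_G(B) \cap A = \emptyset$ is equivalent to $\sum_{a \in A}\left\vert N_G(a) \cap B\right\vert = 0$. Lemma \ref{lem17} asserts that these two sums are equal, so one is zero precisely when the other is, and chaining the equivalences yields the claim.

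There is essentially no obstacle here; the only step needing a word of justification is the elementary fact that a finite sum of nonnegative integers vanishes if and only if every summand vanishes. The symmetry that makes the claim intuitively obvious is precisely the symmetry already encoded in the double-counting identity of Lemma \ref{lem17}, which is exactly what lets us avoid reasoning about individual edges a second time.
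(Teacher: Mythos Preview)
Your proof is correct and follows essentially the same approach as the paper: both translate each emptiness condition into the vanishing of one of the two sums in Lemma~\ref{lem17} and then invoke the equality of those sums. The only cosmetic difference is that you unpack $N_G(A)\cap B=\emptyset$ via vertices $b\in B$ whereas the paper unpacks it via vertices $a\in A$, but the logic is the same.
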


\begin{proof}
Clearly, $N_{G}\left(  A\right)  \cap B=\emptyset$ if and only if
$N_{G}\left(  a\right)  \cap B=\emptyset$ for every $a\in A$, i.e., $%
{\displaystyle\sum\limits_{a\in A}}
\left\vert N_{G}\left(  a\right)  \cap B\right\vert =0$. Hence, by Lemma
\ref{lem17}, if $N_{G}\left(  A\right)  \cap B=\emptyset$, then
\[%
{\displaystyle\sum\limits_{b\in B}}
\left\vert N_{G}\left(  b\right)  \cap A\right\vert =0\text{, i.e., }%
N_{G}\left(  B\right)  \cap A=\emptyset.
\]
Now, the reasoning based on the symmetry between $A$ and $B$ completes the proof.
\end{proof}

\begin{proposition}
\label{prop14}For every graph $G$, the following equality holds:%
\[
\left\vert N_{G}\left[  \mathrm{diadem}\left(  G\right)  \right]  \right\vert
=\beta(G)+\left\vert \mathrm{nucleus}\left(  G\right)  \right\vert -d\left(
G\right)  .
\]

\end{proposition}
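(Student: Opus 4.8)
The plan is to compute $|N_{G}[\mathrm{diadem}(G)]|$ by inclusion-exclusion, exploiting that $\mathrm{diadem}(G)$ is itself a critical set and that its internal adjacencies are controlled exactly by $\mathrm{nucleus}(G)$. Write $D=\mathrm{diadem}(G)$ and $\mathcal{A}=\mathrm{MaxCritIndep}(G)$, so that $D=\bigcup\mathcal{A}$ by Theorem \ref{th333}(ii), while $\mathrm{nucleus}(G)=\bigcap\mathcal{A}$. Since each $A\in\mathcal{A}$ is independent with $\mathrm{nucleus}(G)\subseteq A$, the set $\mathrm{nucleus}(G)$ is independent. By Theorem \ref{th100}(i) there is a single set $X=N_{G}[A]$ (the same for every $A\in\mathcal{A}$), whence $D\subseteq X$ and $N_{G}[D]=\bigcup_{A\in\mathcal{A}}N_{G}[A]=X$, and $G[X]$ is K\"{o}nig-Egerv\'{a}ry by Theorem \ref{th100}(ii).

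First I would record two quantitative inputs. By Theorem \ref{th444}(ii) a union of critical sets is critical, so $D$ is critical and therefore $|N_{G}(D)|=\beta(G)-d(G)$. Second, I would show that every $A\in\mathcal{A}$ is a \emph{maximum} independent set of $G[X]$: indeed $A$ is independent in $G[X]$, and Theorem \ref{th333}(iii) provides a matching of $N_{G}(A)$ into $A$ lying inside $G[X]$, so that $\mu(G[X])\geq|N_{G}(A)|=|A|-d(G)$; combined with $|X|=|A|+|N_{G}(A)|$ and the K\"{o}nig-Egerv\'{a}ry identity $\alpha(G[X])+\mu(G[X])=|X|$, this forces $\alpha(G[X])=|A|$, i.e. $A\in\Omega(G[X])$.

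The core of the argument is the set identity $D\cap N_{G}(D)=D\setminus\mathrm{nucleus}(G)$. For the inclusion ``$\subseteq$'' I would show that no $u\in\mathrm{nucleus}(G)$ can have a neighbour $w\in D$: choosing $A_{w}\in\mathcal{A}$ with $w\in A_{w}$, we would obtain $u\in\mathrm{nucleus}(G)\subseteq A_{w}$ together with $w\in A_{w}$ and $uw\in E$, contradicting the independence of $A_{w}$. The reverse inclusion ``$\supseteq$'' is the main obstacle, and it is precisely where the K\"{o}nig-Egerv\'{a}ry structure of $G[X]$ is needed: for $u\in D\setminus\mathrm{nucleus}(G)$, pick $A^{\prime}\in\mathcal{A}$ with $u\notin A^{\prime}$; since $u\in D\subseteq X$ and, by the second input, $A^{\prime}$ is a maximum independent set of $G[X]$, the vertex $u$ must have a neighbour in $A^{\prime}$, for otherwise $A^{\prime}\cup\{u\}$ would be a strictly larger independent set of $G[X]$; hence $u\in N_{G}(A^{\prime})\subseteq N_{G}(D)$.

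Finally I would combine these. As $\mathrm{nucleus}(G)\subseteq D$, the identity yields $|D\cap N_{G}(D)|=\beta(G)-|\mathrm{nucleus}(G)|$, so by inclusion-exclusion and $|N_{G}(D)|=\beta(G)-d(G)$,
\[
|N_{G}[D]|=|D|+|N_{G}(D)|-|D\cap N_{G}(D)|=\beta(G)+|\mathrm{nucleus}(G)|-d(G),
\]
which is the asserted equality. The delicate point to double-check is the ``$\supseteq$'' step: it rests on the non-obvious second input that maximum critical independent sets of $G$ are genuine maximum independent sets of the K\"{o}nig-Egerv\'{a}ry graph $G[X]$, so that enlarging $A^{\prime}$ by $u$ is a real contradiction rather than merely producing a larger (non-critical) independent set.
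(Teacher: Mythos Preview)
Your proof is correct and follows essentially the same route as the paper's: both rest on Theorem~\ref{th100}(i), the criticality of $\mathrm{diadem}(G)$ from Theorem~\ref{th444}(ii), and the independence of maximum critical independent sets, and both amount to the decomposition $N_G[D]=N_G(D)\sqcup\mathrm{nucleus}(G)$. Your ``second input'' detour is unnecessary, however: since you already established $N_G[D]=X=A'\cup N_G(A')$ for every $A'\in\mathcal{A}$, the inclusion $D\setminus\mathrm{nucleus}(G)\subseteq N_G(D)$ follows immediately from $u\in X\setminus A'\Rightarrow u\in N_G(A')$, which is exactly how the paper dispatches this step in one line.
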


\begin{proof}
If $v\in N_{G}\left[  \mathrm{diadem}\left(  G\right)  \right]  $, then either
there exists a maximum critical independent set $A$ such that $v\notin A$ or
$v\in%
{\displaystyle\bigcap\limits_{A\in MaxCrit(G)}}
A=\mathrm{nucleus}\left(  G\right)  $. 

If $v\notin A$, then $v\in N_{G}\left(  A\right)  $, since $N_{G}\left[
\mathrm{diadem}\left(  G\right)  \right]  =A\cup N_{G}\left(  A\right)  $ by
Theorem \ref{th100}\emph{(i)}. In other words, if $v\in N_{G}\left[
\mathrm{diadem}\left(  G\right)  \right]  $, then either $v\in N_{G}\left(
\mathrm{diadem}\left(  G\right)  \right)  $ or $v\in\mathrm{nucleus}\left(
G\right)  $, i.e.,
\[
N_{G}\left[  \mathrm{diadem}\left(  G\right)  \right]  =N_{G}\left(
\mathrm{diadem}\left(  G\right)  \right)  \cup\mathrm{nucleus}\left(
G\right)  .
\]
Since all $A\in MaxCrit(G)$ are independent, we know that
\[
N_{G}\left(  \mathrm{diadem}\left(  G\right)  \right)  \cap\mathrm{nucleus}%
\left(  G\right)  =\emptyset.
\]
Thus,
\begin{gather*}
\left\vert N_{G}\left[  \mathrm{diadem}\left(  G\right)  \right]  \right\vert
=\left\vert N_{G}\left(  \mathrm{diadem}\left(  G\right)  \right)  \right\vert
+\left\vert \mathrm{nucleus}\left(  G\right)  \right\vert =\\
\left\vert \mathrm{diadem}\left(  G\right)  \right\vert -d\left(  G\right)
+\left\vert \mathrm{nucleus}\left(  G\right)  \right\vert =\\
\beta(G)-d\left(  G\right)  +\left\vert \mathrm{nucleus}\left(  G\right)
\right\vert ,
\end{gather*}
since, according to its definition and Theorem \ref{th444}\emph{(ii)},
$\mathrm{diadem}\left(  G\right)  $ is critical in $G$.
\end{proof}

\begin{proposition}
\cite{LevMan2019} \label{prop13}$\left\vert \mathrm{diadem}\left(  G\right)
\right\vert +\left\vert \mathrm{nucleus}\left(  G\right)  \right\vert
\leq2\alpha^{\prime}\left(  G\right)  $ for every graph $G$.
\end{proposition}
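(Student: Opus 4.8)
The plan is to obtain the bound (in fact an equality) by feeding the identity already proven in Proposition \ref{prop14} into the canonical König-Egerv\'{a}ry set supplied by Theorem \ref{th100}. By Proposition \ref{prop14},
\[
\beta(G)+\left\vert \mathrm{nucleus}(G)\right\vert =\left\vert N_{G}\left[ \mathrm{diadem}(G)\right] \right\vert +d(G),
\]
so it suffices to prove that $\left\vert N_{G}\left[ \mathrm{diadem}(G)\right] \right\vert \leq 2\alpha^{\prime}(G)-d(G)$. Everything then reduces to confining the closed neighborhood of the diadem inside a single set and evaluating the size of that set.

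First I would fix any $A\in\mathrm{MaxCritIndep}(G)$ and put $X=N_{G}\left[ A\right]$; by Theorem \ref{th100}\emph{(i)} this $X$ is independent of the chosen $A$. Since $A$ is an independent critical set, $d(A)=d(G)$, and because $A$ is independent we have $N_{G}(A)=N_{G}\left[ A\right] \setminus A=X\setminus A$, so that $\left\vert N_{G}(A)\right\vert =\left\vert X\right\vert -\alpha^{\prime}(G)$. Consequently
\[
d(G)=\left\vert A\right\vert -\left\vert N_{G}(A)\right\vert =\alpha^{\prime}(G)-\left( \left\vert X\right\vert -\alpha^{\prime}(G)\right) =2\alpha^{\prime}(G)-\left\vert X\right\vert ,
\]
that is, $\left\vert X\right\vert =2\alpha^{\prime}(G)-d(G)$.

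Next I would show $N_{G}\left[ \mathrm{diadem}(G)\right] \subseteq X$. By Theorem \ref{th333}\emph{(ii)} every critical independent set is contained in some maximum critical independent set, hence $\mathrm{diadem}(G)=\bigcup\{A:A\in\mathrm{MaxCritIndep}(G)\}$. Each such $A$ satisfies $A\subseteq N_{G}\left[ A\right] =X$, so $\mathrm{diadem}(G)\subseteq X$, and moreover $N_{G}(\mathrm{diadem}(G))=\bigcup_{A}N_{G}(A)\subseteq\bigcup_{A}N_{G}\left[ A\right] =X$. Therefore $N_{G}\left[ \mathrm{diadem}(G)\right] \subseteq X$, giving $\left\vert N_{G}\left[ \mathrm{diadem}(G)\right] \right\vert \leq\left\vert X\right\vert =2\alpha^{\prime}(G)-d(G)$, and combining this with the displayed identity yields the claim.

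The main obstacle, and really the crux of the whole argument, is the passage $N_{G}\left[ \mathrm{diadem}(G)\right] \subseteq X$: it rests entirely on the fact, guaranteed by Theorem \ref{th100}\emph{(i)}, that \emph{all} maximum critical independent sets share one and the same closed neighborhood $X$. Without this uniqueness the various $N_{G}\left[ A\right]$ could differ and the diadem's closed neighborhood need not be trapped in a single set of size $2\alpha^{\prime}(G)-d(G)$; indeed, for an arbitrary family of $\alpha^{\prime}(G)$-sets the analogous inequality is false. Once Theorem \ref{th100} is invoked the remaining work is pure bookkeeping with differences, and the computation in fact forces $N_{G}[\mathrm{diadem}(G)]=X$ and hence the equality $\beta(G)+\left\vert \mathrm{nucleus}(G)\right\vert =2\alpha^{\prime}(G)$; discarding the surplus gives the stated inequality.
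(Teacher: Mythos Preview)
The paper does not prove this proposition; it is quoted from \cite{LevMan2019} without argument, so there is no in-paper proof to compare against. Your argument is correct: for any $A\in\mathrm{MaxCritIndep}(G)$ the independence of $A$ gives $A\cap N_G(A)=\emptyset$, hence $|N_G[A]|=|A|+|N_G(A)|=2\alpha'(G)-d(G)$, and Theorem~\ref{th100}\emph{(i)} together with Theorem~\ref{th333}\emph{(ii)} forces $N_G[\mathrm{diadem}(G)]=N_G[A]$. Substituting into Proposition~\ref{prop14} yields the claim, indeed with equality.

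It is worth noting that your route runs opposite to the paper's intended logical flow: the paper combines Proposition~\ref{prop13} (imported) with Proposition~\ref{prop14} to obtain the subsequent Corollary $|N_G[\mathrm{diadem}(G)]|\leq 2\alpha'(G)-d(G)$, whereas you establish that corollary directly (as an equality) and then read off Proposition~\ref{prop13}. Both directions are valid once Theorem~\ref{th100} and Proposition~\ref{prop14} are in hand, and your observation that one actually has $\beta(G)+|\mathrm{nucleus}(G)|=2\alpha'(G)$ is a genuine sharpening of the stated inequality.
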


Proposition \ref{prop14} and Proposition \ref{prop13} immediately imply the following.

\begin{corollary}
$\left\vert N_{G}\left[  \mathrm{diadem}\left(  G\right)  \right]  \right\vert
\leq2\alpha^{\prime}\left(  G\right)  -d\left(  G\right)  $ for every graph
$G$.
\end{corollary}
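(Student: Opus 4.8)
The plan is to combine the two immediately preceding results by straight substitution, since one is an exact identity and the other an upper bound on the very quantity appearing in that identity. First I would invoke Proposition \ref{prop14}, which furnishes the exact value
\[
\left\vert N_{G}\left[  \mathrm{diadem}\left(  G\right)  \right]  \right\vert
=\beta(G)+\left\vert \mathrm{nucleus}\left(  G\right)  \right\vert -d\left(
G\right)  .
\]
Recalling that $\beta(G)=\left\vert \mathrm{diadem}\left(  G\right)
\right\vert $ by definition, this identity reads
\[
\left\vert N_{G}\left[  \mathrm{diadem}\left(  G\right)  \right]  \right\vert
=\left\vert \mathrm{diadem}\left(  G\right)  \right\vert +\left\vert
\mathrm{nucleus}\left(  G\right)  \right\vert -d\left(  G\right)  .
\]

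Next I would apply Proposition \ref{prop13}, which bounds the sum of the first two terms on the right: $\left\vert \mathrm{diadem}\left(  G\right)  \right\vert +\left\vert \mathrm{nucleus}\left(  G\right)  \right\vert \leq2\alpha^{\prime}\left(  G\right)  $. Substituting this bound into the displayed identity (and leaving the $-d(G)$ term untouched) yields
\[
\left\vert N_{G}\left[  \mathrm{diadem}\left(  G\right)  \right]  \right\vert
\leq2\alpha^{\prime}\left(  G\right)  -d\left(  G\right)  ,
\]
which is precisely the assertion. Since both inputs hold for an arbitrary graph $G$, the conclusion inherits the same generality with no additional hypotheses.

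I do not anticipate any genuine obstacle here: the argument is a one-line substitution of an inequality into an equality, and all the work has already been done in establishing Proposition \ref{prop14} (the double-counting identity for the closed neighborhood of the diadem) and Proposition \ref{prop13} (the cited bound from \cite{LevMan2019}). The only point deserving a word of care is the bookkeeping identification $\beta(G)=\left\vert \mathrm{diadem}\left(  G\right)  \right\vert $, so that the quantity bounded in Proposition \ref{prop13} matches the quantity appearing in Proposition \ref{prop14}; once that is noted, the chain of (in)equalities closes immediately.
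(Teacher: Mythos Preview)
Your proposal is correct and matches the paper's approach exactly: the paper simply states that the corollary follows immediately from Proposition~\ref{prop14} and Proposition~\ref{prop13}, which is precisely the substitution you carry out.
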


Consider the graphs in Figure \ref{fig121212}: $\mathrm{core}\left(
G_{1}\right)  =\left\{  a,b\right\}  $ and $c\in N\left(  \mathrm{core}\left(
G_{1}\right)  \right)  $ is $\mu$-critical, $\mathrm{core}\left(
G_{2}\right)  =\emptyset$ and $x$ is $\mu$-critical\textit{.}\begin{figure}[h]
\setlength{\unitlength}{1cm}\begin{picture}(5,1.2)\thicklines
\multiput(3,0)(1,0){4}{\circle*{0.29}}
\multiput(3,1)(1,0){4}{\circle*{0.29}}
\put(3,0){\line(1,0){3}}
\put(3,0){\line(0,1){1}}
\put(3,1){\line(1,0){1}}
\put(4,1){\line(1,-1){1}}
\put(5,1){\line(1,-1){1}}
\put(6,0){\line(0,1){1}}
\put(4.65,1){\makebox(0,0){$a$}}
\put(6.3,1){\makebox(0,0){$b$}}
\put(6.3,0){\makebox(0,0){$c$}}
\put(2.2,0.5){\makebox(0,0){$G_{1}$}}
\multiput(8,0)(1,0){4}{\circle*{0.29}}
\multiput(9,1)(1,0){3}{\circle*{0.29}}
\put(8,0){\line(1,0){3}}
\put(8,0){\line(1,1){1}}
\put(9,1){\line(1,0){1}}
\put(10,0){\line(0,1){1}}
\put(11,0){\line(0,1){1}}
\put(10.65,0.3){\makebox(0,0){$x$}}
\put(7.2,0.5){\makebox(0,0){$G_{2}$}}
\end{picture}\caption{Almost bipartite non-K\"{o}nig-Egerv\'{a}ry graphs.}%
\label{fig121212}%
\end{figure}
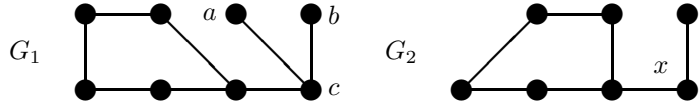

Notice that, by Lemma \ref{lem84}, a unicyclic graph $G$ may be either
K\"{o}nig-Egerv\'{a}ry or $1$-K\"{o}nig-Egerv\'{a}ry. Clearly, $\left\vert
V\left(  C\right)  \right\vert \leq\varrho_{v}\left(  G\right)  \leq n\left(
G\right)  $, where $C$ is the unique cycle of $G$. If $G$ is a
K\"{o}nig-Egerv\'{a}ry graph, then $\varrho_{v}\left(  G\right)  =n\left(
G\right)  -\xi\left(  G\right)  +\varepsilon\left(  G\right)  $, according to
Theorem \ref{th9}\emph{(iv)}. This formula is true for some unicyclic
non-K\"{o}nig-Egerv\'{a}ry graphs (e.g., $G=C_{2k+1}$). On the other hand, the
formula is not true for the graphs $G_{1}$ and $G_{2}$ from Figure
\ref{fig121212}. They are unicyclic non-K\"{o}nig-Egerv\'{a}ry graphs and have
$\varrho_{v}\left(  G_{1}\right)  =5=\left\vert V\left(  G_{1}\right)
-\left\{  a,b,c\right\}  \right\vert $ and $\varrho_{v}\left(  G_{2}\right)
=6=\left\vert V\left(  G_{2}\right)  -\left\{  x\right\}  \right\vert $. In
general, if $G$ is a unicyclic \ non-K\"{o}nig-Egerv\'{a}ry graph, then
$\xi\left(  G\right)  =\varepsilon\left(  G\right)  $ by Theorem \ref{th2222},
i.e., $\varrho_{v}\left(  G\right)  =n\left(  G\right)  $.

\begin{lemma}
\cite{LevMan2022}\label{lem7} Let $G$ be an almost bipartite
non-K\"{o}nig-Egerv\'{a}ry graph with the unique odd cycle $C$. Then the
following assertions are true:

\emph{(i)} if $A$ is a critical independent set, then $A\cap V(C)=$
$\emptyset$;

\emph{(ii)} $\mathrm{core}\left(  G\right)  $ is a critical set.
\end{lemma}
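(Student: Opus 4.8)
The plan is to dispose of (ii) quickly from the already-cited results and to spend the effort on (i), which I would prove by contradiction. For (ii): the set $\mathrm{core}(G)=\bigcap\{S:S\in\Omega(G)\}$ is an intersection of independent sets, hence independent, and Theorem \ref{th44}(ii) gives $d(\mathrm{core}(G))=\left\vert \mathrm{core}(G)\right\vert-\left\vert N(\mathrm{core}(G))\right\vert=d(G)$. An independent set whose difference equals the critical difference $d(G)$ is by definition a critical set, so $\mathrm{core}(G)$ is critical. (Equivalently, Theorem \ref{th2222} identifies $\mathrm{core}(G)$ with $\ker(G)$, which is critical by Theorem \ref{th444}(ii)--(iii).)

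For (i), suppose for contradiction that a critical independent set $A$ meets $V(C)$, and fix $w\in A\cap V(C)$. The first step is to show that $H:=G[N_G[A]]$ is K\"{o}nig-Egerv\'{a}ry with $A$ a maximum independent set. Since $A$ is independent, every edge of $H$ has an endpoint in $N_G(A)$, so $N_G(A)$ is a vertex cover of $H$ and $\mu(H)\le\left\vert N_G(A)\right\vert$; on the other hand Theorem \ref{th333}(iii) supplies a matching saturating $N_G(A)$ into $A$, of size $\left\vert N_G(A)\right\vert$, whence $\mu(H)=\left\vert N_G(A)\right\vert$. The general bound $\alpha(H)+\mu(H)\le\left\vert V(H)\right\vert$ then forces $\alpha(H)\le\left\vert A\right\vert$, while the independent set $A$ gives $\alpha(H)\ge\left\vert A\right\vert$; hence $\alpha(H)=\left\vert A\right\vert$ and $\alpha(H)+\mu(H)=\left\vert A\right\vert+\left\vert N_G(A)\right\vert=\left\vert V(H)\right\vert$, so $H$ is K\"{o}nig-Egerv\'{a}ry.

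The second step glues $H$ to the rest of the graph. Put $W=V(G)\setminus N_G[A]$; by definition $A$ has no neighbour in $W$. Because $w\in N_G[A]$, the odd cycle $C$ is not contained in $W$, so $G[W]$ has no odd cycle and is therefore bipartite, hence K\"{o}nig-Egerv\'{a}ry; let $S_W$ be a maximum independent set of $G[W]$, let $A_W=W\setminus S_W$ be the complementary minimum vertex cover matched into $S_W$, with $\left\vert S_W\right\vert\ge\left\vert A_W\right\vert$. Setting $S^{\ast}=A\cup S_W$ and $A^{\ast}=N_G(A)\cup A_W=V(G)\setminus S^{\ast}$, I claim $G=S^{\ast}\ast A^{\ast}$ meets the hypotheses of Theorem \ref{th715}(ii): $S^{\ast}$ is independent (there are no edges between $A$ and $S_W$, as $A$ avoids $W$); $\left\vert S^{\ast}\right\vert\ge\left\vert A^{\ast}\right\vert$, using $\left\vert A\right\vert\ge\left\vert N_G(A)\right\vert$ (because $d(A)=d(G)\ge0$) together with $\left\vert S_W\right\vert\ge\left\vert A_W\right\vert$; and the disjoint union of the two matchings saturates $A^{\ast}$ into $S^{\ast}$ with exactly $\left\vert A^{\ast}\right\vert$ edges. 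Theorem \ref{th715}(ii) then declares $G$ K\"{o}nig-Egerv\'{a}ry, contradicting the hypothesis, so $A\cap V(C)=\emptyset$.

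The step I expect to be the crux is the gluing: one must guarantee at once that $S^{\ast}$ remains independent, that the inequality $\left\vert S^{\ast}\right\vert\ge\left\vert A^{\ast}\right\vert$ is preserved, and that the two matchings merge into a single matching of the correct size so that Theorem \ref{th715}(ii) applies. What makes it go through cleanly is the observation that $A$ has all its neighbours inside $N_G[A]$ and therefore cannot see $W$, and that the lone assumption $A\cap V(C)\neq\emptyset$ already forces $G[W]$ to be bipartite, so that no case analysis on how much of $C$ lies in $N_G[A]$ is required.
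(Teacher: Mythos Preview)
The paper does not prove Lemma~\ref{lem7}; it is quoted from \cite{LevMan2022} without argument, so there is no in-paper proof to compare against. Your proposal is nonetheless correct. For (ii), invoking Theorem~\ref{th44}\emph{(ii)} is logically sufficient within this paper, though since that item is drawn from the same source \cite{LevMan2022} your alternative route via Theorem~\ref{th2222} and Theorem~\ref{th444}\emph{(iii)} is the safer one against circularity. For (i), the gluing argument goes through cleanly: the three facts you isolate---that $A$ is independent so $A\cap N_G(A)=\emptyset$, that $W=V(G)\setminus N_G[A]$ contains no neighbour of $A$, and that a single vertex $w\in A\cap V(C)$ already forces $G[W]$ to miss the unique odd cycle and hence be bipartite---are exactly what let the K\"{o}nig-Egerv\'{a}ry decompositions of $G[N_G[A]]$ and $G[W]$ merge into one for $G$ via Theorem~\ref{th715}\emph{(ii)}, yielding the contradiction. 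One minor point worth making explicit: that a maximum matching of the K\"{o}nig-Egerv\'{a}ry graph $G[W]$ saturates $A_W=W\setminus S_W$ entirely into $S_W$ follows because $|A_W|=\mu(G[W])$ and $S_W$ is independent, so each matching edge uses precisely one vertex of $A_W$.
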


\begin{theorem}
\cite{LevMan2024a}\label{th11} Suppose that $A\in\mathrm{Ind}(G)$. If there is
a matching from $N_{G}(A)$ into $A$, then\textit{ every matching }from
$N_{G}(A)$ into $A$ can be enlarged to a maximum matching of $G$, and every
vertex of $N_{G}(A)$ is $\mu$-\textit{critical.}
\end{theorem}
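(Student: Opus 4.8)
The plan is to prove both assertions by comparing the given matching with a maximum matching via their symmetric difference, exploiting the bipartite-like structure forced by $A\in\mathrm{Ind}(G)$. Write $N=N_G(A)$ and let $M_1$ be a matching from $N$ into $A$. Three facts will drive everything: every vertex of $N$ is $M_1$-saturated; every edge of $M_1$ joins $N$ to $A$; and, since $A$ is independent, $N\cap A=\emptyset$ while every edge of $G$ incident to $A$ has its other endpoint in $N$. The upshot of the last fact is that any alternating walk that leaves an $A$-vertex must immediately re-enter $N$, so walks of the right type stay confined to $A\cup N$.

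First I would prove that every vertex of $N$ is $\mu$-critical. Let $M^*$ be an arbitrary maximum matching and suppose, for contradiction, that some $u\in N$ is $M^*$-unsaturated. In $D=M_1\triangle M^*$ the vertex $u$ has degree $1$ (only its $M_1$-edge), so it is an endpoint of a path component $P$. Tracing $P$ out of $u$, the structural facts force the vertices to alternate strictly $N,A,N,A,\dots$ and the edges to alternate $M_1,M^*,M_1,\dots$; moreover every $N$-vertex met after $u$ is entered along an $M^*$-edge yet also carries its $M_1$-edge, so it has degree $2$ in $D$ and cannot be an endpoint. Hence $P$ must terminate at a vertex of $A$ reached along an $M_1$-edge, which is therefore $M^*$-unsaturated. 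Thus $P$ is an $M^*$-augmenting path, contradicting the maximality of $M^*$. Since $M^*$ was an arbitrary maximum matching, every vertex of $N$ is saturated by every maximum matching, i.e.\ $\mu$-critical.

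For the extension statement, let $M$ be any matching from $N$ into $A$ and choose a maximum matching $M^*$ maximising $\lvert M\cap M^*\rvert$. Assume $M\not\subseteq M^*$ and pick $e=na\in M\setminus M^*$ with $n\in N$, $a\in A$; let $P$ be the component of $e$ in $D=M\triangle M^*$. By the first part $M^*$ also saturates all of $N$, so every $N$-vertex has even degree in $D$ and cannot be a path-endpoint. Tracing $P$ away from $e$ through its $A$-endpoint $a$, the strict $A$–$N$ alternation (confined to $A\cup N$) shows this branch ends at an $M^*$-unsaturated vertex of $A$. Consequently $P$ is either an even alternating cycle or an even alternating path: an odd path carrying one extra $M$-edge would be $M^*$-augmenting, and an odd path carrying one extra $M^*$-edge is excluded because it would require both endpoints to be $M^*$-saturated. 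In either surviving case, replacing $M^*$ by $M^*\triangle P$ preserves the matching size while adding $e$ (and all other $M$-edges of $P$) without deleting any edge of $M\cap M^*$, so $\lvert M\cap(M^*\triangle P)\rvert>\lvert M\cap M^*\rvert$, contradicting the choice of $M^*$. Hence $M\subseteq M^*$, and $M$ extends to a maximum matching.

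The main obstacle I anticipate is precisely the case analysis on the component $P$ in the extension part: the only genuinely dangerous shape is an odd path with one more $M^*$-edge than $M$-edge, where a naive swap would shrink the matching. The point of confining the alternation to $A\cup N$ is that it pins down the type of the endpoint reached from $e$—always $M^*$-unsaturated and $M$-saturated—which rules out that bad shape and lets the exchange argument close. Everything else reduces to the standard facts that path/cycle components of a symmetric difference have $M$- and $M^*$-edge counts differing by at most one and that a maximum matching admits no augmenting path.
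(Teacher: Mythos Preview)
The present paper does not give its own proof of this theorem: it is quoted verbatim from \cite{LevMan2024a} and used as a black box. So there is nothing in this paper to compare your argument against.

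That said, your alternating-path proof is correct. For Part~1 the key observation---that along the component of $u$ in $M_1\triangle M^*$ every $N$-vertex after $u$ is entered by an $M^*$-edge and hence has degree $2$---pins the terminal vertex in $A$ and makes the path $M^*$-augmenting; this is exactly the standard argument. For Part~2 the decisive point is the one you isolate: tracing from the $A$-end $a$ of $e$ the walk is confined to $A\cup N$ (because edges leaving $A$ land in $N$ and $M$-edges leaving $N$ land in $A$), so if $P$ is a path then the endpoint on that side is an $M^*$-unsaturated vertex of $A$. This single observation kills the only dangerous parity (an odd path with one extra $M^*$-edge would need both endpoints $M^*$-saturated), leaving either an even cycle or an even path, along which the swap $M^*\mapsto M^*\triangle P$ preserves maximality while strictly increasing $\lvert M\cap M^*\rvert$. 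The exchange bookkeeping is clean because $P\subseteq M\triangle M^*$ is disjoint from $M\cap M^*$.

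Two small clarifications you might add when writing this up: (i) in Part~2 you are using that $M$ saturates all of $N$ (that is what ``matching from $N$ into $A$'' means here), so that together with Part~1 every $N$-vertex has degree $0$ or $2$ in $M\triangle M^*$; and (ii) when you say the $a$-branch ``ends at an $M^*$-unsaturated vertex of $A$'', allow explicitly for the possibility $a^*=a$ (which occurs exactly when $a$ itself is $M^*$-unsaturated) and for the branch closing up into a cycle through $n$. Both are already implicit in your case split, but making them explicit removes any ambiguity.
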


\begin{theorem}
\cite{LevMan2024a}\label{th17} Let $G$ be a $1$-K\"{o}nig-Egerv\'{a}ry graph.
Then $G-v$ is K\"{o}nig-Egerv\'{a}ry if and only if the vertex $v$ is neither
$\alpha$-\textit{critical nor }$\mu$-\textit{critical.}
\end{theorem}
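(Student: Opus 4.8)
The plan is to reduce the equivalence to pure arithmetic built on the defining relation $\alpha(G)+\mu(G)=n(G)-1$ of a $1$-K\"{o}nig-Egerv\'{a}ry graph, combined with the standard bounds describing how $\alpha$ and $\mu$ change under deletion of a single vertex. Write $n=n(G)$, $\alpha=\alpha(G)$, $\mu=\mu(G)$, so that $\alpha+\mu=n-1$ and $n(G-v)=n-1$ for every $v\in V(G)$.

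First I would record the two deletion bounds, valid for any graph and any vertex $v$. For independence, an independent set of $G-v$ is independent in $G$, so $\alpha(G-v)\le\alpha$; deleting $v$ from a maximum independent set gives $\alpha(G-v)\ge\alpha-1$. Thus $\alpha(G-v)=\alpha$ exactly when $v$ is not $\alpha$-critical, and $\alpha(G-v)=\alpha-1$ otherwise. Symmetrically, a matching of $G-v$ is a matching of $G$, so $\mu(G-v)\le\mu$; removing the edge saturating $v$ from a maximum matching gives $\mu(G-v)\ge\mu-1$. By the definition recalled in the introduction, $v$ is $\mu$-critical precisely when it is saturated by every maximum matching, i.e.\ exactly when $\mu(G-v)=\mu-1$; otherwise $\mu(G-v)=\mu$. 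I would also invoke the universal inequality $\alpha(H)+\mu(H)\le n(H)$ applied to $H=G-v$, which gives $\alpha(G-v)+\mu(G-v)\le n-1$.

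For the reverse implication, suppose $v$ is neither $\alpha$-critical nor $\mu$-critical. Then $\alpha(G-v)=\alpha$ and $\mu(G-v)=\mu$, whence $\alpha(G-v)+\mu(G-v)=\alpha+\mu=n-1=n(G-v)$, which is precisely the K\"{o}nig-Egerv\'{a}ry condition for $G-v$. For the forward implication, suppose $G-v$ is K\"{o}nig-Egerv\'{a}ry, so that $\alpha(G-v)+\mu(G-v)=n-1=\alpha+\mu$. Since $\alpha(G-v)\le\alpha$ and $\mu(G-v)\le\mu$, the sum can attain $\alpha+\mu$ only if both bounds are tight, forcing $\alpha(G-v)=\alpha$ and $\mu(G-v)=\mu$; hence $v$ is neither $\alpha$-critical nor $\mu$-critical. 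Equivalently, if $v$ were $\alpha$-critical or $\mu$-critical, the sum would drop to at most $n-2$, contradicting the assumption.

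The only real subtlety is establishing the deletion bounds together with the exact characterizations of $\alpha$- and $\mu$-criticality as the respective tightness conditions; once these are in hand, the equivalence is forced by the arithmetic of $\alpha+\mu=n-1$, so I expect no genuine obstacle. I would only be careful that it is the hypothesis $\kappa(G)=1$ (rather than $\kappa(G)=0$) that makes each failure mode push the K\"{o}nig deficiency of $G-v$ below zero-deficiency: deleting a non-critical vertex lowers $n$ by $1$ while preserving $\alpha$ and $\mu$, exactly cancelling the original unit deficiency.
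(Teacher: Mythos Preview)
Your argument is correct. The paper does not supply its own proof of this statement: Theorem~\ref{th17} is quoted from \cite{LevMan2024a} and used as a black box, so there is no in-paper proof to compare against. What you wrote is the natural self-contained argument---reduce everything to the two-valued behaviour of $\alpha(G-v)\in\{\alpha-1,\alpha\}$ and $\mu(G-v)\in\{\mu-1,\mu\}$ and then observe that the K\"{o}nig--Egerv\'{a}ry identity $\alpha(G-v)+\mu(G-v)=n-1=\alpha+\mu$ forces both upper bounds to be tight. Nothing is missing; the only place one might add a word is in the equivalence ``$\mu(G-v)=\mu$ iff $v$ is not $\mu$-critical'', where you correctly point to the existence of a maximum matching missing $v$ as the certificate.
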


\begin{proposition}
\label{prop15}If $G$ is an almost bipartite non-K\"{o}nig-Egerv\'{a}ry graph
with the unique odd cycle $C$, then
\[
V(C)\cap N_{G}\left[  \mathrm{diadem}\left(  G\right)  \right]  =\emptyset
\text{ or, equivalently, }N_{G}\left[  V(C)\right]  \cap\mathrm{diadem}\left(
G\right)  =\emptyset\mathbf{.}%
\]

\end{proposition}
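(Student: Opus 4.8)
The plan is to split the statement into the two set-disjointness facts $\mathrm{diadem}(G)\cap V(C)=\emptyset$ and $N_{G}(\mathrm{diadem}(G))\cap V(C)=\emptyset$, since together these give
\[
V(C)\cap N_{G}[\mathrm{diadem}(G)]=\bigl(V(C)\cap\mathrm{diadem}(G)\bigr)\cup\bigl(V(C)\cap N_{G}(\mathrm{diadem}(G))\bigr)=\emptyset .
\]
The equivalence with $N_{G}[V(C)]\cap\mathrm{diadem}(G)=\emptyset$ I would then simply read off from Corollary \ref{cor11} applied to $A=V(C)$ and $B=\mathrm{diadem}(G)$: that corollary identifies the condition $N_{G}(V(C))\cap\mathrm{diadem}(G)=\emptyset$ with $N_{G}(\mathrm{diadem}(G))\cap V(C)=\emptyset$, while the term $\mathrm{diadem}(G)\cap V(C)=\emptyset$ is common to both formulations.

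The first disjointness is immediate. By Theorem \ref{th444}\emph{(ii)} the union of critical independent sets is again a critical independent set, so $\mathrm{diadem}(G)$ is itself a critical independent set (this is precisely the fact already used in the proof of Proposition \ref{prop14}). Lemma \ref{lem7}\emph{(i)} then yields $\mathrm{diadem}(G)\cap V(C)=\emptyset$.

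For the second disjointness my plan is to separate $N_{G}(\mathrm{diadem}(G))$ from $V(C)$ using the property of being $\mu$-critical. Since $\mathrm{diadem}(G)$ is a critical independent set, Theorem \ref{th333}\emph{(iii)} supplies a matching from $N_{G}(\mathrm{diadem}(G))$ into $\mathrm{diadem}(G)$, so Theorem \ref{th11} shows that \emph{every} vertex of $N_{G}(\mathrm{diadem}(G))$ is $\mu$-critical. I then claim that \emph{no} vertex of $V(C)$ is $\mu$-critical. Indeed, for any $v\in V(C)$ the graph $G-v$ has no odd cycle, because $C$ is the unique odd cycle of $G$ and it is destroyed by deleting $v$; hence $G-v$ is bipartite and therefore K\"{o}nig-Egerv\'{a}ry. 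Moreover, as $G$ is almost bipartite and non-K\"{o}nig-Egerv\'{a}ry, Lemma \ref{lem84} forces $\alpha(G)+\mu(G)=n(G)-1$, i.e.\ $G$ is a $1$-K\"{o}nig-Egerv\'{a}ry graph, so Theorem \ref{th17} applies: since $G-v$ is K\"{o}nig-Egerv\'{a}ry, the vertex $v$ is neither $\alpha$-critical nor $\mu$-critical. Comparing the two conclusions shows $N_{G}(\mathrm{diadem}(G))\cap V(C)=\emptyset$.

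I expect the only delicate point to be securing the hypotheses of the "$\mu$-critical dichotomy": the whole argument pivots on the observation that deleting a single cycle vertex bipartizes $G$ (hence makes $G-v$ K\"{o}nig-Egerv\'{a}ry), and on the fact that $G$ is genuinely $1$-K\"{o}nig-Egerv\'{a}ry so that Theorem \ref{th17} is available; the latter is exactly what Lemma \ref{lem84} provides. Everything else — that $\mathrm{diadem}(G)$ is a critical independent set and the passage between the two equivalent formulations via Corollary \ref{cor11} — is routine.
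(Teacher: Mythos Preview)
Your proof is correct and follows essentially the same approach as the paper: the key dichotomy is that vertices of $N_{G}(A)$ for a critical independent set $A$ are $\mu$-critical (Theorem \ref{th11}, with the matching hypothesis supplied by Theorem \ref{th333}\emph{(iii)}), whereas vertices of $V(C)$ are not (Lemma \ref{lem84} and Theorem \ref{th17}), together with Lemma \ref{lem7}\emph{(i)} and Corollary \ref{cor11}. The only cosmetic difference is that the paper carries out the argument for an arbitrary critical independent set $A$ and then takes the union over all such $A$, while you first observe that $\mathrm{diadem}(G)$ is itself critical and work with it directly.
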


\begin{proof}
Let $A$ be a critical independent set. Theorem \ref{th11} claims that every
vertex of $N_{G}(A)$ is $\mu$-critical.\textit{ }On the other hand, $G-v$ is
bipartite for every $v\in V\left(  C\right)  $, since $C$ is the only odd
cycle in $G$. Hence, by Lemma \ref{lem84} and Theorem \ref{th17} \ no vertex
from $V\left(  C\right)  $ is $\mu$-critical. Thus, $V(C)\cap N_{G}\left(
A\right)  =\emptyset$. On the other hand, by Lemma \ref{lem7}, we know that
$V(C)\cap A=$ $\emptyset$. Thus, $V(C)\cap N_{G}\left[  A\right]  =\emptyset$.
Finally,%
\begin{align*}
V(C)\cap N_{G}\left[  \mathrm{diadem}\left(  G\right)  \right]   &  =V(C)\cap%
{\displaystyle\bigcup\limits_{A\in Crit(G)}}
N_{G}\left[  A\right]  =\\%
{\displaystyle\bigcup\limits_{A\in Crit(G)}}
\left(  V(C)\cap N_{G}\left[  A\right]  \right)   &  =\emptyset,
\end{align*}
as required. Hence, $N_{G}\left[  V(C)\right]  \cap\mathrm{diadem}\left(
G\right)  =\emptyset$, by Corollary \ref{cor11}.
\end{proof}

For an almost bipartite graph $G$ with $C$ as its unique odd cycle, let
\[
N_{1}(C)=\{v:v\in V\left(  G\right)  -V(C),N(v)\cap V(C)\neq\emptyset\},
\]
and $D_{y}=(V_{y},E_{y})$ be the connected bipartite subgraph of $G-E(C)$
containing $y$, for every $y\in V(C)$ (see Figure \ref{fig44} for an example).
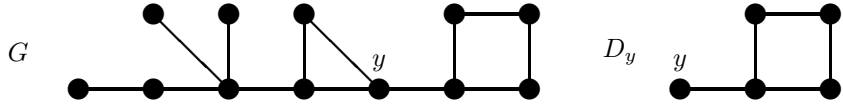
\begin{figure}[h]
\setlength{\unitlength}{1cm}\begin{picture}(5,1.2)\thicklines
\multiput(2,0)(1,0){7}{\circle*{0.29}}
\multiput(3,1)(1,0){3}{\circle*{0.29}}
\multiput(7,1)(1,0){2}{\circle*{0.29}}
\put(2,0){\line(1,0){6}}
\put(3,1){\line(1,-1){1}}
\put(4,0){\line(0,1){1}}
\put(5,1){\line(1,-1){1}}
\put(5,0){\line(0,1){1}}
\put(7,0){\line(0,1){1}}
\put(7,1){\line(1,0){1}}
\put(8,0){\line(0,1){1}}
\put(6,0.35){\makebox(0,0){$y$}}
\put(1.2,0.5){\makebox(0,0){$G$}}
\multiput(10,0)(1,0){3}{\circle*{0.29}}
\multiput(11,1)(1,0){2}{\circle*{0.29}}
\put(10,0){\line(1,0){2}}
\put(11,0){\line(0,1){1}}
\put(11,1){\line(1,0){1}}
\put(12,0){\line(0,1){1}}
\put(10,0.35){\makebox(0,0){$y$}}
\put(9.2,0.5){\makebox(0,0){$D_{y}$}}
\end{picture}\caption{An almost bipartite non-K\"{o}nig-Egerv\'{a}ry graph and
its $D_{y}$.}%
\label{fig44}%
\end{figure}

\begin{lemma}
\label{lem10}If $G$ is an almost bipartite non-K\"{o}nig-Egerv\'{a}ry graph
with the unique odd cycle $C$, then $N_{G}\left[  V\left(  D_{y}-y\right)
\right]  \cap N_{G}\left[  V\left(  D_{x}-x\right)  \right]  =\emptyset$ for
every pair of different vertices $x,y\in V(C)$.
\end{lemma}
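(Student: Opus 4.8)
The plan is to reduce the statement to a single structural fact: distinct vertices of $C$ lie in distinct components of $G-E(C)$, so that $D_x$ and $D_y$ are different components whenever $x \neq y$. Granting this, the conclusion follows quickly, and the only real work is the parity argument behind the fact itself.

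First I would prove that no two distinct cycle vertices share a component of $G-E(C)$. Suppose otherwise and, among all pairs of distinct cycle vertices lying in a common component, choose $x,y$ at minimum distance in $G-E(C)$; let $P$ be a shortest $x$--$y$ path there. By minimality, no internal vertex of $P$ lies on $C$. The cycle $C$ supplies two arcs $Q_1,Q_2$ between $x$ and $y$ with $|Q_1|+|Q_2|=|E(C)|$ odd, so $Q_1$ and $Q_2$ have opposite parity. Since $P$ uses no edge of $C$ and no internal vertex of $C$, it meets each $Q_i$ only in $\{x,y\}$, so $P\cup Q_1$ and $P\cup Q_2$ are cycles whose lengths $|P|+|Q_1|$ and $|P|+|Q_2|$ have opposite parity. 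Hence exactly one of them is odd; as $x\neq y$ forces $P$ to contain an edge, and no edge of $P$ lies in $E(C)$, this odd cycle differs from $C$, contradicting uniqueness. Therefore $D_x$ and $D_y$ are distinct components and $V_x\cap V_y=\emptyset$.

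Next I would note that $D_y$ contains exactly one vertex of $C$, namely $y$: a second cycle vertex in $V_y$ would be joined to $y$ inside $G-E(C)$, contrary to the previous paragraph. Thus every $w\in V(D_y-y)=V_y-\{y\}$ is off the cycle. The edges of $G$ absent from $G-E(C)$ are exactly those of $C$, and these are incident only to cycle vertices; so the off-cycle vertex $w$ satisfies $N_G(w)=N_{G-E(C)}(w)\subseteq V_y$. It follows that $N_G[V(D_y-y)]\subseteq V_y$, and symmetrically $N_G[V(D_x-x)]\subseteq V_x$.

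Finally, combining these inclusions with $V_x\cap V_y=\emptyset$ yields $N_G[V(D_y-y)]\cap N_G[V(D_x-x)]\subseteq V_y\cap V_x=\emptyset$, which is the claim. The step carrying the whole argument is the parity computation that forces $D_x\neq D_y$; once distinctness of the components is in hand, the rest is routine bookkeeping about where the neighbors of off-cycle vertices can lie.
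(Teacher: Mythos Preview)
Your proof is correct and follows essentially the same approach as the paper: both hinge on the parity observation that the two arcs of $C$ between $x$ and $y$ have opposite lengths modulo $2$, so any $x$--$y$ path avoiding $E(C)$ (and meeting $C$ only at its endpoints) combines with one of them to form a second odd cycle. Your version is more carefully organized---you first isolate the structural fact that distinct cycle vertices lie in distinct components of $G-E(C)$, then read off $N_G[V(D_y-y)]\subseteq V_y$ and finish by disjointness---whereas the paper argues directly from a hypothetical vertex $t$ in the intersection, but the underlying idea is identical.
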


\begin{proof}
Suppose, to the contrary, that there is some vertex $t\in N_{G}\left[
V\left(  D_{y}-y\right)  \right]  \cap N_{G}\left[  V\left(  D_{x}-x\right)
\right]  $. It means that there exists a cordless path $P$\ connecting $x$ and
$y$, such that $V\left(  P\right)  \subseteq V\left(  D_{y}-y\right)  \cup
V\left(  D_{x}-x\right)  $, in addition to two canonical paths $P_{1}$,
$P_{2}$ joining $x$ and $y$ on $C$. Since $C$ is an odd cycle, then either $P$
and $P_{1}$ or $P$ and $P_{2}$ comprise an extra odd cycle.
\end{proof}

\begin{theorem}
\label{conj1}If $G$ is an almost bipartite non-K\"{o}nig-Egerv\'{a}ry graph
with the unique odd cycle $C$, then
\[
V(C)\cup N_{G}\left[  \mathrm{diadem}\left(  G\right)  \right]  =V\left(
G\right)  .
\]

\end{theorem}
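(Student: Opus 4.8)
Write $X=N_{G}\left[\mathrm{diadem}\left(G\right)\right]$ and $Y=V(G)-X$. Proposition \ref{prop15} already gives the disjointness $V(C)\cap X=\emptyset$, so in particular $V(C)\subseteq Y$; hence the desired equality $V(C)\cup X=V(G)$ is equivalent to the reverse inclusion $Y\subseteq V(C)$, i.e.\ to showing that $T:=Y-V(C)=\emptyset$. The plan is to assume $T\neq\emptyset$ and derive a contradiction from a strong deficiency property that $G[Y]$ must satisfy because $\mathrm{diadem}(G)$ is the \emph{maximal} critical independent set. Note that this route proves the union without invoking the K\"{o}nig--Egerv\'{a}ry structure of $G[X]$ from Theorem \ref{th100}.

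The main engine is the following deficiency estimate for $G[Y]$. Since $\mathrm{diadem}(G)$ is a critical independent set (Theorem \ref{th444}\emph{(ii)}), and since no vertex of $Y$ is adjacent to $\mathrm{diadem}(G)$ (such a vertex would lie in $N_{G}(\mathrm{diadem}(G))\subseteq X$), for every independent set $I\subseteq Y$ the set $\mathrm{diadem}(G)\cup I$ is again independent, and a short computation gives
\[
d_{G}\!\left(\mathrm{diadem}(G)\cup I\right)=d(G)+d_{G[Y]}(I),
\]
because the neighbours of $I$ inside $X$ all lie in $N_{G}(\mathrm{diadem}(G))$ while its neighbours inside $Y$ are exactly $N_{G[Y]}(I)$. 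As $d(G)$ is the maximum difference, this forces $d_{G[Y]}(I)\leq0$; moreover equality would make $\mathrm{diadem}(G)\cup I$ a critical independent set, whence $I\subseteq\mathrm{diadem}(G)\subseteq X$, contradicting $\emptyset\neq I\subseteq Y$. Thus every nonempty independent set $I$ of $G[Y]$ is \emph{strictly} deficient: $\left\vert N_{G[Y]}(I)\right\vert\geq\left\vert I\right\vert+1$.

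It then remains to contradict this strict deficiency whenever $T\neq\emptyset$. I would take a connected component $R$ of $G[T]$. Since $R$ contains no vertex of $C$, it is bipartite, and by Lemma \ref{lem10} (equivalently, by the unique–odd–cycle argument used there) $R$ meets $V(C)$ in at most one vertex $y_{R}$, with $R\cup\{y_{R}\}$ still bipartite, say with parts $P_{R},Q_{R}$ where $y_{R}$ has all its neighbours in $Q_{R}$. If $R$ meets no cycle vertex it is a whole component of $G[Y]$, and the larger of its two sides $I$ satisfies $\left\vert N_{G[Y]}(I)\right\vert\leq\left\vert I\right\vert$. If $R$ attaches at $y_{R}$, then $y_{R}\in V(C)\subseteq Y$ (this is exactly where Proposition \ref{prop15}, hence the non-K\"{o}nig--Egerv\'{a}ry hypothesis, is used), and taking $I=P_{R}$ when $\left\vert P_{R}\right\vert\geq\left\vert Q_{R}\right\vert$ gives $N_{G[Y]}(P_{R})\subseteq Q_{R}$, while taking $I=Q_{R}$ otherwise gives $N_{G[Y]}(Q_{R})\subseteq P_{R}\cup\{y_{R}\}$; in both cases $\left\vert N_{G[Y]}(I)\right\vert\leq\left\vert I\right\vert$. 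Either way we obtain a nonempty independent set of $G[Y]$ with nonnegative difference, contradicting the strict deficiency above. Hence $T=\emptyset$ and $Y\subseteq V(C)$, which is the claim.

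The step I expect to be the real obstacle is the strict deficiency of $G[Y]$: turning the bare disjointness/complement information about $N_{G}[\mathrm{diadem}(G)]$ into the usable statement ``$d_{G[Y]}(I)<0$ for every nonempty independent $I$'' requires exploiting that $\mathrm{diadem}(G)$ absorbs \emph{every} critical set, not merely that it is one critical set. Once that is in hand, the bipartite ``lobe'' bookkeeping in the last paragraph is routine, the only delicate point being the accounting of the single attachment vertex $y_{R}$, which is controlled by Lemma \ref{lem10} and Proposition \ref{prop15}.
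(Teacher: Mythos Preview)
Your overall strategy is sound, but there is one concrete slip: $\mathrm{diadem}(G)$ need not be independent. In the paper's own example $G_{1}$ of Figure~\ref{fig34} one has $\mathrm{diadem}(G_{1})=\{a,c,y,z\}$ while $yz\in E(G_{1})$, so the assertion ``$\mathrm{diadem}(G)\cup I$ is again independent'' fails, and with it your deduction that $d_{G[Y]}(I)=0$ forces $I\subseteq\mathrm{diadem}(G)$. The repair is immediate and costs nothing: replace $\mathrm{diadem}(G)$ throughout the deficiency computation by any fixed $A\in\mathrm{MaxCritIndep}(G)$. By Theorem~\ref{th100}\emph{(i)} one still has $N_{G}[A]=X$, so $I\subseteq Y$ implies $N_{G}(I)\cap A=\emptyset$ and $N_{G}(I)\cap X\subseteq N_{G}(A)$; the identity $d_{G}(A\cup I)=d(G)+d_{G[Y]}(I)$ goes through verbatim, and now $A\cup I$ genuinely \emph{is} independent. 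Equality $d_{G[Y]}(I)=0$ with $I\neq\emptyset$ would then make $A\cup I$ a critical independent set strictly larger than $A$, contradicting $A\in\mathrm{MaxCritIndep}(G)$. This is exactly the device the paper uses.

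With that correction your argument is complete and is essentially a repackaging of the paper's proof. Both exploit the maximality of a maximum critical independent set to exclude vertices outside $V(C)\cup X$; the paper proceeds lobe-by-lobe, first treating $A_{y}=A\cap V(D_{y}-y)-N_{G}(y)$ and then $A\cap V(D_{y}-y)\cap N_{G}(y)$, in each case exhibiting an explicit larger critical independent set. You instead isolate the single clean lemma ``$d_{G[Y]}(I)<0$ for every nonempty independent $I\subseteq Y$'' and then manufacture one violating $I$ from a bipartite component $R$ of $G[T]$; your split between $P_{R}$ and $Q_{R}$ (with the single attachment vertex $y_{R}$ controlled via Lemma~\ref{lem10}) corresponds precisely to the paper's two-stage handling of vertices far from $y$ versus vertices in $N_{G}(y)$. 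Your presentation is arguably tidier, but the underlying mechanism is the same.
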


\begin{proof}
Suppose, to the contrary, that $V\left(  G\right)  -\left(  V(C)\cup
N_{G}\left[  \mathrm{diadem}\left(  G\right)  \right]  \right)  =A\neq
\emptyset$. Since $C$ is the unique odd cycle in $G$, the graph $G\left[
A\right]  $ is bipartite.

Let $y\in V\left(  C\right)  $ and $A_{y}=A\cap V\left(  D_{y}-y\right)
-N_{G}\left(  y\right)  $.

If $A_{y}\neq\emptyset$, then there is a non-empty maximum independent set of
$G\left[  A_{y}\right]  $, say $B$. Clearly, the graph $G\left[  A_{y}\right]
$ is bipartite, as a subgraph of $G\left[  A\right]  $. Hence,
\[
\left\vert B\right\vert =\alpha\left(  G\left[  A_{y}\right]  \right)  \geq
\mu\left(  G\left[  A_{y}\right]  \right)  =\left\vert N_{G\left[
A_{y}\right]  }\left(  B\right)  \right\vert ,
\]
because $\alpha\left(  H\right)  \geq\mu\left(  H\right)  $ holds for every
K\"{o}nig-Egerv\'{a}ry graph $H$, by Theorem \ref{th715}\emph{(ii)}. By
Theorem \ref{th100}\emph{(i)}, $N_{G}\left[  \mathrm{diadem}\left(  G\right)
\right]  =N_{G}\left[  X\right]  $ for every $X\in\mathrm{MaxCritIndep}(G)$.
Taking into account that $B\subseteq A_{y}$ and Lemma \ref{lem10}, we obtain
$N_{G}\left(  B\right)  \cap V(C)=\emptyset$. Since $A\cap N_{G}\left(
X\right)  =\emptyset$, by Corollary \ref{cor11}, we obtain $N_{G}\left(
A\right)  \cap X=\emptyset$, and, consequently, $N_{G}\left(  B\right)  \cap
X=\emptyset$, because $B\subseteq A$. Thus,
\[
N_{G}\left(  B\right)  =N_{G\left[  A_{y}\right]  }\left(  B\right)
\cup\left(  N_{G}\left(  B\right)  \cap N_{G}\left(  X\right)  \right)  .
\]
Therefore,
\[
d\left(  G\right)  \geq d_{G}\left(  B\cup X\right)  =\left\vert B\right\vert
-\left\vert N_{G\left[  A_{y}\right]  }\left(  B\right)  \right\vert
+\left\vert X\right\vert -\left\vert N_{G}\left(  X\right)  \right\vert \geq
d_{G}\left(  X\right)  =d\left(  G\right)  ,
\]
which means that the set $B\cup X$ is critical. On the other hand, the set
$B\cup X$ is independent, by the definition of the set $A$, and the fact that
$B$ and $X$ are independent in $G$. It implies that $B=\emptyset$, because
$X\in\mathrm{MaxCritIndep}(G)$. In other words, $\alpha\left(  G\left[
A_{y}\right]  \right)  =0$ and, consequently, $A_{y}=\emptyset$.

Thus, $A\cap V\left(  D_{y}-y\right)  \subseteq N_{G}\left(  y\right)  $.
Hence, the set $A\cap V\left(  D_{y}-y\right)  $ is independent, otherwise,
$G$ has more than one odd cycle. Moreover, by Lemma \ref{lem10}, we have
\[
\left\{  y\right\}  \subseteq N_{G}\left(  A\cap V\left(  D_{y}-y\right)
\right)  \subseteq N_{G}\left(  X\right)  \cup\left\{  y\right\}  .
\]
Therefore,
\begin{gather*}
d\left(  G\right)  \geq d_{G}\left(  \left(  A\cap V\left(  D_{y}-y\right)
\right)  \cup X\right)  =\\
\left\vert A\cap V\left(  D_{y}-y\right)  \right\vert -\left\vert \left\{
y\right\}  \right\vert +\left\vert X\right\vert -\left\vert N_{G}\left(
X\right)  \right\vert \geq d_{G}\left(  X\right)  =d\left(  G\right)  ,
\end{gather*}
which means that the set $\left(  A\cap V\left(  D_{y}-y\right)  \right)  \cup
X$ is critical. On the other hand, the set $\left(  A\cap V\left(
D_{y}-y\right)  \right)  \cup X$ is independent, since $A\cap V\left(
D_{y}-y\right)  $ and $X$ are independent in $G$, and the fact that $A\cap
N_{G}\left[  \mathrm{diadem}\left(  G\right)  \right]  =\emptyset$. It implies
that $A\cap V\left(  D_{y}-y\right)  =\emptyset$, because $X\in
\mathrm{MaxCritIndep}(G)$.

Consequently, $A\cap V\left(  D_{y}-y\right)  =\emptyset$ for every $y\in
V\left(  C\right)  $, which implies
\[
A=%
{\displaystyle\bigcup\limits_{y\in V\left(  C\right)  }}
\left(  A\cap V\left(  D_{y}-y\right)  \right)  =\emptyset.
\]

In conclusion, $V(C)\cup N_{G}\left[  \mathrm{diadem}\left(  G\right)
\right]  =V\left(  G\right)  $, as claimed.
\end{proof}

In summary, Proposition \ref{prop15} and Theorem \ref{conj1} claim that the
sets $V(C)$ and $N_{G}\left[  \mathrm{diadem}\left(  G\right)  \right]  $ form
a partition of $V\left(  G\right)  $, for every almost bipartite
non-K\"{o}nig-Egerv\'{a}ry graph $G$ with the unique odd cycle $C$.

\begin{proposition}
\label{prop17} For every graph $G$, the following equality holds:%
\[
N_{G}\left(  \mathrm{diadem}\left(  G\right)  \right)  -\mathrm{diadem}\left(
G\right)  =%
{\displaystyle\bigcap\limits_{A\in\mathrm{MaxCritIndep}(G)}}
N_{G}\left(  A\right)  .
\]

\end{proposition}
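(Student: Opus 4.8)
The plan is to deduce the identity from two results already in hand: Theorem \ref{th100}\emph{(i)}, which asserts that all members of $\mathrm{MaxCritIndep}(G)$ share one common closed neighborhood, and Theorem \ref{th333}\emph{(ii)}, which lets me describe the diadem as the union of those maximum sets. Writing $D=\mathrm{diadem}\left(  G\right)$ for brevity, my first step is to verify that $D=\bigcup_{A\in\mathrm{MaxCritIndep}(G)}A$. The inclusion ``$\supseteq$'' holds because each $A\in\mathrm{MaxCritIndep}(G)$ is itself a critical independent set, hence contained in $D$; for ``$\subseteq$'', any $v\in D$ lies in some critical independent set, which by Theorem \ref{th333}\emph{(ii)} is contained in a member of $\mathrm{MaxCritIndep}(G)$, so $v$ belongs to the union.

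Next I would fix the common set $X$ furnished by Theorem \ref{th100}\emph{(i)}, so that $X=N_{G}\left[  A\right]$ for every $A\in\mathrm{MaxCritIndep}(G)$. Since the closed neighborhood operator commutes with (finite) unions, the first step gives $N_{G}\left[  D\right]  =\bigcup_{A}N_{G}\left[  A\right]  =X$, exactly the identification used in the proof of Proposition \ref{prop14}. Because every $A\in\mathrm{MaxCritIndep}(G)$ is independent, we have $A\cap N_{G}\left(  A\right)  =\emptyset$, and therefore $N_{G}\left(  A\right)  =N_{G}\left[  A\right]  \setminus A=X\setminus A$.

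The final step is pure set algebra: intersecting over all such $A$ and applying De Morgan's law yields $\bigcap_{A}N_{G}\left(  A\right)  =\bigcap_{A}\left(  X\setminus A\right)  =X\setminus\bigcup_{A}A=X\setminus D=N_{G}\left[  D\right]  \setminus D=N_{G}\left(  D\right)  -D$, which is precisely the asserted equality. I expect no genuine obstacle: the only points deserving care are the identification $N_{G}\left[  D\right]  =X$ (i.e.\ checking that the closed neighborhood distributes over the union of maximum critical independent sets) and the observation that it is the independence of each individual $A$, rather than of $D$, that licenses rewriting $N_{G}(A)$ as $X\setminus A$. Note also that the last two equalities $X\setminus D=N_{G}\left[  D\right]  \setminus D=N_{G}\left(  D\right)  -D$ hold whether or not $D$ is independent, so the ``$-\,D$'' on the left-hand side is accounted for automatically and the statement remains valid for every graph $G$.
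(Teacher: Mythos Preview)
Your proof is correct and rests on the same key ingredient as the paper's---namely Theorem \ref{th100}\emph{(i)}, the fact that every $A\in\mathrm{MaxCritIndep}(G)$ has the same closed neighbourhood $X$. The paper argues by double inclusion, chasing an element $x$ in each direction and invoking Theorem \ref{th100}\emph{(i)} only for the containment $N_G(D)-D\subseteq\bigcap_A N_G(A)$; you instead identify $N_G[D]=X$ up front and then carry out the whole computation as a single chain of set-algebra identities via De Morgan. Your route is a bit more compact and symmetric, while the paper's element-chasing makes the role of independence of each $A$ (to exclude $x$ from $D$) slightly more explicit; substantively, though, the two arguments are the same.
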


\begin{proof}
Let $x\in%
{\displaystyle\bigcap\limits_{A\in\mathrm{MaxCritIndep}(G)}}
N_{G}\left(  A\right)  $. It follows that $x\in N_{G}\left(  A\right)  $ for
every $A\in\mathrm{MaxCritIndep}(G)$, i.e., $x$ has a neighbor in every
$A\in\mathrm{MaxCritIndep}(G)$. Therefore,
\[
x\notin%
{\displaystyle\bigcup\limits_{A\in\mathrm{MaxCritIndep}(G)}}
A=\mathrm{diadem}\left(  G\right)  ,
\]
since each $A$ is independent. Hence,
\[
x\in%
{\displaystyle\bigcup\limits_{A\in\mathrm{MaxCritIndep}(G)}}
N_{G}\left(  A\right)  =N_{G}\left(  \mathrm{diadem}\left(  G\right)  \right)
.
\]
Consequently, we get
\[%
{\displaystyle\bigcap\limits_{A\in\mathrm{MaxCritIndep}(G)}}
N_{G}\left(  A\right)  \subseteq N_{G}\left(  \mathrm{diadem}\left(  G\right)
\right)  -\mathrm{diadem}\left(  G\right)  .
\]

Conversely, let $x\in N_{G}\left(  \mathrm{diadem}\left(  G\right)  \right)
-\mathrm{diadem}\left(  G\right)  $. Thus, $x\notin A$, for every
$A\in\mathrm{MaxCritIndep}(G)$, because
\[
\mathrm{diadem}\left(  G\right)  =%
{\displaystyle\bigcup\limits_{A\in\mathrm{MaxCritIndep}(G)}}
A.
\]

Assume, to the contrary, that there exists some $A_{x}\in\mathrm{MaxCritIndep}%
(G)$, such that $x\notin N_{G}\left(  A_{x}\right)  $. On the other hand,
there is $A^{x}\in\mathrm{MaxCritIndep}(G)$, such that $x\in N_{G}\left(
A^{x}\right)  $, because $x\in N_{G}\left(  \mathrm{diadem}\left(  G\right)
\right)  $. By Theorem \ref{th100}\emph{(i)}, the set $A_{x}\cup N_{G}\left(
A_{x}\right)  $ coincides with the set $A^{x}\cup N_{G}\left(  A^{x}\right)
$, which is impossible, since we know that $x\in A^{x}\cup N_{G}\left(
A^{x}\right)  $ and $x\notin A_{x}\cup N_{G}\left(  A_{x}\right)  $.
\end{proof}

\begin{lemma}
\label{lem13}If $G$ is an almost bipartite non-K\"{o}nig-Egerv\'{a}ry graph
with the unique cycle $C$, then $\left(  N_{G}\left(  x\right)  \cap
N_{G}\left(  y\right)  \right)  -V\left(  C\right)  =\emptyset$ for every
distinct $x,y\in V\left(  C\right)  $, and consequently,%
\[
N_{G}\left(  V\left(  C\right)  \right)  -V\left(  C\right)  \subseteq
N_{G}\left(  \mathrm{diadem}\left(  G\right)  \right)  -\mathrm{diadem}\left(
G\right)
\]
and
\[%
{\displaystyle\sum\limits_{v\in V\left(  C\right)  }}
\left(  \deg\left(  v\right)  -2\right)  \leq\left\vert N_{G}\left(
\mathrm{diadem}\left(  G\right)  \right)  -\mathrm{diadem}\left(  G\right)
\right\vert .
\]

\end{lemma}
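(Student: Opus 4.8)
The plan is to establish the three assertions in the order stated, since each later one builds on the earlier ones.

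First, for the claim $\left(N_G(x)\cap N_G(y)\right)-V(C)=\emptyset$, I would argue by contradiction in the same spirit as Lemma \ref{lem10}. Suppose some $t\notin V(C)$ is adjacent to two distinct vertices $x,y\in V(C)$. The cycle $C$ decomposes into two internally disjoint $x$--$y$ arcs whose lengths sum to $\left\vert V(C)\right\vert$, which is odd; hence exactly one of these arcs has odd length. Adjoining to that odd arc the length-$2$ path $x$--$t$--$y$ (internally disjoint from $C$, as $t\notin V(C)$) produces a cycle of odd length through $t$, distinct from $C$. This contradicts the hypothesis that $C$ is the unique odd cycle of $G$.

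Next, for the inclusion $N_G(V(C))-V(C)\subseteq N_G(\mathrm{diadem}(G))-\mathrm{diadem}(G)$, I would invoke the partition already proved: by Proposition \ref{prop15} and Theorem \ref{conj1}, the sets $V(C)$ and $N_G[\mathrm{diadem}(G)]$ partition $V(G)$, so $N_G[\mathrm{diadem}(G)]=V(G)-V(C)$. Take any $t\in N_G(V(C))-V(C)$. Since $t\notin V(C)$, we get $t\in N_G[\mathrm{diadem}(G)]$; and since $t\in N_G(V(C))\subseteq N_G[V(C)]$ while $N_G[V(C)]\cap\mathrm{diadem}(G)=\emptyset$ (the equivalent form of Proposition \ref{prop15}), we get $t\notin\mathrm{diadem}(G)$. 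Together these give $t\in N_G[\mathrm{diadem}(G)]-\mathrm{diadem}(G)=N_G(\mathrm{diadem}(G))-\mathrm{diadem}(G)$, as required.

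Finally, for the inequality, I would first record that $C$ is chordless: a chord would split $C$ into two cycles whose lengths add up to $\left\vert V(C)\right\vert+2$, again odd, so one of them would be a second odd cycle, which is impossible. Consequently each $v\in V(C)$ has exactly its two cycle-neighbors inside $V(C)$, so $\left\vert N_G(v)-V(C)\right\vert=\deg(v)-2$. The first assertion shows the sets $N_G(v)-V(C)$, for $v\in V(C)$, are pairwise disjoint, whence $\left\vert N_G(V(C))-V(C)\right\vert=\sum_{v\in V(C)}(\deg(v)-2)$. Combining this exact count with the inclusion from the second assertion yields the stated bound. I expect the main obstacle to be the parity argument of the first assertion, where one must verify that the constructed closed walk is a genuine (internally vertex-disjoint) cycle of the correct, odd parity and genuinely distinct from $C$; the remaining two assertions are essentially bookkeeping on top of the partition result and the disjointness coming from the first part.
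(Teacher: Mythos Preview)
Your proof is correct and follows the same overall plan as the paper: part one by a parity contradiction producing a second odd cycle, part two from the partition $\{V(C),N_G[\mathrm{diadem}(G)]\}$ established by Proposition~\ref{prop15} and Theorem~\ref{conj1}, and part three by combining the exact count $\left\vert N_G(V(C))-V(C)\right\vert=\sum_{v\in V(C)}(\deg(v)-2)$ with the inclusion of part two.

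There is one genuine streamlining worth noting. For the inclusion in part two the paper takes a detour: it first argues, via Theorem~\ref{th100}\emph{(i)}, that $N_G(V(C))-V(C)\subseteq N_G(A)$ for every $A\in\mathrm{MaxCritIndep}(G)$, hence lies in $\bigcap_A N_G(A)$, and then invokes Proposition~\ref{prop17} to identify that intersection with $N_G(\mathrm{diadem}(G))-\mathrm{diadem}(G)$. You bypass both auxiliary results by arguing directly: the partition gives $t\in N_G[\mathrm{diadem}(G)]$ once $t\notin V(C)$, while the equivalent form of Proposition~\ref{prop15} gives $t\notin\mathrm{diadem}(G)$ once $t\in N_G[V(C)]$, so $t\in N_G[\mathrm{diadem}(G)]-\mathrm{diadem}(G)=N_G(\mathrm{diadem}(G))-\mathrm{diadem}(G)$. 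This is shorter and self-contained. You are also a bit more explicit than the paper in part three, spelling out that $C$ is chordless (so each $v\in V(C)$ has exactly two neighbours inside $V(C)$), which the paper leaves implicit when asserting $\sum_{v\in V(C)}(\deg(v)-2)=\left\vert N_G(V(C))-V(C)\right\vert$.
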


\begin{proof}
Assume, to the contrary, that there exists some $z\in\left(  N_{G}\left(
x\right)  \cap N_{G}\left(  y\right)  \right)  -V\left(  C\right)  $. Then the
edges $xz,zy$ together with one of the paths on $C$ connecting $x$ and $y$,
bring birth to an additional odd cycle in $G$, in contradiction with the
hypothesis on $G$.

Since $\left(  N_{G}\left(  x\right)  \cap N_{G}\left(  y\right)  \right)
-V\left(  C\right)  =\emptyset$ for every distinct $x,y\in V\left(  C\right)
$, we obtain%
\[%
{\displaystyle\sum\limits_{v\in V\left(  C\right)  }}
\left(  \deg\left(  v\right)  -2\right)  =\left\vert N_{G}\left(  V\left(
C\right)  \right)  -V\left(  C\right)  \right\vert .
\]

By Theorem \ref{th100}\emph{(i}), Proposition\emph{ }\ref{prop15} and Theorem
\ref{conj1}, $N_{G}\left(  V\left(  C\right)  \right)  -V\left(  C\right)
\subseteq N_{G}\left(  A\right)  $ for every $A\in\mathrm{MaxCritIndep}(G)$.

Hence,
\[
N_{G}\left(  V\left(  C\right)  \right)  -V\left(  C\right)  \subseteq%
{\displaystyle\bigcap\limits_{A\in\mathrm{MaxCritIndep}(G)}}
N_{G}\left(  A\right)  .
\]
Therefore,
\[%
{\displaystyle\sum\limits_{v\in V\left(  C\right)  }}
\left(  \deg\left(  v\right)  -2\right)  \leq\left\vert
{\displaystyle\bigcap\limits_{A\in\mathrm{MaxCritIndep}(G)}}
N_{G}\left(  A\right)  \right\vert .
\]

Thus, by Proposition \ref{prop17}, we obtain%
\[%
{\displaystyle\sum\limits_{v\in V\left(  C\right)  }}
\left(  \deg\left(  v\right)  -2\right)  \leq\left\vert N_{G}\left(
\mathrm{diadem}\left(  G\right)  \right)  -\mathrm{diadem}\left(  G\right)
\right\vert
\]
as claimed.
\end{proof}

It is worth mentioning that the inequality in Lemma \ref{lem13} may be strict
or may turn out to be an equality. For instance, consider the two graphs from
Figure \ref{fig34}, where:

\begin{itemize}
\item $\mathrm{diadem}\left(  G_{1}\right)  =\left\{  a,c,y,z\right\}  $,
$N_{G_{1}}\left(  \mathrm{diadem}\left(  G_{1}\right)  \right)  =\left\{
b,x,y,z\right\}  $ and, consequently,
\end{itemize}

\[%
{\displaystyle\sum\limits_{v\in V\left(  C\right)  }}
\left(  \deg\left(  v\right)  -2\right)  =1<2=\left\vert N_{G_{1}}\left(
\mathrm{diadem}\left(  G_{1}\right)  \right)  -\mathrm{diadem}\left(
G_{1}\right)  \right\vert ;
\]

\begin{figure}[h]
\setlength{\unitlength}{1cm}\begin{picture}(5,1.3)\thicklines
\multiput(2,0)(1,0){5}{\circle*{0.29}}
\multiput(2,1)(1,0){4}{\circle*{0.29}}
\put(2,0){\line(1,0){4}}
\put(2,0){\line(0,1){1}}
\put(2,1){\line(1,-1){1}}
\put(3,1){\line(1,0){2}}
\put(3,1){\line(1,-1){1}}
\put(4,0){\line(1,1){1}}
\put(3,1.3){\makebox(0,0){$a$}}
\put(4,1.33){\makebox(0,0){$b$}}
\put(5,1.3){\makebox(0,0){$c$}}
\put(4,0.35){\makebox(0,0){$x$}}
\put(5,0.35){\makebox(0,0){$y$}}
\put(6,0.35){\makebox(0,0){$z$}}
\put(1.2,0.5){\makebox(0,0){$G_{1}$}}
\multiput(8,0)(1,0){4}{\circle*{0.29}}
\multiput(8,1)(1,0){4}{\circle*{0.29}}
\put(8,0){\line(1,0){3}}
\put(8,0){\line(0,1){1}}
\put(8,1){\line(1,-1){1}}
\put(9,1){\line(1,-1){1}}
\put(10,0){\line(0,1){1}}
\put(11,0){\line(0,1){1}}
\put(9,1.3){\makebox(0,0){$u$}}
\put(10,1.3){\makebox(0,0){$v$}}
\put(11,1.3){\makebox(0,0){$w$}}
\put(10.3,0.3){\makebox(0,0){$s$}}
\put(11.3,0.3){\makebox(0,0){$t$}}
\put(7.2,0.5){\makebox(0,0){$G_{2}$}}
\end{picture}\caption{Both $G_{1}$ and $G_{2}$ are almost bipartite
non-K\"{o}nig-Egerv\'{a}ry.}%
\label{fig34}%
\end{figure}
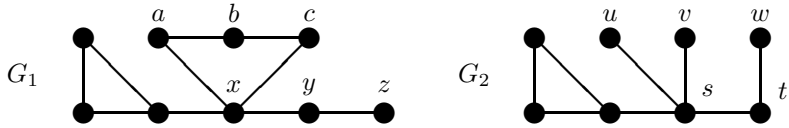

\begin{itemize}
\item $\mathrm{diadem}\left(  G_{2}\right)  =\left\{  u,v,w,t\right\}  $,
$N_{G_{2}}\left(  \mathrm{diadem}\left(  G_{2}\right)  \right)  =\left\{
s,t,w\right\}  $ and, consequently,
\end{itemize}

\[%
{\displaystyle\sum\limits_{v\in V\left(  C\right)  }}
\left(  \deg\left(  v\right)  -2\right)  =1=\left\vert N_{G_{2}}\left(
\mathrm{diadem}\left(  G_{2}\right)  \right)  -\mathrm{diadem}\left(
G_{2}\right)  \right\vert .
\]

The following theorem validates a conjecture from \cite{LevMan2023a}.

\begin{theorem}
\label{th18}If $G$ is an almost bipartite non-K\"{o}nig-Egerv\'{a}ry graph,
then every maximum matching of $G$ contains $\left\lfloor \frac{V(C)}%
{2}\right\rfloor $ edges belonging to its unique odd cycle $C$. Moreover,
every maximum matching of $G\left[  V\left(  C\right)  \right]  $ may be
enlarged to a maximum matching of $G$ that saturates all the vertices of $C$
but one.
\end{theorem}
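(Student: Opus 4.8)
The plan is to reduce everything to the clean partition of $V(G)$ into $V(C)$ and $N_G[\mathrm{diadem}(G)]$ supplied by Proposition~\ref{prop15} and Theorem~\ref{conj1}, and then run a single double-counting argument on an arbitrary maximum matching. Fix a maximum critical independent set $X\in\mathrm{MaxCritIndep}(G)$ and set $N_X=N_G(X)$. By Theorem~\ref{th100}\emph{(i)} we have $N_G[X]=N_G[\mathrm{diadem}(G)]$, so the partition reads $V(G)=V(C)\cup(X\cup N_X)$ with the three blocks pairwise disjoint. I would record four structural facts. \textbf{(a)} $H:=G[X\cup N_X]$ is König--Egerváry, by Theorem~\ref{th100}\emph{(ii)}. \textbf{(b)} There is no edge between $V(C)$ and $X$, since $N_G[V(C)]\cap\mathrm{diadem}(G)=\emptyset$ by Proposition~\ref{prop15}; hence every edge joining $V(C)$ to $V(G)-V(C)$ lands in $N_X$. \textbf{(c)} $G[V(C)]=C=C_{2k+1}$ is an induced odd cycle: a chord would split $C$ into two shorter cycles of lengths summing to $2k+3$, one of which is odd, contradicting almost-bipartiteness; so $\mu(C)=k=\lfloor|V(C)|/2\rfloor$. \textbf{(d)} Since $X$ is critical, Theorem~\ref{th333}\emph{(iii)} yields a matching of $N_X$ into $X$, whence by Theorem~\ref{th11} every vertex of $N_X$ is $\mu$-critical, i.e.\ saturated by every maximum matching of $G$.

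Next I would pin down the cardinalities. Because $X$ is critical, $|X|-|N_X|=d(X)=d(G)$; feeding $n(G)=(2k+1)+|X|+|N_X|$ and $d(G)=|X|-|N_X|$ into Corollary~\ref{cor8} gives $\mu(G)=k+|N_X|$, so the number of vertices left unsaturated by any maximum matching is $n(G)-2\mu(G)=1+d(G)$. Combining \textbf{(a)} with the matching of $N_X$ into $X$ forces $\alpha(H)=|X|$ and $\mu(H)=|N_X|$, a fact reused below.

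The heart of the proof is the counting. Let $M$ be any maximum matching and split it as $M_C$ (both ends in $V(C)$, so $a:=|M_C|$ edges of the cycle $C$), $M_{cr}$ (one end in $V(C)$, the other necessarily in $N_X$ by \textbf{(b)}, $b:=|M_{cr}|$ edges), and $M_H$ (both ends in $X\cup N_X$). Since $X$ is independent, every edge of $M_H$ meets $N_X$; write $c_1$ for the number of $X$--$N_X$ edges and $c_2$ for the number of $N_X$--$N_X$ edges. Then $M$ saturates $2a+b$ vertices of $C$, exactly $c_1$ vertices of $X$, and $b+c_1+2c_2$ vertices of $N_X$, the last quantity equalling $|N_X|$ by \textbf{(d)}. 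Equating the two expressions for the number of unsaturated vertices,
\[
\bigl[(2k+1)-(2a+b)\bigr]+\bigl[|X|-c_1\bigr]=1+d(G)=1+|X|-|N_X|,
\]
and substituting $|N_X|=b+c_1+2c_2$ collapses this to $a=k+c_2$. As $a\le\mu(C)=k$ and $c_2\ge 0$, I conclude $a=k$ (and incidentally $c_2=0$): every maximum matching of $G$ uses exactly $k=\lfloor|V(C)|/2\rfloor$ edges of $C$.

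For the ``moreover'' clause, let $M_C$ be any maximum matching of $G[V(C)]=C$, which has $k$ edges and misses exactly one vertex of $C$; choosing any maximum matching $M_H$ of $H$ (of size $\mu(H)=|N_X|$), the union $M_C\cup M_H$ is automatically a matching because $V(C)$ is disjoint from $X\cup N_X$, and its size is $k+|N_X|=\mu(G)$, so it is a maximum matching of $G$ saturating every vertex of $C$ but one. The main obstacle is not the algebra but securing the two inputs to the count: that all cross edges land in $N_X$ (fact \textbf{(b)}, from Proposition~\ref{prop15}) and that every vertex of $N_X$ is saturated by $M$ (fact \textbf{(d)}, from Theorems~\ref{th333} and \ref{th11}); once these hold, equating the two counts of unsaturated vertices forces $a=k$ with no slack.
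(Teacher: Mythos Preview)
Your proof is correct and rests on the same structural inputs as the paper's---the partition $V(G)=V(C)\cup N_G[X]$ from Proposition~\ref{prop15} and Theorem~\ref{conj1}, the identity $\mu(G)=k+|N_X|$ from Corollary~\ref{cor8}, and the fact that every vertex of $N_X$ is $\mu$-critical. The paper, however, argues by contradiction and cases: it assumes $|M\cap E(C)|\le k-1$, locates two consecutive cycle vertices $a,b$ not covered by $M\cap E(C)$, and then in each case (according to whether $a,b$ are matched outward into $N_X$) bounds $|M|$ above by $\mu(G)-1$. Your single global double-count of unsaturated vertices short-circuits this case split: equating $(2k+1-2a-b)+(|X|-c_1)$ with $1+d(G)$ and plugging in $|N_X|=b+c_1+2c_2$ forces $a=k+c_2$, and the bound $a\le\mu(C)=k$ finishes the job cleanly. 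The ``moreover'' clause is handled identically in both proofs. What your version buys is a uniform argument with no case analysis and no need to isolate the specific pair $a,b$; what the paper's version makes slightly more visible is exactly which edge-swaps would augment $M$ in the bad case.
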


\begin{proof}
Let $A\in\mathrm{MaxCritIndep}(G)$ and $M_{0}$ be a matching from
$N_{G}\left(  A\right)  $\ into $A$, that exists according to Theorem
\ref{th333}\emph{(iii)}.

Suppose that $M$ is a maximum matching of $G$ such that $\left\vert M\cap
E\left(  C\right)  \right\vert =\left\lfloor \frac{V\left(  C\right)  }%
{2}\right\rfloor -1$. Hence, there exist two adjacent vertices belonging to
$V\left(  C\right)  $, say $a$ and $b$, which are not saturated by $M$.
Clearly, at least one of them has a neighbor outside of $V\left(  C\right)  $,
otherwise $\left\vert M\cup\left\{  ab\right\}  \right\vert >\left\vert
M\right\vert $. By Proposition \ref{prop15}, Theorem \ref{conj1}, and Lemma
\ref{lem13},
\[
\left(  N_{G}\left(  a\right)  \cup N_{G}\left(  b\right)  \right)  -V\left(
C\right)  \subseteq N_{G}\left(  A\right)  \text{ and }\left(  N_{G}\left(
a\right)  \cap N_{G}\left(  b\right)  \right)  -V\left(  C\right)
=\emptyset.
\]

\begin{itemize}
\item Case 1. $\left\vert M\left(  a\right)  \right\vert =1$ and $\left\vert
M\left(  b\right)  \right\vert =1$. Thus, $M\left(  a\right)  ,M\left(
b\right)  \in N_{G}\left(  A\right)  $, and at most $\left\vert M_{0}%
\right\vert -2=\left\vert N_{G}\left(  A\right)  \right\vert -2$ edges of $M$
are located between $N_{G}\left(  A\right)  $\ and $A$. Therefore, by
Corollary \ref{cor8}, we obtain
\begin{gather*}
\left\vert M\right\vert \leq\frac{\left\vert V\left(  C\right)  \right\vert
-1}{2}-1+2+\left\vert N_{G}\left(  A\right)  \right\vert -2=\\
\frac{\left\vert V\left(  C\right)  \right\vert -1}{2}-1+\frac{n\left(
G\right)  -\left\vert V\left(  C\right)  \right\vert -d\left(  G\right)  }%
{2}=\mu\left(  G\right)  -1,
\end{gather*}
which contradicts the fact that $M$ is a maximum matching of $G$.

\item Case 2. $\left\vert M\left(  a\right)  \right\vert =1$ and $\left\vert
M\left(  b\right)  \right\vert =0$ or vice versa. Thus, $M\left(  a\right)
\in N_{G}\left(  A\right)  $, and at most $\left\vert M_{0}\right\vert
-1=\left\vert N_{G}\left(  A\right)  \right\vert -1$ edges of $M$ are located
between $N_{G}\left(  A\right)  $\ and $A$. Therefore, by Corollary
\ref{cor8}, we obtain
\begin{gather*}
\left\vert M\right\vert \leq\frac{\left\vert V\left(  C\right)  \right\vert
-1}{2}-1+1+\left\vert N_{G}\left(  A\right)  \right\vert -1=\\
\frac{\left\vert V\left(  C\right)  \right\vert -1}{2}-1+\frac{n\left(
G\right)  -\left\vert V\left(  C\right)  \right\vert -d\left(  G\right)  }%
{2}=\mu\left(  G\right)  -1,
\end{gather*}
which contradicts the fact that $M$ is a maximum matching of $G$.
\end{itemize}

If $\left\vert M\cap E\left(  C\right)  \right\vert <\left\lfloor
\frac{V\left(  C\right)  }{2}\right\rfloor -1$, then the gap between
$\left\vert M\right\vert $ and $\mu\left(  G\right)  $ is even bigger than $1$.

Therefore, if $M$ is a maximum matching of $G$, then $\left\vert M\cap
E\left(  C\right)  \right\vert =\left\lfloor \frac{V\left(  C\right)  }%
{2}\right\rfloor $ as stated.

In order to prove that every maximum matching of $G\left[  V\left(  C\right)
\right]  $ may be enlarged to a maximum matching of $G$, let us recall that
\[
\left\vert A\right\vert +\left\vert N_{G}\left(  A\right)  \right\vert
=n\left(  G\right)  -\left\vert V\left(  C\right)  \right\vert
\]
for every $A\in\mathrm{MaxCritIndep}(G)$ by Proposition \ref{prop15} and
Theorem \ref{conj1}. In addition, $\mu\left(  N_{G}\left[  A\right]  \right)
=\left\vert N_{G}\left(  A\right)  \right\vert $, since $G\left[  N_{G}\left[
A\right]  \right]  $ is a K\"{o}nig-Egerv\'{a}ry graph in accordance with
Theorem \ref{th100}\emph{(ii)}.

By Proposition \ref{prop15}, Theorem \ref{conj1}, and Corollary \ref{cor8}, it
is enough to prove that
\[
\frac{\left\vert V\left(  C\right)  \right\vert -1}{2}+\mu\left(  N_{G}\left[
A\right]  \right)  =\mu\left(  G\right)  =\frac{n\left(  G\right)  -d\left(
G\right)  -1}{2},
\]
which is true because%
\begin{gather*}
\frac{n\left(  G\right)  -d\left(  G\right)  -1}{2}=\\
\frac{\left\vert A\right\vert +\left\vert N_{G}\left(  A\right)  \right\vert
+\left\vert V\left(  C\right)  \right\vert -\left\vert A\right\vert
+\left\vert N_{G}\left(  A\right)  \right\vert -1}{2}=\frac{\left\vert
V\left(  C\right)  \right\vert -1}{2}+\left\vert N_{G}\left(  A\right)
\right\vert ,
\end{gather*}
as claimed.
\end{proof}

\begin{theorem}
If $G$ is an almost bipartite non-K\"{o}nig-Egerv\'{a}ry graph with the unique
odd cycle $C$, then each maximum matching of $G$ consists of $\ \left\lfloor
\frac{V\left(  C\right)  }{2}\right\rfloor $ edges from $C$, and either all
the edges of a maximum matching in $G\left[  N_{G}\left[  \mathrm{diadem}%
\left(  G\right)  \right]  \right]  $ or an edge $xy$ and a maximum matching
of $G\left[  N_{G}\left[  \mathrm{diadem}\left(  G\right)  \right]  -y\right]
$, where $x\in V\left(  C\right)  $ and $y\in N_{G}\left(  \mathrm{diadem}%
\left(  G\right)  \right)  $.
\end{theorem}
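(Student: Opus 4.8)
The plan is to combine the partition $V(G)=V(C)\cup N_{G}\left[\mathrm{diadem}(G)\right]$ established by Proposition \ref{prop15} and Theorem \ref{conj1} with the matching count of Theorem \ref{th18}. Fix $A\in\mathrm{MaxCritIndep}(G)$ and abbreviate $D=N_{G}\left[\mathrm{diadem}(G)\right]=N_{G}[A]$, the last equality coming from Theorem \ref{th100}\emph{(i)}. Recall two facts I will lean on: $V(C)$ and $D$ are \emph{disjoint}, and $G[D]=G[N_{G}[A]]$ is a K\"onig-Egerv\'ary graph with $\mu(G[D])=|N_{G}(A)|$ (Theorem \ref{th100}\emph{(ii)} together with the matching from $N_{G}(A)$ into $A$ granted by Theorem \ref{th333}\emph{(iii)}). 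Let $M$ be an arbitrary maximum matching of $G$.

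The structural heart of the argument is an observation about which edges of $M$ can cross between $V(C)$ and $D$. By Theorem \ref{th18}, exactly $\lfloor|V(C)|/2\rfloor=(|V(C)|-1)/2$ edges of $M$ lie in $C$, and since $C$ is odd these saturate all but one cycle vertex $c^{*}$. Every cycle vertex other than $c^{*}$ is therefore already matched inside $C$, so no edge of $M$ with an endpoint outside $V(C)$ can touch it. Hence $M$ has \emph{at most one} crossing edge, and if it exists it must be incident to $c^{*}$. This dichotomy --- no crossing edge, or a single crossing edge $xy$ with $x=c^{*}$ --- yields precisely the two alternatives in the statement.

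In the first case every non-cycle edge of $M$ has both endpoints in $D$, so $M$ splits as the $(|V(C)|-1)/2$ cycle edges together with a matching $M_{D}$ of $G[D]$. Invoking the identity $\mu(G)=(|V(C)|-1)/2+|N_{G}(A)|$ derived in the proof of Theorem \ref{th18} and the equality $\mu(G[D])=|N_{G}(A)|$, the fact that $|M|=\mu(G)$ forces $|M_{D}|=\mu(G[D])$, so $M_{D}$ is a maximum matching of $G[D]$. In the second case the crossing edge is $xy$ with $x=c^{*}\in V(C)$ and $y\in D$; as $y\in N_{G}(V(C))-V(C)$, Lemma \ref{lem13} places $y$ in $N_{G}(\mathrm{diadem}(G))$, matching the statement. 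The remaining edges of $M$ then form a matching $M_{D}'$ of $G[D-y]$, and the same counting gives $|M_{D}'|=|N_{G}(A)|-1$.

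It remains to certify that $M_{D}'$ is maximum in $G[D-y]$, i.e.\ that $\mu(G[D-y])=\mu(G[D])-1$, and this is the step I expect to be the main obstacle. The key is that $y\in N_{G}(A)$ is $\mu$-critical \emph{within the subgraph} $G[D]$: applying Theorem \ref{th11} to $G[D]$ --- in which $A$ is independent, $N_{G[D]}(A)=N_{G}(A)$ because $D=N_{G}[A]$ absorbs all neighbors of $A$, and a matching from $N_{G}(A)$ into $A$ exists by Theorem \ref{th333}\emph{(iii)} --- shows that every vertex of $N_{G}(A)$, in particular $y$, is saturated by every maximum matching of $G[D]$. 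Hence $\mu(G[D-y])=\mu(G[D])-1=|N_{G}(A)|-1=|M_{D}'|$, so $M_{D}'$ is maximum. The care required here is that Theorem \ref{th11} is stated for an ambient graph, so I must apply it to the induced K\"onig-Egerv\'ary graph $G[D]$ and check that neighborhoods and the guaranteed matching transfer faithfully; they do, exactly because $D=N_{G}[A]$.
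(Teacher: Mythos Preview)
Your argument is correct and follows essentially the same route as the paper: both use the partition $V(G)=V(C)\cup N_G[A]$ from Proposition~\ref{prop15} and Theorem~\ref{conj1}, invoke Theorem~\ref{th18} to pin down $M\cap E(C)$ and the unique unsaturated cycle vertex, and then split into the same two cases according to whether that vertex is $M$-saturated, using the count $|M|-|M\cap E(C)|=|N_G(A)|$ to verify maximality of the residual matching. The only cosmetic difference is in Case~2: you certify $\mu(G[D-y])=|N_G(A)|-1$ by applying Theorem~\ref{th11} inside $G[D]$ to make $y$ $\mu$-critical there, whereas the paper reaches the same conclusion directly from the observation that $A$ is independent, so every edge of $G[N_G[A]-y]$ meets $N_G(A)-\{y\}$.
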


\begin{proof}
Let $A\in\mathrm{MaxCritIndep}(G)$. By Theorem \ref{th333}\textit{(iii)},
there exists a matching, say $M_{0}$, from $N_{G}\left(  A\right)  $ into $A$.

Since $A$ is critical, we know that $\left\vert A\right\vert -\left\vert
N_{G}\left(  A\right)  \right\vert =d\left(  G\right)  $.

Suppose that $M$ is a maximum matching of $G$. By Theorem \ref{th18},
$\left\vert M\cap E\left(  C\right)  \right\vert =\left\lfloor \frac{V\left(
C\right)  }{2}\right\rfloor $. Hence, there is only one vertex belonging to
$V\left(  C\right)  $, say $x$, that is not saturated by $M_{1}=M\cap E\left(
C\right)  $.

By Proposition \ref{prop15}\ and Theorem \ref{conj1}, we get$\ \left\vert
A\right\vert +\left\vert N_{G}\left(  A\right)  \right\vert =n\left(
G\right)  -\left\vert V\left(  C\right)  \right\vert $, because $N_{G}\left[
A\right]  =N_{G}\left[  \mathrm{diadem}\left(  G\right)  \right]  $ according
to Theorem \ref{th333}\textit{(ii) }and\textit{ }Theorem \ref{th100}%
\textit{(i).}

Thus
\[
\left\vert M_{0}\right\vert =\left\vert N_{G}\left(  A\right)  \right\vert
=\frac{n\left(  G\right)  -\left\vert V\left(  C\right)  \right\vert -d\left(
G\right)  }{2}.
\]

By Proposition \ref{prop15}, $M_{0}\cap M_{1}=\emptyset$. Therefore,
$M_{0}\cup M_{1}$ is a matching of $G$ and
\[
\left\vert M_{0}\cup M_{1}\right\vert =\left\vert M_{0}\right\vert +\left\vert
M_{1}\right\vert =\frac{n\left(  G\right)  -d\left(  G\right)  -\left\vert
V\left(  C\right)  \right\vert }{2}+\frac{\left\vert V\left(  C\right)
\right\vert -1}{2}=\frac{n\left(  G\right)  -d\left(  G\right)  -1}{2}%
\]
By Corollary \ref{cor8}, we infer that $M_{2}=M_{0}\cup M_{1}$ is a maximum
matching of $G$, because $\left\vert M_{2}\right\vert =\mu\left(  G\right)  $.
On the other hand,
\begin{gather}
\left\vert M\right\vert -\left\vert M_{1}\right\vert =\mu\left(  G\right)
-\left\lfloor \frac{V\left(  C\right)  }{2}\right\rfloor =\tag{*}\\
\frac{n\left(  G\right)  -d\left(  G\right)  -1}{2}-\frac{\left\vert V\left(
C\right)  \right\vert -1}{2}=\frac{n\left(  G\right)  -\left\vert V\left(
C\right)  \right\vert -d\left(  G\right)  }{2}=\left\vert N_{G}\left(
A\right)  \right\vert .\nonumber
\end{gather}

Clearly, $x$ is not saturated by $M_{2}$.

\textit{Case 1.} The vertex $x$ is not saturated by $M$. Hence, by Proposition
\ref{prop15} and Theorem \ref{conj1}, $M-M_{1}\subseteq\left(  N_{G}\left(
A\right)  ,A\right)  $. Since $A$ is an independent set, the equality (*)
ensures that $M-M_{1}$ is a maximum matching in $N_{G}\left[  A\right]  $.

\textit{Case 2.} The vertex $x$ is saturated by $M$. By Proposition
\ref{prop15}, $y=M\left(  x\right)  \in N_{G}\left(  A\right)  $. Hence, by
Proposition \ref{prop15} and Theorem \ref{conj1}, $M-M_{1}-\left\{
xy\right\}  \subseteq\left(  N_{G}\left(  A\right)  ,A\right)  $. Therefore,
$M-M_{1}-\left\{  xy\right\}  $ is a maximum matching in $G\left[
N_{G}\left[  A\right]  -y\right]  $, because $\left\vert M\right\vert
-\left\vert M_{1}\right\vert -1=\left\vert N_{G}\left(  A\right)  \right\vert
-1$ in accordance with the equality (*).
\end{proof}

Combining Theorem \ref{conj1} and Theorem \ref{th44}\emph{(i)} we get the following.

\begin{corollary}
\label{cor3}If $G$ is an almost bipartite non-K\"{o}nig-Egerv\'{a}ry graph
with the unique odd cycle $C$, then%
\[
V(C)\cup N_{G}\left[  \mathrm{diadem}\left(  G\right)  \right]
=\mathrm{corona}\left(  G\right)  \cup N_{G}\left(  \mathrm{core}\left(
G\right)  \right)  .
\]

\end{corollary}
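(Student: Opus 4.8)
The plan is to observe that both sides of the claimed identity are in fact equal to the whole vertex set $V(G)$, so the statement reduces to stringing together two already-established results. First I would invoke Theorem \ref{conj1}, which asserts precisely that $V(C)\cup N_{G}\left[\mathrm{diadem}(G)\right]=V(G)$ for an almost bipartite non-K\"{o}nig-Egerv\'{a}ry graph $G$ with unique odd cycle $C$; this pins down the left-hand side. Next I would invoke Theorem \ref{th44}\emph{(i)}, which gives $\mathrm{corona}(G)\cup N_{G}(\mathrm{core}(G))=V(G)$ under the same hypotheses; this pins down the right-hand side. Chaining the two equalities through $V(G)$ yields the desired conclusion.

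Since each union has already been shown to exhaust $V(G)$, there is essentially no nontrivial step remaining: the corollary is immediate once the two structural theorems are in place. The only point worth checking is that the hypotheses of both cited results coincide with the standing assumption of the corollary --- namely that $G$ is almost bipartite, non-K\"{o}nig-Egerv\'{a}ry, and carries a unique odd cycle $C$ --- which is indeed the case. Thus no further case analysis, matching argument, or appeal to the critical-set machinery is required at this stage.

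Concretely, I would write the proof as the two-line chain $V(C)\cup N_{G}\left[\mathrm{diadem}(G)\right]=V(G)=\mathrm{corona}(G)\cup N_{G}(\mathrm{core}(G))$, citing Theorem \ref{conj1} for the first equality and Theorem \ref{th44}\emph{(i)} for the second. The main (and genuinely the only) obstacle is purely bibliographic, in the sense that the substance of the result lives entirely in Theorem \ref{conj1}: all the real work --- the reduction to the $D_{y}$ decomposition, the criticality-enlargement argument showing the residual set $A$ must be empty, and the disjointness of $V(C)$ from $N_{G}\left[\mathrm{diadem}(G)\right]$ --- was carried out there. Consequently I expect this final corollary to require at most a two-sentence proof.
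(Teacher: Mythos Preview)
Your proposal is correct and matches the paper's approach exactly: the paper simply states that the corollary follows by combining Theorem \ref{conj1} and Theorem \ref{th44}\emph{(i)}, which is precisely your chain $V(C)\cup N_{G}[\mathrm{diadem}(G)]=V(G)=\mathrm{corona}(G)\cup N_{G}(\mathrm{core}(G))$. No additional argument is given or needed.
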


\begin{proposition}
\cite{LevMan2022}\label{prop10} If $G$ is an almost bipartite
non-K\"{o}nig-Egerv\'{a}ry graph, then $\mathrm{core}\left(  G\right)  \cap
N_{G}\left[  V(C)\right]  =\emptyset$ (or, equivalently, $N_{G}\left[
\mathrm{core}\left(  G\right)  \right]  \cap V(C)=\emptyset$ by Corollary
\ref{cor11}\textit{)}.
\end{proposition}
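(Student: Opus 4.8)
The plan is to identify $\mathrm{core}(G)$ as a critical independent set and then invoke the partition already established in Proposition~\ref{prop15}. First I would note that $\mathrm{core}(G)=\bigcap\{S:S\in\Omega(G)\}$ is a subset of every maximum independent set, hence is itself independent. By Lemma~\ref{lem7}\emph{(ii)}, $\mathrm{core}(G)$ is moreover critical, so $\mathrm{core}(G)$ is a critical independent set of $G$. Since $\mathrm{diadem}(G)$ is by definition the union of all critical independent sets, this immediately gives the inclusion $\mathrm{core}(G)\subseteq\mathrm{diadem}(G)$.

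The core of the argument is then a one-line consequence of Proposition~\ref{prop15}, which asserts $N_{G}\left[V(C)\right]\cap\mathrm{diadem}(G)=\emptyset$ whenever $G$ is almost bipartite non-K\"{o}nig-Egerv\'{a}ry with unique odd cycle $C$. Intersecting this empty set with the smaller set $\mathrm{core}(G)$ yields
\[
\mathrm{core}(G)\cap N_{G}\left[V(C)\right]\subseteq\mathrm{diadem}(G)\cap N_{G}\left[V(C)\right]=\emptyset,
\]
which is exactly the first asserted equality.

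For the equivalent formulation I would unfold the closed neighborhoods. The equality $\mathrm{core}(G)\cap N_{G}\left[V(C)\right]=\emptyset$ splits into $\mathrm{core}(G)\cap V(C)=\emptyset$ together with $\mathrm{core}(G)\cap N_{G}(V(C))=\emptyset$, whereas $N_{G}\left[\mathrm{core}(G)\right]\cap V(C)=\emptyset$ splits into the same first condition together with $N_{G}(\mathrm{core}(G))\cap V(C)=\emptyset$. Applying Corollary~\ref{cor11} with $A=\mathrm{core}(G)$ and $B=V(C)$ shows that the two cross-conditions coincide, so the two formulations are equivalent. I do not anticipate a genuine obstacle here, since the substantive work has been front-loaded into Lemma~\ref{lem7} and Proposition~\ref{prop15}; the only point demanding attention is that the passage to $\mathrm{diadem}(G)$ requires $\mathrm{core}(G)$ to be \emph{both} independent and critical, and independence is automatic because $\mathrm{core}(G)$ is an intersection of independent sets while criticality is supplied by Lemma~\ref{lem7}\emph{(ii)}.
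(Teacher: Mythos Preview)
Your argument is correct and non-circular: Lemma~\ref{lem7}\emph{(ii)} gives that $\mathrm{core}(G)$ is critical, independence is automatic, hence $\mathrm{core}(G)\subseteq\mathrm{diadem}(G)$, and Proposition~\ref{prop15} then finishes the job; the equivalence via Corollary~\ref{cor11} is handled exactly as the paper intends.

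Note, however, that the paper does not supply its own proof of this proposition: it is quoted from \cite{LevMan2022} as a prior result, so there is nothing to compare against in the present text. Your route is in fact a \emph{re-derivation} of the cited result using the newer Proposition~\ref{prop15} proved here; this is logically sound because the proof of Proposition~\ref{prop15} relies only on Theorem~\ref{th11}, Lemma~\ref{lem84}, Theorem~\ref{th17}, Lemma~\ref{lem7}, and Corollary~\ref{cor11}, none of which invoke Proposition~\ref{prop10}. What you gain is economy within this paper---Proposition~\ref{prop10} becomes an immediate corollary rather than an external import---whereas the original proof in \cite{LevMan2022} presumably proceeds without access to Proposition~\ref{prop15} and therefore by a more direct argument on $\mathrm{core}(G)$ itself.
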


\begin{corollary}
If $G$ is an almost bipartite non-K\"{o}nig-Egerv\'{a}ry graph, then
$V(C)\subseteq\mathrm{corona}\left(  G\right)  $, and, consequently,
$\left\vert V(C)\right\vert \leq2\alpha\left(  G\right)  +1-\xi\left(
G\right)  $.
\end{corollary}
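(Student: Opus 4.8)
The plan is to combine Corollary \ref{cor3} with Proposition \ref{prop10} to pin down which block of the partition $V(C)$ can fall into, and then read off the cardinality bound directly from Theorem \ref{th44}\emph{(iii)}. The whole argument is a short chain of set inclusions followed by one cardinality identity, so I expect no serious obstacle.

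First I would start from Corollary \ref{cor3}, which gives $V(C)\cup N_G[\mathrm{diadem}(G)]=\mathrm{corona}(G)\cup N_G(\mathrm{core}(G))$. Since $V(C)$ sits inside the left-hand side, every $v\in V(C)$ satisfies $v\in\mathrm{corona}(G)\cup N_G(\mathrm{core}(G))$, so it suffices to rule out the possibility $v\in N_G(\mathrm{core}(G))$. This is exactly what Proposition \ref{prop10} supplies: $N_G[\mathrm{core}(G)]\cap V(C)=\emptyset$, and since $N_G(\mathrm{core}(G))\subseteq N_G[\mathrm{core}(G)]$, no vertex of $V(C)$ can lie in $N_G(\mathrm{core}(G))$. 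Hence $V(C)\subseteq\mathrm{corona}(G)$, which establishes the first assertion.

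For the inequality I would invoke Theorem \ref{th44}\emph{(iii)}, namely $|\mathrm{core}(G)|+|\mathrm{corona}(G)|=2\alpha(G)+1$, together with the definition $\xi(G)=|\mathrm{core}(G)|$ recalled in the introduction. Rearranging yields $|\mathrm{corona}(G)|=2\alpha(G)+1-\xi(G)$, and the containment $V(C)\subseteq\mathrm{corona}(G)$ proved above then gives $|V(C)|\leq|\mathrm{corona}(G)|=2\alpha(G)+1-\xi(G)$, as required.

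The only point demanding any care is the intersection step: one must check that $V(C)$ avoids $N_G(\mathrm{core}(G))$ and not merely $\mathrm{core}(G)$ itself. But this is immediate once Proposition \ref{prop10} is read in its $N_G[\mathrm{core}(G)]\cap V(C)=\emptyset$ form, since that closed neighborhood already absorbs $N_G(\mathrm{core}(G))$. Thus all the substantive work was done in the preceding partition and core/corona cardinality results, and this corollary is essentially a bookkeeping consequence of them.
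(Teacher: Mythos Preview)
Your proof is correct and follows essentially the same approach as the paper: combine Corollary \ref{cor3} with Proposition \ref{prop10} to obtain $V(C)\subseteq\mathrm{corona}(G)$, and then read off the bound from Theorem \ref{th44}\emph{(iii)}. The paper's proof is just a terser statement of exactly this chain of reasoning.
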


\begin{proof}
An interaction between Corollary \ref{cor3} and Proposition \ref{prop10} takes
us to the inclusion $V(C)\subseteq\emph{corona}\left(  G\right)  $. Hence, by
Theorem \ref{th44}\emph{(iii)}, we obtain%
\[
\left\vert V(C)\right\vert \leq2\alpha\left(  G\right)  +1-\xi\left(
G\right)  \text{,}%
\]
as required.
\end{proof}

\section{Equalities and inequalities involving $\varrho_{v}\left(  G\right)
$}

\begin{proposition}
For every vertex $v\in V\left(  G\right)  $, if there exists a critical
independent set in $G-v$ that is not critical in $G$, then $d\left(  G\right)
\geq d\left(  G-v\right)  $.
\end{proposition}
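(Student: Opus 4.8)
The plan is to exhibit the required inequality directly from the one critical set whose existence is assumed, by comparing its difference computed in $G-v$ against its difference computed in $G$. Let $S$ be a critical independent set of $G-v$ that is not critical in $G$. First I would record two structural facts that cost nothing: since $S\subseteq V(G)\setminus\{v\}$ and $G-v$ is an induced subgraph, $S$ is independent in $G$ as well (no edge among the vertices of $S$ can appear in $G$ that was absent in $G-v$, as $v\notin S$); and the neighbourhoods are related by $N_{G-v}(S)=N_{G}(S)\setminus\{v\}$.

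The engine of the proof is the elementary observation that deleting a single vertex shrinks the neighbourhood of $S$ by at most one element. Concretely, from $N_{G-v}(S)=N_{G}(S)\setminus\{v\}$ I get $\left|N_{G-v}(S)\right|\ge\left|N_{G}(S)\right|-1$, whence
\[
d_{G-v}(S)=\left|S\right|-\left|N_{G-v}(S)\right|\le\left|S\right|-\left|N_{G}(S)\right|+1=d_{G}(S)+1 .
\]
This single inequality carries the whole argument; the $+1$ appears exactly when $v\in N_{G}(S)$, and otherwise the two differences coincide.

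Next I would feed in the two hypotheses. Because $S$ is critical in $G-v$, its difference attains the maximum there, so $d(G-v)=d_{G-v}(S)$. Because $S$ is independent in $G$ but \emph{not} critical in $G$, its difference falls strictly below the maximum, i.e. $d_{G}(S)<d(G)$, which for integer-valued differences means $d_{G}(S)\le d(G)-1$. Chaining these with the displayed inequality yields
\[
d(G-v)=d_{G-v}(S)\le d_{G}(S)+1\le\bigl(d(G)-1\bigr)+1=d(G),
\]
which is the claim.

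The substantive point, and the only place where one must be attentive rather than mechanical, is the interplay between the $+1$ in the neighbourhood estimate and the strictness gained from non-criticality in $G$: the hypothesis that $S$ fails to be critical in $G$ is precisely what supplies the $-1$ needed to absorb that $+1$. If $S$ were critical in $G$ the bound would only give $d(G-v)\le d(G)+1$, so the non-criticality assumption is used in an essential way and cannot be dropped. I expect no further obstacle; the remaining verifications (independence of $S$ in $G$ and the neighbourhood identity) are immediate from the definitions already in the paper.
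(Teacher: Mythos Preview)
Your proof is correct and follows essentially the same route as the paper: both arguments take the assumed set $S$ (the paper calls it $B$), observe that $d_{G-v}(S)\le d_G(S)+1$ via the neighbourhood inclusion $N_{G-v}(S)=N_G(S)\setminus\{v\}$, and then use non-criticality in $G$ to convert the strict inequality $d_G(S)<d(G)$ into the extra unit needed to close the chain $d(G-v)=d_{G-v}(S)\le d_G(S)+1\le d(G)$.
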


\begin{proof}
Let $B$ be an independent set in $G-v$. Hence,
\[
d_{G}\left(  B\right)  =\left\vert B\right\vert -\left\vert N_{G}\left(
B\right)  \right\vert \leq\left\vert B\right\vert -\left\vert N_{G-v}\left(
B\right)  \right\vert =d_{G-v}\left(  B\right)  \leq d_{G}\left(  B\right)
+1,
\]
because
\[
\left\vert N_{G}\left(  B\right)  \right\vert \geq\left\vert N_{G-v}\left(
B\right)  \right\vert \geq\left\vert N_{G}\left(  B\right)  \right\vert -1.
\]

Suppose that $B$ is a critical independent set in $G-v$, i.e., $d_{G-v}\left(
B\right)  =d\left(  G-v\right)  $.

If $B$ is not critical in $G$, then $d\left(  G\right)  >d_{G}\left(
B\right)  $. Consequently, we get
\[
d\left(  G\right)  \geq d_{G}\left(  B\right)  +1\geq d_{G-v}\left(  B\right)
=d\left(  G-v\right)  ,
\]
as claimed.
\end{proof}

\begin{theorem}
\cite{Larson2007}\label{th3} Each critical independent set is contained in
some maximum critical independent set.
\end{theorem}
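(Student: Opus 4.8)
The plan is to fix an arbitrary critical independent set $A$ and a maximum critical independent set $S$, whose existence is guaranteed because the critical independent sets form a nonempty finite family, and then to enlarge $A$ one vertex at a time until it reaches the cardinality $\alpha^{\prime}(G)=\left\vert S\right\vert$. We may assume $\left\vert A\right\vert <\alpha^{\prime}(G)$, as otherwise $A$ is already a maximum critical independent set. By Theorem \ref{th444}(ii) the set $A\cup S$ is critical (though possibly not independent), so $d(A\cup S)=d(G)$; writing $\left\vert A\cup S\right\vert -\left\vert N_{G}(A\cup S)\right\vert =d(G)$ with $N_{G}(A\cup S)=N_{G}(A)\cup N_{G}(S)$ and $\left\vert A\right\vert -d(G)=\left\vert N_{G}(A)\right\vert$ yields the key relation $\left\vert N_{G}(S)\setminus N_{G}(A)\right\vert =\left\vert S\setminus A\right\vert$. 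Writing $P=A\setminus S$ and $Q=S\setminus A$, the same computation gives $\left\vert Q\right\vert -\left\vert P\right\vert =\left\vert S\right\vert -\left\vert A\right\vert >0$, and since $A$ and $S$ are independent every edge of $G[A\cup S]$ joins $P$ to $Q$.

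The next step extracts a matching that records, for each vertex of $Q$, a private neighbor outside $N_{G}(A)$. By Theorem \ref{th333}(iii) there is a matching from $N_{G}(S)$ into $S$. No vertex $w\in N_{G}(S)\setminus N_{G}(A)$ can be matched into $A\cap S$, for that would put $w$ in $N_{G}(A)$; hence the matching sends $N_{G}(S)\setminus N_{G}(A)$ into $Q$, and by the key relation these two sets have the same cardinality, so the matching restricts to a bijection between them. Thus each $q\in Q$ acquires a distinct partner $w_{q}\in N_{G}(S)\setminus N_{G}(A)$ with $q\,w_{q}\in E(G)$, and because $N_{G}(A\cap S)\subseteq N_{G}(A)$ these partners exhaust $N_{G}(Q)\setminus N_{G}(A)=\{w_{q}:q\in Q\}$.

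It remains to select vertices to adjoin to $A$. I would look for a nonempty $T\subseteq Q$ that is non-adjacent to $P$ and satisfies $N_{G}(T)\setminus N_{G}(A)=\{w_{q}:q\in T\}$; for such a $T$ the set $A\cup T$ is independent and $d(A\cup T)=d(G)+\left\vert T\right\vert -\left\vert N_{G}(T)\setminus N_{G}(A)\right\vert =d(G)$, so $A\cup T$ is a critical independent set properly containing $A$, and iterating produces a maximum critical independent set that contains $A$. The main obstacle is precisely this selection, because the two requirements pull against each other: keeping $A\cup T$ independent forbids any $q\in T$ from being adjacent to $P$ (the vertices of $P=A\setminus S$ lie in $A$ and may not be deleted), while keeping the difference equal to $d(G)$ forbids any $q\in T$ from being adjacent to a partner $w_{q^{\prime}}$ with $q^{\prime}\notin T$. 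Reconciling these is an alternating-path argument in the bipartite graph $G[A\cup S]$: starting from the surplus in $Q$ forced by $\left\vert Q\right\vert >\left\vert P\right\vert$ and using the tightness $N_{G}(A\cap S)=N_{G}(A)\cap N_{G}(S)$ together with the bijection $q\mapsto w_{q}$ above, one traces a ``matching-closed'' set $T$ disjoint from the neighborhood of $P$, which is the heart of the proof.
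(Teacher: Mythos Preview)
The paper does not supply its own proof of this statement; it is quoted from \cite{Larson2007} (the same result already appears as Theorem~\ref{th333}\emph{(ii)}). So there is no in-paper argument to compare against, and your proposal stands or falls on its own.

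Your setup is largely correct. One quibble: you invoke Theorem~\ref{th444}\emph{(ii)} to conclude $d(A\cup S)=d(G)$ for a set $A\cup S$ that may fail to be independent, whereas in this paper ``critical'' is defined only for independent sets. The desired identity does hold, via submodularity of $X\mapsto\left\vert N_{G}(X)\right\vert$ together with the fact that $\max_{X\subseteq V}d(X)$ is already attained on independent sets, but that deserves a sentence rather than a bare citation. Granting this, your derivation of $\left\vert N_{G}(S)\setminus N_{G}(A)\right\vert=\left\vert Q\right\vert$ and of the bijection $q\mapsto w_{q}$ via Theorem~\ref{th333}\emph{(iii)} is clean and correct.

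The genuine gap is exactly where you say it is. You reduce everything to finding a nonempty $T\subseteq Q$ that avoids $N_{G}(P)$ and is ``matching-closed'' in the sense $N_{G}(T)\setminus N_{G}(A)=\{w_{q}:q\in T\}$, and then assert that ``an alternating-path argument'' in $G[A\cup S]$ produces such a $T$. But you never carry this out, and it is not routine: the closure of a vertex of $Q\setminus N_{G}(P)$ under the relation $q\to q'$ (meaning $q$ is adjacent to $w_{q'}$) can in principle enter $Q\cap N_{G}(P)$, and nothing you have written excludes this. A complete argument here would need the symmetric identity $\left\vert N_{G}(P)\setminus N_{G}(S)\right\vert=\left\vert P\right\vert$ (which follows from $d(A)=d(A\cap S)=d(A\cup S)=d(S)$) and the corresponding perfect matching on the $P$ side, and then a genuine comparison of the two matchings to exhibit $T$. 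Until that step is actually written down, what you have is a plausible outline rather than a proof.
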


\begin{proposition}
If $v\in\mathrm{nucleus}\left(  G\right)  $ and there exists an independent
set that is critical in both $G$ and $G-v$, then $d\left(  G\right)  =d\left(
G-v\right)  $.
\end{proposition}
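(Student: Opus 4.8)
The plan is to use one independent set that is simultaneously critical in $G$ and in $G-v$, which the hypothesis provides, and to show that deleting $v$ leaves its neighborhood untouched; membership of $v$ in $\mathrm{nucleus}(G)$ is exactly what forces this. Let $B$ be the independent set that is critical in both $G$ and $G-v$, so that $d_G(B)=d(G)$ and $d_{G-v}(B)=d(G-v)$. Since $N_{G-v}(B)\subseteq N_G(B)$, we get $d_G(B)\le d_{G-v}(B)$ for free, i.e. $d(G)\le d(G-v)$; the remaining work is to upgrade this to an equality.

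The decisive step is to prove that $v$ has no neighbor inside $B$. I would first apply Theorem \ref{th3} to embed the critical set $B$ into some maximum critical independent set $A\in\mathrm{MaxCritIndep}(G)$. Because $v\in\mathrm{nucleus}(G)=\bigcap\{S:S\in\mathrm{MaxCritIndep}(G)\}$ and $A\in\mathrm{MaxCritIndep}(G)$, the vertex $v$ belongs to this same $A$. As $A$ is independent and contains both $v$ and every vertex of $B$, no edge can join $v$ to a vertex of $B$; hence $v\notin N_G(B)$.

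Once $v\notin N_G(B)$ is secured, the rest is routine neighborhood bookkeeping: removing $v$ deletes no vertex of $N_G(B)$, so $N_{G-v}(B)=N_G(B)$ and therefore $d_{G-v}(B)=|B|-|N_{G-v}(B)|=|B|-|N_G(B)|=d_G(B)$. Combining the criticality identities gives $d(G-v)=d_{G-v}(B)=d_G(B)=d(G)$, which is the claim.

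The main obstacle, really the only substantive point, is placing $v$ and $B$ together inside one independent set; after that the conclusion is forced. The subtle part is matching the two quantifiers correctly: Theorem \ref{th3} hands us \emph{some} maximum critical set containing $B$, while the nucleus is the intersection over \emph{all} of them, so $v$ is guaranteed to lie in precisely the set produced. This is the step where the hypothesis $v\in\mathrm{nucleus}(G)$, as opposed to the weaker $v\in\mathrm{diadem}(G)$, is indispensable.
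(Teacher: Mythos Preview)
Your proof is correct and follows essentially the same approach as the paper's: both embed $B$ in a maximum critical independent set $A$ via Theorem~\ref{th3}, use $v\in\mathrm{nucleus}(G)\subseteq A$ and the independence of $A$ to conclude $v\notin N_G(B)$, and then read off $N_{G-v}(B)=N_G(B)$ to equate the differences. The paper adds the incidental remark that $B$ cannot itself be a maximum critical independent set (since $v\notin B$), but this observation plays no essential role in either argument.
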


\begin{proof}
If $B$ is critical in $G$, then $B$ is not a maximum critical independent set
in $G$, because $v\in\mathrm{nucleus}\left(  G\right)  $, while $v\notin B$.
By Theorem \ref{th3}, there exists some $A\in\mathrm{MaxCritIndep}(G)$, such
that $B\subset A$. Since $v\in\mathrm{nucleus}\left(  G\right)  \subseteq A$,
we infer that $v\notin N_{G}\left(  B\right)  $, because $A$ is independent.
Therefore,
\[
d\left(  G-v\right)  =d_{G-v}\left(  B\right)  =\left\vert B\right\vert
-\left\vert N_{G-v}\left(  B\right)  \right\vert =\left\vert B\right\vert
-\left\vert N_{G}\left(  B\right)  \right\vert =d_{G}\left(  B\right)
=d\left(  G\right)  ,
\]
because $N_{G}\left(  B\right)  =N_{G-v}\left(  B\right)  $.
\end{proof}

\begin{lemma}
\label{lem8}If $v\notin N_{G}\left(  \mathrm{diadem}\left(  G\right)  \right)
$, then $d\left(  G\right)  \geq d\left(  G-v\right)  $.
\end{lemma}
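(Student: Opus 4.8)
The plan is to argue by contradiction, exploiting the same elementary neighborhood bound used in the preceding propositions together with the defining property of $\mathrm{diadem}(G)$ as the union of all critical independent sets. Assume that $d(G) < d(G-v)$, and let $B$ be any critical independent set of $G-v$, so that $d_{G-v}(B) = d(G-v)$. The starting observation is that $B \subseteq V(G) - \{v\}$ is still an independent set of $G$, hence $d_G(B) \leq d(G)$ by the definition of the critical difference. Moreover, deleting $v$ can only remove $v$ itself from the neighborhood of $B$, so $N_{G-v}(B) = N_G(B) - \{v\}$, and therefore $d_{G-v}(B) - d_G(B) \in \{0,1\}$, with the value $1$ occurring precisely when $v \in N_G(B)$.

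Next I would pinch these inequalities together. From $d_{G-v}(B) \leq d_G(B) + 1 \leq d(G) + 1$ combined with the contradiction hypothesis $d(G) + 1 \leq d(G-v) = d_{G-v}(B)$, every inequality in the chain must be an equality. In particular $d_G(B) = d(G)$, which says that $B$ is a critical independent set of $G$, and $d_{G-v}(B) = d_G(B) + 1$, which forces $v \in N_G(B)$ by the neighborhood computation above.

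Finally I would invoke the definition of the diadem. Since $B$ is critical in $G$, we have $B \subseteq \mathrm{diadem}(G)$, and consequently $v \in N_G(B) \subseteq N_G(\mathrm{diadem}(G))$. This contradicts the hypothesis $v \notin N_G(\mathrm{diadem}(G))$, so the assumption $d(G) < d(G-v)$ is untenable and $d(G) \geq d(G-v)$ follows. The only delicate point—really the crux of the argument—is recognizing that $d(G) < d(G-v)$ can hold only in the borderline situation where $B$ is simultaneously critical in $G$ \emph{and} has $v$ as a neighbor; once this is isolated by the squeezing step, the membership $B \subseteq \mathrm{diadem}(G)$ is immediate and the contradiction closes. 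No auxiliary results beyond the elementary neighborhood bound and the definition of $\mathrm{diadem}(G)$ are needed.
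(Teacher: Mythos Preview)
Your proof is correct and follows essentially the same approach as the paper: both arguments rest on the elementary bound $d_{G-v}(B)\le d_G(B)+1$ (with equality exactly when $v\in N_G(B)$) together with the observation that $v\notin N_G(\mathrm{diadem}(G))$ prevents $B$ from being critical in $G$ when $v\in N_G(B)$. The only difference is cosmetic: the paper gives a direct case analysis over all independent $B$ in $G-v$, whereas you package the same computation as a proof by contradiction focusing on a single critical $B$; the underlying logic is identical.
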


\begin{proof}
For every independent set $B$ in $G-v$:

\begin{itemize}
\item $v\notin N_{G}\left(  B\right)  $ implies
\[
d_{G-v}\left(  B\right)  \,=\left\vert B\right\vert -\left\vert N_{G-v}\left(
B\right)  \right\vert =\left\vert B\right\vert -\left\vert N_{G}\left(
B\right)  \right\vert =d_{G}\left(  B\right)  \leq d\left(  G\right)  ,
\]
because $N_{G-v}\left(  B\right)  =N_{G}\left(  B\right)  $;

\item $v\in N_{G}\left(  B\right)  $ implies
\[
d_{G}\left(  B\right)  +1=\left\vert B\right\vert -\left\vert N_{G}\left(
B\right)  \right\vert +1=\left\vert B\right\vert -\left\vert N_{G-v}\left(
B\right)  \right\vert =d_{G-v}\left(  B\right)  ,
\]
because $\left\vert N_{G-v}\left(  B\right)  \right\vert +1=\left\vert
N_{G}\left(  B\right)  \right\vert $. The fact that $v\notin N_{G}\left(
\mathrm{diadem}\left(  G\right)  \right)  $ means that $v$ does not belong to
a neighborhood of any critical independent set, i.e., $B$ is not critical in
$G$. Thus $d\left(  G\right)  \geq d_{G}\left(  B\right)  +1=d_{G-v}\left(
B\right)  $.
\end{itemize}

Consequently, $d\left(  G\right)  \geq d\left(  G-v\right)  $ in the above two cases.
\end{proof}

It worth mentioning that there are graphs with $v\in N_{G}\left(
\mathrm{diadem}\left(  G\right)  \right)  $ and $d\left(  G\right)  <d\left(
G-v\right)  $. For instance, consider the graph $G_{2}$ from Figure
\ref{fig1}: $\mathrm{diadem}\left(  G_{2}\right)  =\left\{  y,z\right\}  $,
$d(G)=1>d(G-y)=0$, while $x\in N_{G}\left(  \mathrm{diadem}\left(  G\right)
\right)  $\ and $d(G)=1<d(G-x)=2$.

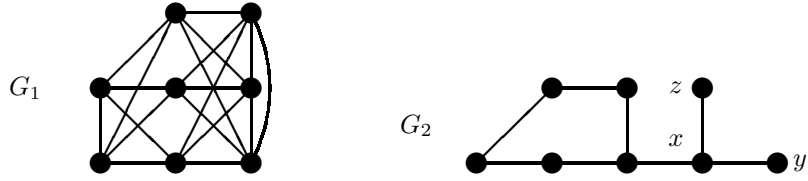
\begin{figure}[h]
\setlength{\unitlength}{1cm}\begin{picture}(5,2.2)\thicklines
\multiput(3,0)(1,0){3}{\circle*{0.29}}
\multiput(3,1)(1,0){3}{\circle*{0.29}}
\multiput(4,2)(1,0){2}{\circle*{0.29}}
\put(3,0){\line(1,0){2}}
\put(3,0){\line(0,1){1}}
\put(3,0){\line(1,1){1}}
\put(3,0){\line(1,2){1}}
\put(3,1){\line(1,1){1}}
\put(3,1){\line(1,-1){1}}
\put(3,1){\line(1,0){1}}
\put(4,0){\line(1,1){1}}
\put(4,0){\line(1,2){1}}
\put(4,1){\line(1,1){1}}
\put(4,1){\line(1,-1){1}}
\put(4,1){\line(1,0){1}}
\put(4,2){\line(1,0){1}}
\put(4,2){\line(1,-1){1}}
\put(4,2){\line(1,-2){1}}
\put(5,0){\line(0,1){2}}
\qbezier(5,0)(5.5,1)(5,2)
\put(2,1){\makebox(0,0){$G_{1}$}}
\multiput(8,0)(1,0){5}{\circle*{0.29}}
\multiput(9,1)(1,0){3}{\circle*{0.29}}
\put(8,0){\line(1,0){4}}
\put(8,0){\line(1,1){1}}
\put(9,1){\line(1,0){1}}
\put(10,0){\line(0,1){1}}
\put(11,0){\line(0,1){1}}
\put(10.65,0.3){\makebox(0,0){$x$}}
\put(12.3,0){\makebox(0,0){$y$}}
\put(10.65,1){\makebox(0,0){$z$}}
\put(7.2,0.5){\makebox(0,0){$G_{2}$}}
\end{picture}\caption{$1$-K\"{o}nig-Egerv\'{a}ry graphs.}%
\label{fig1}%
\end{figure}

\begin{proposition}
\cite{LevMan2024a}\label{prop11} If $A$ is a critical independent set in $G$,
then $\mathrm{core}\left(  G\right)  \cap N\left(  A\right)  =\emptyset$ and,
consequently, $\mathrm{core}\left(  G\right)  \cap N\left(  \mathrm{diadem}%
\left(  G\right)  \right)  =\emptyset$.
\end{proposition}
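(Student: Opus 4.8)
The plan is to argue by contradiction, combining the fact that every critical independent set is embedded in some maximum independent set (Theorem \ref{th333}\emph{(i)}) with the defining property of $\mathrm{core}(G)$ as the intersection of all maximum independent sets.

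First I would fix a critical independent set $A$ and assume, for the sake of contradiction, that some vertex $v$ lies in $\mathrm{core}(G)\cap N(A)$. From $v\in N(A)$ there is a vertex $a\in A$ with $va\in E(G)$. By Theorem \ref{th333}\emph{(i)}, the set $A$ is contained in some $S\in\Omega(G)$, so in particular $a\in S$. At the same time, $v\in\mathrm{core}(G)=\bigcap\{S:S\in\Omega(G)\}$ forces $v\in S$. Thus the adjacent vertices $v$ and $a$ both belong to the independent set $S$, which is impossible. Hence $\mathrm{core}(G)\cap N(A)=\emptyset$.

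For the second assertion I would pass to the union over all critical independent sets. Since $\mathrm{diadem}(G)=\bigcup\{A:A\text{ is a critical independent set}\}$ and the neighborhood of a set distributes over unions, that is $N\!\left(\bigcup_{A}A\right)=\bigcup_{A}N(A)$, applying the first part to each $A$ yields
\[
\mathrm{core}(G)\cap N(\mathrm{diadem}(G))=\mathrm{core}(G)\cap\bigcup_{A}N(A)=\bigcup_{A}\left(\mathrm{core}(G)\cap N(A)\right)=\emptyset .
\]

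I do not expect any genuine obstacle here: the whole argument hinges on Theorem \ref{th333}\emph{(i)} together with the definition of the core. The only points meriting a line of justification are the elementary identity $N\!\left(\bigcup_{A}A\right)=\bigcup_{A}N(A)$, which is immediate from the definition of $N(\cdot)$, and the inclusion $\mathrm{core}(G)\subseteq S$ for every $S\in\Omega(G)$, which is just the definition of $\mathrm{core}(G)$. Degenerate cases, such as an empty critical set or an empty core, make both intersections empty trivially and so require no separate treatment.
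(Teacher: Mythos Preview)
Your argument is correct: invoking Theorem \ref{th333}\emph{(i)} to embed the critical independent set $A$ into a maximum independent set $S$, and then using $\mathrm{core}(G)\subseteq S$ to derive the contradiction, is exactly the natural route, and your deduction of the second claim via $N\bigl(\bigcup_A A\bigr)=\bigcup_A N(A)$ is clean. Note that the paper does not supply its own proof of this proposition but merely quotes it from \cite{LevMan2024a}, so there is nothing to compare your approach against within the present paper.
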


\begin{theorem}
\label{th5}If $G$ is an almost bipartite non-K\"{o}nig-Egerv\'{a}ry graph,
then
\begin{align*}
\varrho_{v}\left(  G\right)   &  =n\left(  G\right)  +d\left(  G\right)
-\xi\left(  G\right)  -\beta(G)\\
&  =2\alpha\left(  G\right)  +1-\xi\left(  G\right)  -\beta(G).
\end{align*}

\end{theorem}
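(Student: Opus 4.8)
The plan is to pin down $\varrho_v(G)$ by squeezing it between a ready-made upper bound and a lower bound that I build by hand, and then to read off the second displayed equality from Corollary \ref{cor8} (which already gives $n(G)+d(G)=2\alpha(G)+1$, so the two right-hand sides agree once the first is proved). For the upper bound I would simply invoke Theorem \ref{th10}: every almost bipartite non-K\"onig-Egerv\'ary graph is a $1$-K\"onig-Egerv\'ary graph by Lemma \ref{lem84}, so Theorem \ref{th10} yields $\varrho_v(G)\le n(G)+d(G)-\xi(G)-\beta(G)$ directly. All the work then lies in the reverse inequality, i.e.\ in exhibiting at least $n(G)+d(G)-\xi(G)-\beta(G)$ vertices $v$ with $G-v$ K\"onig-Egerv\'ary.

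Since $G$ is $1$-K\"onig-Egerv\'ary, Theorem \ref{th17} reduces ``$G-v$ is K\"onig-Egerv\'ary'' to ``$v$ is neither $\alpha$-critical nor $\mu$-critical'', so I would produce an explicit large set of such vertices, namely $V(C)\cup(\mathrm{nucleus}(G)\setminus\mathrm{core}(G))$. The vertices of $V(C)$ are already covered by earlier results: they avoid $\mathrm{core}(G)$ by Proposition \ref{prop10}, and the argument inside the proof of Proposition \ref{prop15} shows that no vertex of $V(C)$ is $\mu$-critical. Because $\mathrm{core}(G)=\mathrm{\ker}(G)$ (Theorem \ref{th2222}) and $\mathrm{\ker}(G)\subseteq\mathrm{nucleus}(G)\subseteq\mathrm{diadem}(G)$ from the definitions, while $\mathrm{diadem}(G)\cap V(C)=\emptyset$ by Lemma \ref{lem7}, the two pieces are disjoint; so everything comes down to the vertices of $\mathrm{nucleus}(G)\setminus\mathrm{core}(G)$.

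The key lemma, and the main obstacle, is that every $v\in\mathrm{nucleus}(G)\setminus\mathrm{core}(G)$ is neither $\alpha$-critical nor $\mu$-critical. Non-$\alpha$-criticality is free, since $v\notin\mathrm{core}(G)$ forces $\alpha(G-v)=\alpha(G)$. For non-$\mu$-criticality I would work through the critical difference rather than through matchings. First, $v\in\mathrm{nucleus}(G)\subseteq\mathrm{diadem}(G)$ gives $v\notin V(C)$, so $G-v$ still has the unique odd cycle $C$ and is almost bipartite; by Lemma \ref{lem84} it is K\"onig-Egerv\'ary or $1$-K\"onig-Egerv\'ary, and in either case $d(G-v)=\alpha(G-v)-\mu(G-v)$ by Theorem \ref{th43}(ii) or Theorem \ref{th44}(ii), while $d(G)=\alpha(G)-\mu(G)$ likewise. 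Second, since $v$ lies in every maximum critical independent set, each of which is independent and whose union is $\mathrm{diadem}(G)$ (Theorem \ref{th3}), we get $v\notin N_G(\mathrm{diadem}(G))$, so Lemma \ref{lem8} gives $d(G)\ge d(G-v)$. Combining with $\alpha(G-v)=\alpha(G)$ yields $0\le d(G)-d(G-v)=\mu(G-v)-\mu(G)\le 0$, forcing $\mu(G-v)=\mu(G)$; hence $v$ is not $\mu$-critical and $G-v$ is K\"onig-Egerv\'ary by Theorem \ref{th17}. I expect this critical-difference sandwich to be the crux: a direct matching argument on $G[N_G[\mathrm{diadem}(G)]]$ is treacherous, because a nucleus vertex can be saturated by every maximum matching of that K\"onig-Egerv\'ary subgraph yet still be freed in $G$ by rerouting through an edge from $V(C)$ into $N_G(\mathrm{diadem}(G))$, whereas Lemma \ref{lem8} avoids this bookkeeping altogether.

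Finally I would convert the count into the stated closed form. The set just constructed has cardinality $\lvert V(C)\rvert+\lvert\mathrm{nucleus}(G)\rvert-\xi(G)$, and Proposition \ref{prop14} combined with the partition $V(G)=V(C)\cup N_G[\mathrm{diadem}(G)]$ (Proposition \ref{prop15} and Theorem \ref{conj1}) gives $n(G)=\lvert V(C)\rvert+\beta(G)+\lvert\mathrm{nucleus}(G)\rvert-d(G)$. Substituting shows $\lvert V(C)\rvert+\lvert\mathrm{nucleus}(G)\rvert-\xi(G)=n(G)+d(G)-\xi(G)-\beta(G)$, so $\varrho_v(G)\ge n(G)+d(G)-\xi(G)-\beta(G)$. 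Together with the upper bound from Theorem \ref{th10} this forces equality, and Corollary \ref{cor8} rewrites $n(G)+d(G)$ as $2\alpha(G)+1$ to give the second equality, completing the proof.
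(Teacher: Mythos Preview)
Your proposal is correct and takes essentially the same route as the paper's proof: both sandwich $\varrho_v(G)$ between the upper bound of Theorem~\ref{th10} and a lower bound obtained from Lemma~\ref{lem8}, the identity $d=\alpha-\mu$ (Theorems~\ref{th43}\emph{(ii)} and~\ref{th44}\emph{(ii)}), and Theorem~\ref{th17}. The paper is slightly leaner in that it works directly with the complement $V(G)\setminus\bigl(\mathrm{core}(G)\cup N(\mathrm{diadem}(G))\bigr)$ and counts it as $n(G)-\xi(G)-|N(\mathrm{diadem}(G))|=n(G)-\xi(G)-\beta(G)+d(G)$ using only the criticality of $\mathrm{diadem}(G)$, so it needs neither the partition Theorem~\ref{conj1}/Proposition~\ref{prop14} nor a separate treatment of $V(C)$.
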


\begin{proof}
By Lemma \ref{lem84}, an almost bipartite non-K\"{o}nig-Egerv\'{a}ry graph is
$1$-K\"{o}nig-Egerv\'{a}ry, i.e.,
\[
\alpha\left(  G\right)  +\mu\left(  G\right)  =n\left(  G\right)  -1.
\]

Clearly, $G-v$ is either bipartite or almost bipartite, for every $v\in
V\left(  G\right)  $. By Theorems \ref{th44}(\textit{ii}) and \ref{th43}%
(\textit{ii}), we know that
\[
d\left(  G\right)  =\alpha\left(  G\right)  -\mu\left(  G\right)  \text{ and
}d\left(  G-v\right)  =\alpha\left(  G-v\right)  -\mu\left(  G-v\right)  .
\]

By Lemma \ref{lem8}, if $v\notin N\left(  \mathrm{diadem}\left(  G\right)
\right)  $, then $d\left(  G\right)  \geq d\left(  G-v\right)  $.

Therefore, if $v\notin N\left(  \mathrm{diadem}\left(  G\right)  \right)  $,
then%
\[
d\left(  G\right)  =\alpha\left(  G\right)  -\mu\left(  G\right)  \geq
\alpha\left(  G-v\right)  -\mu\left(  G-v\right)  =d\left(  G-v\right)  .
\]
Hence, $\mu\left(  G-v\right)  \geq\mu\left(  G\right)  $. On the other hand,
$\mu\left(  G\right)  \geq\mu\left(  G-v\right)  $ in general. Consequently,
$\mu\left(  G\right)  =\mu\left(  G-v\right)  $.

In addition, if $v\notin\mathrm{core}\left(  G\right)  $, then $\alpha\left(
G\right)  =\alpha\left(  G-v\right)  $. Hence, if $v\notin\mathrm{core}\left(
G\right)  $ and $v\notin N\left(  \mathrm{diadem}\left(  G\right)  \right)  $,
then
\[
\alpha\left(  G-v\right)  +\mu\left(  G-v\right)  =\alpha\left(  G\right)
+\mu\left(  G\right)  =n\left(  G\right)  -1=n\left(  G-v\right)  ,
\]
which means that $G-v$ is a K\"{o}nig-Egerv\'{a}ry graph. Consequently, we
get
\[
\varrho_{v}\left(  G\right)  \geq n\left(  G\right)  -\xi\left(  G\right)
-\left\vert N\left(  \mathrm{diadem}\left(  G\right)  \right)  \right\vert ,
\]
since, by Proposition \ref{prop11}, $\mathrm{core}\left(  G\right)  \cap
N\left(  \mathrm{diadem}\left(  G\right)  \right)  =\emptyset$. On the other
hand, Theorem \ref{th10} claims that
\[
\varrho_{v}\left(  G\right)  \leq n\left(  G\right)  -\xi\left(  G\right)
-\left\vert N\left(  \mathrm{diadem}\left(  G\right)  \right)  \right\vert
\]
for $1$--K\"{o}nig-Egerv\'{a}ry graphs. All in all, $\varrho_{v}\left(
G\right)  =n\left(  G\right)  -\xi\left(  G\right)  -\left\vert N\left(
\mathrm{diadem}\left(  G\right)  \right)  \right\vert $.

By Theorem \ref{th444}\textit{(ii)}, the set $\mathrm{diadem}\left(  G\right)
$ is critical in $G$. Hence, $d\left(  G\right)  =\left\vert \mathrm{diadem}%
\left(  G\right)  \right\vert -\left\vert N\left(  \mathrm{diadem}\left(
G\right)  \right)  \right\vert $. Thus,%
\[
\varrho_{v}\left(  G\right)  =n\left(  G\right)  +d\left(  G\right)
-\xi\left(  G\right)  -\beta(G),
\]
and together with Corollary \ref{cor8} this completes the proof.
\end{proof}

Let $A\in\mathrm{MaxCritIndep}(G)$. Then $A$ $\subseteq\mathrm{diadem}\left(
G\right)  $, and, therefore,
\[
\beta\left(  G\right)  =\left\vert \mathrm{diadem}\left(  G\right)
\right\vert \geq\left\vert A\right\vert =\alpha^{\prime}\left(  G\right)  .
\]
Consequently, by Theorem \ref{th5}, we arrive at the following.

\begin{corollary}
If $G$ is an almost bipartite non-K\"{o}nig-Egerv\'{a}ry graph, then
\[
\left\vert V\left(  C\right)  \right\vert \leq\varrho_{v}\left(  G\right)
\leq2\alpha\left(  G\right)  -\xi\left(  G\right)  -\alpha^{\prime}\left(
G\right)  +1
\]
where $C$ is the unique odd cycle in $G$.
\end{corollary}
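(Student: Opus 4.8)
The plan is to prove the two inequalities separately: the lower bound $\left\vert V(C)\right\vert \leq\varrho_{v}(G)$ is structural, while the upper bound follows immediately from Theorem \ref{th5} combined with the inequality $\beta(G)\geq\alpha^{\prime}(G)$ recorded just above.

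For the lower bound I would show that every vertex of $C$ is counted by $\varrho_{v}(G)$. Since $C$ is the \emph{unique} odd cycle of $G$, deleting any single vertex $v\in V(C)$ leaves the graph $G-v$ with no odd cycle at all, hence $G-v$ is bipartite. As every bipartite graph is a K\"{o}nig-Egerv\'{a}ry graph (by K\"{o}nig's theorem, $\alpha+\mu=n$ in the bipartite case), each of the $\left\vert V(C)\right\vert$ vertices $v\in V(C)$ meets the defining condition of $\varrho_{v}(G)$. This yields $\varrho_{v}(G)\geq\left\vert V(C)\right\vert$ at once.

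For the upper bound I would invoke Theorem \ref{th5}, which supplies the exact value $\varrho_{v}(G)=2\alpha(G)+1-\xi(G)-\beta(G)$. It then remains only to estimate $\beta(G)$ from below. Choosing any $A\in\mathrm{MaxCritIndep}(G)$, the set $A$ is a critical independent set, so $A\subseteq\mathrm{diadem}(G)$; hence $\beta(G)=\left\vert \mathrm{diadem}(G)\right\vert \geq\left\vert A\right\vert =\alpha^{\prime}(G)$, the last equality being the definition of the critical independence number. Substituting $\beta(G)\geq\alpha^{\prime}(G)$ into the formula of Theorem \ref{th5} gives $\varrho_{v}(G)\leq2\alpha(G)+1-\xi(G)-\alpha^{\prime}(G)$, and combining with the lower bound closes the chain.

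I do not anticipate a genuine obstacle here: the substantive work is already carried by Theorem \ref{th5}, and the corollary merely repackages it through the elementary bound $\beta(G)\geq\alpha^{\prime}(G)$. The only point meriting a moment's care is the lower bound, where one must use precisely the almost-bipartite hypothesis to guarantee that $G-v$ becomes bipartite---and therefore K\"{o}nig-Egerv\'{a}ry---for each vertex $v$ lying on the unique odd cycle $C$.
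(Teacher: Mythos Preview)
Your proposal is correct and follows essentially the same approach as the paper: the lower bound comes from the observation that deleting any vertex of the unique odd cycle leaves a bipartite (hence K\"{o}nig--Egerv\'{a}ry) graph, and the upper bound is obtained by substituting the elementary inequality $\beta(G)\geq\alpha^{\prime}(G)$ into the formula of Theorem~\ref{th5}. The paper treats the lower bound as evident and records the inequality $\beta(G)\geq\alpha^{\prime}(G)$ in the sentence immediately preceding the corollary, exactly as you do.
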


\begin{corollary}
\label{cor2}If $G$ is an almost bipartite non-K\"{o}nig-Egerv\'{a}ry graph
with the unique odd cycle $C$, then
\[
\varrho_{v}\left(  G\right)  =\left\vert V\left(  C\right)  \right\vert
+\left\vert \mathrm{nucleus}\left(  G\right)  \right\vert -\left\vert
\mathrm{core}\left(  G\right)  \right\vert .
\]

\end{corollary}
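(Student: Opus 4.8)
The plan is to combine three ingredients already in hand: the value of $\varrho_{v}(G)$ furnished by Theorem \ref{th5}, the fact that $V(C)$ and $N_{G}\left[\mathrm{diadem}(G)\right]$ partition $V(G)$, and the cardinality formula for $N_{G}\left[\mathrm{diadem}(G)\right]$ from Proposition \ref{prop14}. The entire argument is a chain of substitutions, so there is no genuine obstacle beyond bookkeeping; all the structural content resides in the partition, which is precisely what Section~2 was built to supply.

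First I would recall from Theorem \ref{th5} that $\varrho_{v}(G)=n(G)+d(G)-\xi(G)-\beta(G)$. Next, since $G$ is almost bipartite non-K\"{o}nig-Egerv\'{a}ry with unique odd cycle $C$, Proposition \ref{prop15} and Theorem \ref{conj1} together assert that $\left\{V(C),\,N_{G}\left[\mathrm{diadem}(G)\right]\right\}$ is a partition of $V(G)$, whence $n(G)=\left\vert V(C)\right\vert+\left\vert N_{G}\left[\mathrm{diadem}(G)\right]\right\vert$. Substituting this into the expression for $\varrho_{v}(G)$ yields $\varrho_{v}(G)=\left\vert V(C)\right\vert+\left\vert N_{G}\left[\mathrm{diadem}(G)\right]\right\vert+d(G)-\xi(G)-\beta(G)$.

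Then I would invoke Proposition \ref{prop14}, which holds for every graph, to replace $\left\vert N_{G}\left[\mathrm{diadem}(G)\right]\right\vert$ by $\beta(G)+\left\vert\mathrm{nucleus}(G)\right\vert-d(G)$. After this substitution the two occurrences of $\beta(G)$ cancel, as do the two occurrences of $d(G)$, leaving $\varrho_{v}(G)=\left\vert V(C)\right\vert+\left\vert\mathrm{nucleus}(G)\right\vert-\xi(G)$. Finally, recalling the defining identity $\xi(G)=\left\vert\mathrm{core}(G)\right\vert$ produces the claimed equality. The only point worth checking is that each cited result genuinely applies: Theorem \ref{th5} and the partition both rely on the almost-bipartite non-K\"{o}nig-Egerv\'{a}ry hypothesis (which is assumed here), while Proposition \ref{prop14} is unconditional, so the substitutions are all legitimate.
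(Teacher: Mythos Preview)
Your proof is correct and follows essentially the same approach as the paper: both start from Theorem \ref{th5}, use the partition $\{V(C),N_G[\mathrm{diadem}(G)]\}$ of $V(G)$ to express $n(G)$, and then substitute Proposition \ref{prop14} to cancel $\beta(G)$ and $d(G)$. Your citation of Proposition \ref{prop15} alongside Theorem \ref{conj1} for the partition is slightly more careful than the paper, which cites only Theorem \ref{conj1} at that step.
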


\begin{proof}
By Theorem \ref{conj1}, we know that
\[
n\left(  G\right)  =\left\vert V\left(  C\right)  \right\vert +\left\vert
N\left[  \mathrm{diadem}\left(  G\right)  \right]  \right\vert .
\]

Together with Theorem \ref{th5} and Proposition \ref{prop14} we conclude that
\begin{gather*}
\varrho_{v}\left(  G\right)  =n\left(  G\right)  +d\left(  G\right)
-\xi\left(  G\right)  -\beta(G)=\\
\left\vert V\left(  C\right)  \right\vert +\left\vert N\left[  \mathrm{diadem}%
\left(  G\right)  \right]  \right\vert +d\left(  G\right)  -\xi\left(
G\right)  -\beta(G)=\\
\left\vert V\left(  C\right)  \right\vert +\beta(G)+\left\vert
\mathrm{nucleus}\left(  G\right)  \right\vert -d\left(  G\right)  +d\left(
G\right)  -\xi\left(  G\right)  -\beta(G)=\\
\left\vert V\left(  C\right)  \right\vert +\left\vert \mathrm{nucleus}\left(
G\right)  \right\vert -\left\vert \mathrm{core}\left(  G\right)  \right\vert ,
\end{gather*}
as required.
\end{proof}

\begin{corollary}
\label{cor5}If $G$ is an almost bipartite non-K\"{o}nig-Egerv\'{a}ry graph
with the unique odd cycle $C$, then
\begin{gather*}
\left\vert V\left(  C\right)  \right\vert +\left\vert \mathrm{nucleus}\left(
G\right)  \right\vert +\left\vert \mathrm{diadem}\left(  G\right)  \right\vert
=\\
2\alpha\left(  G\right)  +1=\left\vert \mathrm{core}\left(  G\right)
\right\vert +\left\vert \mathrm{corona}\left(  G\right)  \right\vert .
\end{gather*}

\end{corollary}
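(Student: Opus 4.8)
The plan is to recognize that this corollary is a direct synthesis of the partition result established earlier together with the counting identities already proved, so no genuinely new combinatorial argument is needed. The statement splits into two independent equalities. The second one, $2\alpha(G)+1=\left\vert \mathrm{core}(G)\right\vert +\left\vert \mathrm{corona}(G)\right\vert$, is nothing but Theorem \ref{th44}\emph{(iii)} restated, so I would dispose of it in a single sentence by citing that result.

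For the first equality, the idea is to compute $n(G)$ two ways. First I would invoke Proposition \ref{prop15} and Theorem \ref{conj1} jointly: together they assert that $V(C)$ and $N_{G}\left[\mathrm{diadem}(G)\right]$ are disjoint and cover $V(G)$, hence form a partition, so that
\[
n(G)=\left\vert V(C)\right\vert +\left\vert N_{G}\left[\mathrm{diadem}(G)\right]\right\vert .
\]
Next I would substitute the expression for $\left\vert N_{G}\left[\mathrm{diadem}(G)\right]\right\vert$ furnished by Proposition \ref{prop14}, namely $\beta(G)+\left\vert \mathrm{nucleus}(G)\right\vert -d(G)$, and recall that $\beta(G)=\left\vert \mathrm{diadem}(G)\right\vert$ by definition. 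This yields
\[
n(G)=\left\vert V(C)\right\vert +\left\vert \mathrm{diadem}(G)\right\vert +\left\vert \mathrm{nucleus}(G)\right\vert -d(G),
\]
so that $\left\vert V(C)\right\vert +\left\vert \mathrm{nucleus}(G)\right\vert +\left\vert \mathrm{diadem}(G)\right\vert =n(G)+d(G)$.

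The final step is to apply Corollary \ref{cor8}, which gives $n(G)+d(G)=2\alpha(G)+1$ for any almost bipartite non-K\"{o}nig-Egerv\'{a}ry graph, and this turns the right-hand side into exactly $2\alpha(G)+1$, completing the first equality and chaining it to the second. Honestly there is no real obstacle here: every ingredient has already been proved in the preceding sections, and the only care required is bookkeeping — making sure the $-d(G)$ from Proposition \ref{prop14} cancels against the $+d(G)$ introduced by Corollary \ref{cor8}, and that the identification $\beta(G)=\left\vert \mathrm{diadem}(G)\right\vert$ is used at the right moment. The mild conceptual point worth flagging is that this corollary is really a repackaging of Theorem \ref{th5} and Corollary \ref{cor2} combined with the partition, so I would keep the write-up short and purely algebraic.
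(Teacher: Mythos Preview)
Your argument is correct. The second equality is indeed just Theorem \ref{th44}\emph{(iii)}, and for the first you combine the partition $n(G)=\left\vert V(C)\right\vert+\left\vert N_G[\mathrm{diadem}(G)]\right\vert$ (Proposition \ref{prop15} and Theorem \ref{conj1}) with Proposition \ref{prop14} and Corollary \ref{cor8}; the arithmetic checks out exactly as you describe.

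The paper, however, takes a different route: it equates the two expressions for $\varrho_v(G)$ given by Theorem \ref{th5} and Corollary \ref{cor2}, namely $\varrho_v(G)=2\alpha(G)+1-\xi(G)-\beta(G)$ and $\varrho_v(G)=\left\vert V(C)\right\vert+\left\vert\mathrm{nucleus}(G)\right\vert-\left\vert\mathrm{core}(G)\right\vert$, and then cancels $\xi(G)=\left\vert\mathrm{core}(G)\right\vert$ from both sides. Your approach is more self-contained, since it bypasses $\varrho_v(G)$ entirely and hence does not rely on Theorem \ref{th5} (whose proof in turn depends on Lemma \ref{lem8}, Proposition \ref{prop11}, and the upper bound of Theorem \ref{th10}). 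In effect you have noticed that the identity of Corollary \ref{cor5} is logically prior to the $\varrho_v$ machinery rather than a consequence of it; the paper's ordering obscures this because it proves Corollary \ref{cor2} by passing through Theorem \ref{th5} even though the underlying computation in that proof is exactly yours.
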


\begin{proof}
By Corollary \ref{cor2}, we know that%
\[
\varrho_{v}\left(  G\right)  =\left\vert V\left(  C\right)  \right\vert
+\left\vert \mathrm{nucleus}\left(  G\right)  \right\vert -\left\vert
\mathrm{core}\left(  G\right)  \right\vert .
\]
On the other hand, by Theorem \ref{th5}, we obtain
\[
\varrho_{v}\left(  G\right)  =2\alpha\left(  G\right)  +1-\xi\left(  G\right)
-\left\vert \mathrm{diadem}\left(  G\right)  \right\vert .
\]
Thus,%
\[
\left\vert V\left(  C\right)  \right\vert +\left\vert \mathrm{nucleus}\left(
G\right)  \right\vert -\xi\left(  G\right)  =2\alpha\left(  G\right)
+1-\xi\left(  G\right)  -\left\vert \mathrm{diadem}\left(  G\right)
\right\vert ,
\]
which means%
\[
\left\vert V\left(  C\right)  \right\vert +\left\vert \mathrm{nucleus}\left(
G\right)  \right\vert +\left\vert \mathrm{diadem}\left(  G\right)  \right\vert
=2\alpha\left(  G\right)  +1.
\]
as claimed.

To complete the proof use Theorem \ref{th44}\emph{(iii)}.
\end{proof}

\begin{theorem}
\label{cor7}If $G$ is an almost bipartite non-K\"{o}nig-Egerv\'{a}ry graph,
then
\[
\varrho_{v}\left(  G\right)  =\left\vert \mathrm{corona}\left(  G\right)
\right\vert -\left\vert \mathrm{diadem}\left(  G\right)  \right\vert .
\]

\end{theorem}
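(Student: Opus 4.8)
The plan is to derive this as a one-line algebraic consequence of the two principal results already established, namely Theorem \ref{th5} and Theorem \ref{th44}\emph{(iii)}. No new combinatorial argument about the graph structure is needed; the work lies entirely in recognizing that the two previously-proved formulas are linear expressions in the same quantities, and that the term $\xi(G)$ cancels when they are combined.

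First I would invoke Theorem \ref{th5}, which gives directly that $\varrho_{v}(G)=2\alpha(G)+1-\xi(G)-\beta(G)$ for any almost bipartite non-K\"{o}nig-Egerv\'{a}ry graph $G$. Next I would recall the two definitional identities $\xi(G)=\left\vert \mathrm{core}(G)\right\vert$ and $\beta(G)=\left\vert \mathrm{diadem}(G)\right\vert$, so that the expression above reads $\varrho_{v}(G)=2\alpha(G)+1-\left\vert \mathrm{core}(G)\right\vert-\left\vert \mathrm{diadem}(G)\right\vert$. The key substitution is then Theorem \ref{th44}\emph{(iii)}, which asserts $\left\vert \mathrm{core}(G)\right\vert+\left\vert \mathrm{corona}(G)\right\vert=2\alpha(G)+1$; rearranging this gives $2\alpha(G)+1-\left\vert \mathrm{core}(G)\right\vert=\left\vert \mathrm{corona}(G)\right\vert$.

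Plugging this last identity into the displayed expression for $\varrho_{v}(G)$, the $2\alpha(G)+1$ and $\left\vert \mathrm{core}(G)\right\vert$ terms collapse to $\left\vert \mathrm{corona}(G)\right\vert$, leaving $\varrho_{v}(G)=\left\vert \mathrm{corona}(G)\right\vert-\left\vert \mathrm{diadem}(G)\right\vert$, which is exactly the claimed equality. Honestly there is no real obstacle here: all the difficulty has been front-loaded into Theorem \ref{th5} (whose proof sandwiches $\varrho_{v}(G)$ between the lower bound coming from Lemma \ref{lem8} and Proposition \ref{prop11} and the upper bound of Theorem \ref{th10}) and into Theorem \ref{th44}\emph{(iii)}. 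The present statement is simply the most convenient repackaging of Theorem \ref{th5}, rewriting it so that the right-hand side involves only $\mathrm{corona}(G)$ and $\mathrm{diadem}(G)$ rather than the mixed quantities $\alpha(G),\xi(G),\beta(G)$; if anything, the only point requiring care is keeping the definitional translations $\xi\leftrightarrow\left\vert \mathrm{core}\right\vert$ and $\beta\leftrightarrow\left\vert \mathrm{diadem}\right\vert$ straight so that the cancellation is applied correctly.
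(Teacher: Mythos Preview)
Your proof is correct and is in fact more direct than the paper's. The paper derives the result via Corollary~\ref{cor2} (which expresses $\varrho_v(G)$ as $|V(C)|+|\mathrm{nucleus}(G)|-|\mathrm{core}(G)|$) and Corollary~\ref{cor5} (which gives $|V(C)|+|\mathrm{nucleus}(G)|+|\mathrm{diadem}(G)|=|\mathrm{core}(G)|+|\mathrm{corona}(G)|$), subtracting one from the other. Those corollaries in turn rest on the partition result Theorem~\ref{conj1} and on Proposition~\ref{prop14}. You bypass this detour entirely: combining Theorem~\ref{th5} in the form $\varrho_v(G)=2\alpha(G)+1-\xi(G)-\beta(G)$ with Theorem~\ref{th44}\emph{(iii)} gives the conclusion in one substitution, without ever invoking $V(C)$ or $\mathrm{nucleus}(G)$. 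The paper's route has the incidental benefit of displaying the intermediate equality $\varrho_v(G)=|V(C)|+|\mathrm{nucleus}(G)|-|\mathrm{core}(G)|$, but for the statement at hand your argument is the shorter and cleaner one.
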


\begin{proof}
By Corollary \ref{cor5} and Corollary \ref{cor2}, we obtain
\[
\varrho_{v}\left(  G\right)  =\left\vert V\left(  C\right)  \right\vert
+\left\vert \mathrm{nucleus}\left(  G\right)  \right\vert -\left\vert
\mathrm{core}\left(  G\right)  \right\vert =\left\vert \mathrm{corona}\left(
G\right)  \right\vert -\left\vert \mathrm{diadem}\left(  G\right)  \right\vert
,
\]
as required.
\end{proof}

\begin{corollary}
\label{cor10}If $G$ is an almost bipartite graph, then,
\[
\left\vert \mathrm{core}\left(  G\right)  \right\vert +\left\vert
\mathrm{corona}\left(  G\right)  \right\vert =n\left(  G\right)  +d\left(
G\right)  .
\]

\end{corollary}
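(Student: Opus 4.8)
The plan is to split on the K\"{o}nig-Egerv\'{a}ry dichotomy supplied by Lemma~\ref{lem84}: an almost bipartite graph $G$ satisfies $n(G)-1\leq\alpha(G)+\mu(G)\leq n(G)$, so $\kappa(G)\in\{0,1\}$, i.e. $G$ is either a K\"{o}nig-Egerv\'{a}ry graph or an almost bipartite non-K\"{o}nig-Egerv\'{a}ry graph. In each case I would show that both sides of the claimed identity reduce to the \emph{same} value $2\alpha(G)+\kappa(G)$, so the equality drops out without ever comparing $\mathrm{core}(G)\cup\mathrm{corona}(G)$ directly against $V(G)$.

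The non-K\"{o}nig-Egerv\'{a}ry case is essentially free: Theorem~\ref{th44}\emph{(iii)} gives $\left\vert\mathrm{core}(G)\right\vert+\left\vert\mathrm{corona}(G)\right\vert=2\alpha(G)+1$, and Corollary~\ref{cor8} gives $n(G)+d(G)=2\alpha(G)+1$, so the two sides coincide with no further work. The K\"{o}nig-Egerv\'{a}ry case carries the only arithmetic. There Theorem~\ref{th43}\emph{(iii)} yields $\left\vert\mathrm{core}(G)\right\vert+\left\vert\mathrm{corona}(G)\right\vert=2\alpha(G)$, so it remains to check $n(G)+d(G)=2\alpha(G)$. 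For this I would invoke Theorem~\ref{th43}\emph{(ii)}, namely $d(G)=\alpha(G)-\mu(G)$, together with the defining equality $\alpha(G)+\mu(G)=n(G)$ of a K\"{o}nig-Egerv\'{a}ry graph; substituting $\mu(G)=n(G)-\alpha(G)$ turns $d(G)$ into $2\alpha(G)-n(G)$, whence $n(G)+d(G)=2\alpha(G)$, exactly as needed.

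Combining the two cases then finishes the argument. I do not anticipate a genuine obstacle here: the corollary is a clean bookkeeping consequence of the already-established closed forms for $\left\vert\mathrm{core}(G)\right\vert+\left\vert\mathrm{corona}(G)\right\vert$ and for $d(G)$ in each regime, and the one place demanding care is the K\"{o}nig-Egerv\'{a}ry reduction $d(G)=\alpha(G)-\mu(G)=2\alpha(G)-n(G)$, where the defining relation $\alpha(G)+\mu(G)=n(G)$ must be used to eliminate $\mu(G)$.
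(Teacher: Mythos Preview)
Your proposal is correct, and for the K\"{o}nig-Egerv\'{a}ry case it coincides verbatim with the paper's argument. In the non-K\"{o}nig-Egerv\'{a}ry case, however, you take a shorter and more self-contained route than the paper does. You invoke Theorem~\ref{th44}\emph{(iii)} and Corollary~\ref{cor8} directly, both of which are already stated in the introduction, so your proof of Corollary~\ref{cor10} does not rely on any of the Section~3 machinery. The paper instead equates the two formulas for $\varrho_v(G)$ established in Theorem~\ref{th5} and Theorem~\ref{cor7}, obtaining
\[
n(G)+d(G)-\xi(G)-\beta(G)=\left\vert \mathrm{corona}(G)\right\vert -\left\vert \mathrm{diadem}(G)\right\vert,
\]
and then cancels $\beta(G)=\left\vert\mathrm{diadem}(G)\right\vert$ to reach $\left\vert\mathrm{corona}(G)\right\vert=n(G)+d(G)-\xi(G)$. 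This is logically circular in spirit (though not formally), since Theorem~\ref{th5} and Theorem~\ref{cor7} themselves rest on Theorem~\ref{th44}\emph{(iii)} and Corollary~\ref{cor8}. What the paper's route buys is that Corollary~\ref{cor10} is exhibited as a direct consequence of the $\varrho_v$ theory just developed; what your route buys is a cleaner, dependency-minimal proof that could have been placed immediately after Corollary~\ref{cor8}.
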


\begin{proof}
Suppose that $G$ is almost bipartite.

\textit{Case 1.} $G$ is non-K\"{o}nig-Egerv\'{a}ry. By Theorem \ref{th5} and
Theorem \ref{cor7}, we know that
\[
n\left(  G\right)  +d\left(  G\right)  -\xi\left(  G\right)  -\beta
(G)=\left\vert \mathrm{corona}\left(  G\right)  \right\vert -\left\vert
\mathrm{diadem}\left(  G\right)  \right\vert .
\]
Hence,
\[
\left\vert \mathrm{corona}\left(  G\right)  \right\vert =n\left(  G\right)
+d\left(  G\right)  -\xi\left(  G\right)
\]
as claimed.

\textit{Case 2.} $G$ is K\"{o}nig-Egerv\'{a}ry. By the definition of
K\"{o}nig-Egerv\'{a}ry graphs and Theorem \ref{th43}\emph{(ii)}, we obtain
$2\alpha\left(  G\right)  =$ $n\left(  G\right)  +d\left(  G\right)  $. To
complete the proof use Theorem \ref{th43}\emph{(iii).}
\end{proof}

Let us mention that $\mathrm{corona}(G)\cup N_{G}\left(  \mathrm{core}%
(G)\right)  =V(G)$ is true for every almost bipartite graph in accordance with
Theorem \ref{th43}\emph{(i) }and\emph{ }Theorem \ref{th44}\emph{(i)}.

\begin{theorem}
\cite{LevMan2024a}\label{lem11} Suppose that $G$\ is a $1$%
-\textit{K\"{o}nig-Egerv\'{a}ry graph. Then }$\varrho_{v}\left(  G\right)
=n\left(  G\right)  $ if and only if $\mu\left(  G\right)  <\frac{n\left(
G\right)  }{2}$, $\xi\left(  G\right)  =0$ and $\beta\left(  G\right)  =0$.
\end{theorem}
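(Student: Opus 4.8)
The plan is to translate the condition $\varrho_v(G)=n(G)$ into the two vertex-deletion criteria supplied by Theorem \ref{th17}. Since $G$ is $1$-K\"onig-Egerv\'ary, that theorem says $G-v$ is K\"onig-Egerv\'ary exactly when $v$ is neither $\alpha$-critical nor $\mu$-critical; hence $\varrho_v(G)=n(G)$ if and only if $G$ has no $\alpha$-critical vertex and no $\mu$-critical vertex. The absence of $\alpha$-critical vertices is literally $\mathrm{core}(G)=\emptyset$, i.e. $\xi(G)=0$, so the whole statement reduces to showing that, under $\xi(G)=0$, the graph has no $\mu$-critical vertex if and only if $\mu(G)<n(G)/2$ and $\beta(G)=0$.

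For the forward implication I would first argue $\mu(G)<n(G)/2$: if $G$ had a perfect matching then $n(G)$ is even, and for every $v$ the graph $G-v$ has an odd number of vertices, forcing $\mu(G-v)=\mu(G)-1$; thus every vertex would be $\mu$-critical, contradicting the hypothesis. Next I would obtain $\beta(G)=0$ as follows. By Theorem \ref{th444}(ii) the set $\mathrm{diadem}(G)$ is itself critical, so by Theorem \ref{th333}(iii) there is a matching from $N_G(\mathrm{diadem}(G))$ into $\mathrm{diadem}(G)$, and then Theorem \ref{th11} makes every vertex of $N_G(\mathrm{diadem}(G))$ $\mu$-critical. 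Having no $\mu$-critical vertex forces $N_G(\mathrm{diadem}(G))=\emptyset$. If $\mathrm{diadem}(G)$ were nonempty, a vertex $w$ in it would either have a neighbor (impossible, since $N_G(\mathrm{diadem}(G))=\emptyset$) or be isolated; but an isolated vertex lies in every maximum independent set, hence in $\mathrm{core}(G)$, contradicting $\xi(G)=0$. Therefore $\mathrm{diadem}(G)=\emptyset$ and $\beta(G)=0$.

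The reverse implication is where the real work lies. Assume $\xi(G)=0$, $\beta(G)=0$ and $\mu(G)<n(G)/2$; I must show no vertex is $\mu$-critical. Since $\beta(G)=0$ gives $\mathrm{diadem}(G)=\emptyset$, hence $d(G)=0$ and $N_G(\mathrm{diadem}(G))=\emptyset$, Lemma \ref{lem8} yields $d(G-v)\le d(G)=0$ for every $v$, so in fact $d(G-v)=0$ for all $v$. The difficulty is that, for a general $1$-K\"onig-Egerv\'ary graph, $d$ is not simply $\alpha-\mu$, so one cannot read off $\mu(G-v)=\mu(G)$ from $d(G-v)=d(G)$ the way the proof of Theorem \ref{th5} does in the almost bipartite case.

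I expect this to be the main obstacle, and my proposed way around it is to reinterpret $\beta(G)=0$ as an expansion property---every nonempty independent set $B$ satisfies $|N_G(B)|>|B|$, since such a $B$ has $d_G(B)\le d(G)=0$ and cannot achieve the critical difference---and then feed this into the Gallai--Edmonds decomposition $V(G)=D\cup A\cup C$, in which the $\mu$-critical (essential) vertices are precisely $A\cup C$. The hypothesis $\mu(G)<n(G)/2$ guarantees $D\neq\emptyset$ and that the number of (factor-critical) components of $G[D]$ strictly exceeds $|A|$; the plan is to show that $A\cup C\neq\emptyset$ would let me assemble, from maximum independent sets inside the components of $G[D]$ together with the surplus components, a nonempty independent set $B$ with $|N_G(B)|\le|B|$, contradicting the expansion property (here one uses that there are no edges between $C$ and $D$, and that distinct components of $G[D]$ are mutually non-adjacent). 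Carrying out this combinatorial extraction cleanly---verifying independence of the constructed set and that its neighborhood does not grow---is the crux; once $A\cup C=\emptyset$ is established, every vertex lies in $D$, no vertex is $\mu$-critical, and combining this with $\xi(G)=0$ through Theorem \ref{th17} completes the equivalence.
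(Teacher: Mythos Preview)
The paper does not prove this theorem: it is quoted from \cite{LevMan2024a} and used as a black box (in the proof of Proposition~\ref{prop18}), so there is no in-paper argument to compare your proposal against.

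A few remarks on your proposal itself. The forward direction is essentially fine, with one small adjustment: Theorem~\ref{th333}\emph{(iii)} and Theorem~\ref{th11} are stated for critical \emph{independent} sets, and the paper never asserts that $\mathrm{diadem}(G)$ is independent. It is cleaner to apply those results to a maximum critical independent set $A$: you obtain $N_G(A)=\emptyset$, hence every vertex of $A$ is isolated and therefore lies in $\mathrm{core}(G)$; since $\xi(G)=0$ this forces $A=\emptyset$, whence $\alpha'(G)=0$ and $\beta(G)=0$. (Alternatively, the forward direction follows in one line from Theorem~\ref{th10}: $n=\varrho_v(G)\le n+d(G)-\xi(G)-\beta(G)$ gives $\xi(G)+\beta(G)\le d(G)\le\beta(G)$, so $\xi(G)=0$ and $|N(\mathrm{diadem}(G))|=0$, and then the isolated-vertex argument yields $\beta(G)=0$.)

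For the reverse direction your Gallai--Edmonds plan is plausible but, as you say, the extraction of an independent set violating the expansion property is not carried out. One concrete sharpening that may help: from $\beta(G)=0$ one gets $d(G)=0$, and since $G$ is $1$-K\"onig--Egerv\'ary any maximum independent set $S$ with $|S|>\mu(G)$ would satisfy $|N(S)|\le n-|S|=\mu(G)<|S|$, i.e.\ $d(S)>0$, a contradiction; thus $\alpha(G)\le\mu(G)$, and together with $\mu(G)<n/2$ and $\alpha(G)+\mu(G)=n-1$ this forces $n$ odd and $\alpha(G)=\mu(G)=(n-1)/2$, so the Gallai--Edmonds deficiency is exactly $1$. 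Also, any factor-critical component $D_i$ with $|D_i|\ge 3$ satisfies $\alpha(G[D_i])\le(|D_i|-1)/2$ (otherwise removing a vertex outside a maximum independent set of $D_i$ would leave an independent set of size exceeding half the order, precluding a perfect matching). Feeding these exact values into your counting scheme is where the argument must be completed; the sketch as written does not yet show why the constructed set $B$ has $|N_G(B)|\le|B|$.
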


\begin{proposition}
\label{prop18}If $G$ is an almost bipartite non-K\"{o}nig-Egerv\'{a}ry graph,
then $\varrho_{v}\left(  G\right)  =n\left(  G\right)  $ if and only if
$G=C_{2k+1}$ for some integer $k\geq1$.
\end{proposition}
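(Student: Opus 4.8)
The plan is to reduce the whole statement to Theorem \ref{lem11}. First I would record that, by Lemma \ref{lem84}, an almost bipartite non-K\"{o}nig-Egerv\'{a}ry graph $G$ satisfies $\alpha(G)+\mu(G)=n(G)-1$, so it is $1$-K\"{o}nig-Egerv\'{a}ry; moreover Corollary \ref{cor8} gives $\mu(G)<n(G)/2$ unconditionally. Hence the first of the three conditions in Theorem \ref{lem11} is automatic, and that theorem collapses to the equivalence $\varrho_{v}(G)=n(G)\iff(\xi(G)=0\text{ and }\beta(G)=0)$. The proposition is therefore equivalent to showing that, for such graphs, $\mathrm{core}(G)=\emptyset$ together with $\mathrm{diadem}(G)=\emptyset$ holds if and only if $G$ is an odd cycle.

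For the forward implication, assume $\varrho_{v}(G)=n(G)$; then $\beta(G)=0$, i.e. $\mathrm{diadem}(G)=\emptyset$, so $N_{G}[\mathrm{diadem}(G)]=\emptyset$. By Theorem \ref{conj1}, $V(C)\cup N_{G}[\mathrm{diadem}(G)]=V(G)$, forcing $V(C)=V(G)$: every vertex lies on the unique odd cycle $C$. It remains to exclude chords. A chord would split $C$ into two arcs whose edge-counts sum to $\left\vert V(C)\right\vert$, and closing each arc with the chord produces two cycles whose lengths sum to $\left\vert V(C)\right\vert+2$, an odd number; thus one of them is odd, giving a second odd cycle and contradicting almost-bipartiteness. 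Hence $G=C=C_{2k+1}$, with $k\geq1$ because $\left\vert V(C)\right\vert\geq3$.

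For the converse, let $G=C_{2k+1}$, so $V(C)=V(G)$. This graph is almost bipartite with itself as its unique odd cycle and is non-K\"{o}nig-Egerv\'{a}ry since $\alpha(G)+\mu(G)=2k=n(G)-1$; thus Corollary \ref{cor8} applies. To conclude via Theorem \ref{lem11} I need $\xi(G)=0$ and $\beta(G)=0$. By Lemma \ref{lem7}\emph{(i)} every critical independent set is disjoint from $V(C)=V(G)$ and hence empty, so $\mathrm{diadem}(G)=\emptyset$ and $\beta(G)=0$. By Lemma \ref{lem7}\emph{(ii)} the set $\mathrm{core}(G)$ is critical, and it is independent, so the same argument gives $\mathrm{core}(G)=\emptyset$, i.e. $\xi(G)=0$. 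Theorem \ref{lem11} then yields $\varrho_{v}(G)=n(G)$.

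The main obstacle is the geometric step in the forward direction: passing from $\beta(G)=0$ to $G=C_{2k+1}$ rather than merely to $V(C)=V(G)$. Once Theorem \ref{conj1} supplies $V(C)=V(G)$, the crux is the parity argument showing that any chord would create a second odd cycle; everything else is bookkeeping built on the reduction to the conditions $\xi(G)=\beta(G)=0$.
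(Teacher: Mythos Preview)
Your proof is correct and follows essentially the same route as the paper: reduce via Theorem~\ref{lem11} to the conditions $\xi(G)=\beta(G)=0$ and then identify $G$ with its unique odd cycle. The only differences are that you invoke Theorem~\ref{conj1} directly where the paper routes through Corollary~\ref{cor2} together with the inclusion $\mathrm{nucleus}(G)\subseteq\mathrm{diadem}(G)$, and that you spell out both the chord argument and the converse, which the paper leaves implicit (it simply writes ``In other words, $G=C_{2k+1}$'' and ``The converse is clear'').
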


\begin{proof}
Suppose $G$ is an almost bipartite non-K\"{o}nig-Egerv\'{a}ry graph and
$\varrho_{v}\left(  G\right)  =n\left(  G\right)  $.

By definitions of $\mathrm{nucleus}(G)$ and $\mathrm{diadem}(G)$,%
\begin{align*}
\mathrm{nucleus}(G)  &  =\bigcap\{S:S\in\mathrm{MaxCritIndep}(G)\}\subseteq\\
\bigcup\{S  &  :S\in\mathrm{MaxCritIndep}(G)\}=\mathrm{diadem}(G).
\end{align*}
Hence, the fact that $\beta\left(  G\right)  =0$, i.e., $\left\vert
\mathrm{diadem}\left(  G\right)  \right\vert =0$, implies $\left\vert
\mathrm{nucleus}\left(  G\right)  \right\vert =0$.

Now, by Corollary \ref{cor2}, claiming that
\[
\varrho_{v}\left(  G\right)  =\left\vert V\left(  C\right)  \right\vert
+\left\vert \mathrm{nucleus}\left(  G\right)  \right\vert -\left\vert
\mathrm{core}\left(  G\right)  \right\vert ,
\]
and Theorem \ref{lem11} we conclude that $n\left(  G\right)  =\varrho
_{v}\left(  G\right)  =\left\vert V\left(  C\right)  \right\vert $. In other
words, $G=C_{2k+1}$ for some integer $k\geq1$.

The converse is clear.
\end{proof}

Proposition \ref{prop18} immediately implies the following.

\begin{corollary}
If $G$ is an almost bipartite non-K\"{o}nig-Egerv\'{a}ry graph different from
an odd cycle, then there exists a vertex $v\in V\left(  G\right)  $ such that
$G-v$ is not K\"{o}nig-Egerv\'{a}ry.
\end{corollary}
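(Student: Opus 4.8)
The plan is to reduce the statement to the already-available characterization of when $\varrho_{v}(G)=n(G)$ for $1$-König-Egerváry graphs, namely Theorem \ref{lem11}, and then to translate its output into the structural conclusion $G=C_{2k+1}$ by means of the counting formula in Corollary \ref{cor2}. First I would note that, by Lemma \ref{lem84}, an almost bipartite non-König-Egerváry graph is automatically $1$-König-Egerváry, so Theorem \ref{lem11} is applicable. That result asserts that $\varrho_{v}(G)=n(G)$ holds precisely when $\mu(G)<\tfrac{n(G)}{2}$, $\xi(G)=0$, and $\beta(G)=0$. Since the clause $\mu(G)<\tfrac{n(G)}{2}$ is handed to us for free by Corollary \ref{cor8}, the hypothesis $\varrho_{v}(G)=n(G)$ becomes equivalent to the pair of conditions $\xi(G)=0$ (empty core) and $\beta(G)=0$ (empty diadem).

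For the forward direction I would then exploit the elementary inclusion $\mathrm{nucleus}(G)\subseteq\mathrm{diadem}(G)$, which is immediate from their definitions as, respectively, an intersection and a union over the same family $\mathrm{MaxCritIndep}(G)$. Consequently $\beta(G)=0$ forces $\left\vert \mathrm{nucleus}(G)\right\vert =0$ as well. Feeding $\left\vert \mathrm{nucleus}(G)\right\vert =0$ and $\left\vert \mathrm{core}(G)\right\vert =\xi(G)=0$ into the identity $\varrho_{v}(G)=\left\vert V(C)\right\vert +\left\vert \mathrm{nucleus}(G)\right\vert -\left\vert \mathrm{core}(G)\right\vert$ from Corollary \ref{cor2} collapses it to $\varrho_{v}(G)=\left\vert V(C)\right\vert$. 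Combined with the assumption $\varrho_{v}(G)=n(G)$, this yields $n(G)=\left\vert V(C)\right\vert$, so $V(G)=V(C)$ and $G$ is exactly its unique odd cycle, i.e.\ $G=C_{2k+1}$ for some $k\geq1$.

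The converse I expect to be routine and to require none of the heavy machinery: if $G=C_{2k+1}$ then deleting any vertex leaves the path $P_{2k}$, which is bipartite and hence König-Egerváry, so every vertex is counted and $\varrho_{v}(G)=n(G)$. The main obstacle, such as it is, lies entirely in the forward direction, and its only delicate point is extracting $\left\vert \mathrm{nucleus}(G)\right\vert =0$ from $\beta(G)=0$; the $\mathrm{nucleus}(G)\subseteq\mathrm{diadem}(G)$ inclusion disposes of this at once, after which the whole argument is reduced to the clean bookkeeping supplied by Corollary \ref{cor2} and Theorem \ref{lem11}.
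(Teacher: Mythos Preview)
Your proposal is correct and follows exactly the paper's approach: the corollary is stated as an immediate consequence of Proposition~\ref{prop18}, and what you have written is precisely the paper's proof of Proposition~\ref{prop18} (Theorem~\ref{lem11} plus Corollary~\ref{cor8} to get $\xi(G)=\beta(G)=0$, the inclusion $\mathrm{nucleus}(G)\subseteq\mathrm{diadem}(G)$, and then Corollary~\ref{cor2} to force $n(G)=\left\vert V(C)\right\vert$). The only cosmetic point is that you re-prove Proposition~\ref{prop18} rather than cite it, and you do not spell out the trivial contrapositive step (``$G\neq C_{2k+1}$ implies $\varrho_v(G)<n(G)$, hence some $G-v$ is not K\"{o}nig-Egerv\'{a}ry''), but neither affects correctness.
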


Let us recall that $V(C)\cap N_{G}\left[  \mathrm{diadem}\left(  G\right)
\right]  =\emptyset$. The graphs from Figure \ref{fig2} have:

$\varrho_{v}\left(  G_{1}\right)  =\left\vert V\left(  C_{1}\right)
\right\vert +\left\vert \mathrm{nucleus}\left(  G_{1}\right)  \right\vert
-\xi\left(  G_{1}\right)  =4$ and

$\varrho_{v}\left(  G_{2}\right)  =\left\vert V\left(  C_{2}\right)
\right\vert +\left\vert \mathrm{nucleus}\left(  G_{2}\right)  \right\vert
-\xi\left(  G_{2}\right)  =\left\vert V\left(  C_{2}\right)  \right\vert =3$.

\begin{figure}[h]
\setlength{\unitlength}{1cm}\begin{picture}(5,1.2)\thicklines
\multiput(3,0)(1,0){3}{\circle*{0.29}}
\multiput(4,1)(1,0){2}{\circle*{0.29}}
\put(3,0){\line(1,0){2}}
\put(3,0){\line(1,1){1}}
\put(4,0){\line(0,1){1}}
\put(5,0){\line(0,1){1}}
\put(5.35,1){\makebox(0,0){$x$}}
\put(2,0.5){\makebox(0,0){$G_{1}$}}
\multiput(8,0)(1,0){4}{\circle*{0.29}}
\multiput(9,1)(1,0){2}{\circle*{0.29}}
\put(8,0){\line(1,0){3}}
\put(8,0){\line(1,1){1}}
\put(9,0){\line(0,1){1}}
\put(10,0){\line(0,1){1}}
\put(10.35,1){\makebox(0,0){$u$}}
\put(11.35,0){\makebox(0,0){$v$}}
\put(7,0.5){\makebox(0,0){$G_{2}$}}
\end{picture}\caption{$\mathrm{core}\left(  G_{1}\right)  \subset
\mathrm{nucleus}\left(  G_{1}\right)  =\left\{  x\right\}  $, while
$\mathrm{core}\left(  G_{2}\right)  =\mathrm{nucleus}\left(  G_{2}\right)
=\left\{  u,v\right\}  $}%
\label{fig2}%
\end{figure}

\begin{theorem}
If $G$ is an almost bipartite non-K\"{o}nig-Egerv\'{a}ry graph with the unique
odd cycle $C$, then
\[
\varrho_{v}\left(  G\right)  \geq%
{\displaystyle\sum\limits_{v\in V\left(  C\right)  }}
\deg\left(  v\right)  -\left\vert V\left(  C\right)  \right\vert -\left\vert
N_{G}\left(  V(C)\right)  \cap N_{G}\left(  \mathrm{core}\left(  G\right)
\right)  \right\vert .
\]

\end{theorem}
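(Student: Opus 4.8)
The plan is to rewrite both sides in terms of the cycle and the critical sets and reduce everything to a single neighbourhood inequality. Since $C$ is the unique odd cycle it is chordless (a chord would split $C$ into two shorter cycles of opposite parities, producing a second odd cycle), so every $v\in V(C)$ has exactly two neighbours on $C$. Writing $N_1(C)=N_G(V(C))\setminus V(C)$ and using Lemma \ref{lem13} (no two cycle vertices share an outside neighbour, and no outside vertex is adjacent to two cycle vertices), I get $\sum_{v\in V(C)}\deg(v)=2|V(C)|+|N_1(C)|$. Hence the right-hand side is $|V(C)|+|N_1(C)|-|W|$ with $W=N_G(V(C))\cap N_G(\mathrm{core}(G))$. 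Because $V(C)\subseteq N_G(V(C))$ and $V(C)\cap N_G(\mathrm{core}(G))=\emptyset$ by Proposition \ref{prop10}, we have $N_G(V(C))=V(C)\cup N_1(C)$ and $W=N_1(C)\cap N_G(\mathrm{core}(G))$, so the right-hand side equals $|V(C)|+|N_1(C)\setminus N_G(\mathrm{core}(G))|$. On the left, Corollary \ref{cor2} gives $\varrho_v(G)=|V(C)|+|\mathrm{nucleus}(G)|-\xi(G)$, and since $\mathrm{core}(G)=\mathrm{\ker}(G)\subseteq\mathrm{nucleus}(G)$ (Theorems \ref{th444} and \ref{th2222}), $|\mathrm{nucleus}(G)|-\xi(G)=|\mathrm{nucleus}(G)\setminus\mathrm{core}(G)|$. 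Thus the claim is equivalent to
\[
\left\vert \mathrm{nucleus}(G)\setminus\mathrm{core}(G)\right\vert \geq \left\vert N_1(C)\setminus N_G(\mathrm{core}(G))\right\vert .
\]

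Next I would record a cardinality identity for the left-hand side. Both $\mathrm{nucleus}(G)$ (an intersection of critical sets, hence critical by Theorem \ref{th444}\emph{(ii)}) and $\mathrm{core}(G)$ (critical by Lemma \ref{lem7}\emph{(ii)}) are critical independent sets, so $|N_G(\mathrm{nucleus}(G))|=|\mathrm{nucleus}(G)|-d(G)$ and $|N_G(\mathrm{core}(G))|=\xi(G)-d(G)$ by Theorem \ref{th44}\emph{(ii)}. As $\mathrm{core}(G)\subseteq\mathrm{nucleus}(G)$ forces $N_G(\mathrm{core}(G))\subseteq N_G(\mathrm{nucleus}(G))$, subtracting yields
\[
\left\vert N_G(\mathrm{nucleus}(G))\setminus N_G(\mathrm{core}(G))\right\vert=\left\vert\mathrm{nucleus}(G)\setminus\mathrm{core}(G)\right\vert .
\]
So it suffices to prove the inclusion $N_1(C)\subseteq N_G(\mathrm{nucleus}(G))$: together with $N_G(\mathrm{core}(G))\subseteq N_G(\mathrm{nucleus}(G))$ this gives $N_1(C)\setminus N_G(\mathrm{core}(G))\subseteq N_G(\mathrm{nucleus}(G))\setminus N_G(\mathrm{core}(G))$, and the displayed identity closes the argument.

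The heart of the proof is therefore $N_1(C)\subseteq N_G(\mathrm{nucleus}(G))$. By Lemma \ref{lem13} and Proposition \ref{prop17}, $N_1(C)\subseteq\bigcap_{A\in\mathrm{MaxCritIndep}(G)}N_G(A)$, so every $w\in N_1(C)$ has a neighbour in each maximum critical independent set; the task is to promote this to a neighbour in $\mathrm{nucleus}(G)=\bigcap_A A$. To localise, I would use the bipartite components $D_y$: since the sets $N_G[V(D_y-y)]$ are pairwise disjoint (Lemma \ref{lem10}) and contain $N_1(C)$, for any independent $A\subseteq V\setminus V(C)$ the neighbourhood splits as a disjoint union $N_G(A)=\bigcup_{y\in V(C)}N_G(A\cap V(D_y-y))$ (up to components of $G-E(C)$ disjoint from $C$, which never meet $N_1(C)$), so $d_G(A)=\sum_y d_G(A\cap V(D_y-y))$. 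Consequently $\mathrm{MaxCritIndep}(G)$ consists exactly of the unions $\bigcup_y A_y$ where each $A_y$ is a maximum \emph{local} critical set of $D_y$ (its critical difference measured while allowing $y$ as an outside neighbour), whence $\mathrm{nucleus}(G)=\bigcup_y\mathcal{N}_y$ with $\mathcal{N}_y$ the intersection of those local maxima. For $w\in N_1(C)\cap N_G(y)\subseteq V(D_y-y)$ the condition $w\in\bigcap_A N_G(A)$ then localises to $w\in\bigcap_{A_y}N_G(A_y)$, and the problem reduces to a single bipartite component: a neighbour $w$ of $y$ meeting every local maximum critical set must meet their intersection $\mathcal{N}_y$.

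This local statement is the main obstacle. The natural tool is Theorem \ref{th43}\emph{(i)} applied to the König-Egerv\'{a}ry subgraph that Larson's decomposition (Theorem \ref{th100}) attaches to the local critical sets of $D_y$, after identifying the local maximum critical sets with the maximum independent sets of that subgraph that avoid $N_G(y)$. The delicate point is that $w$ may itself belong to some maximum independent set of the subgraph, so one cannot simply invoke ``$w\notin\mathrm{corona}$'': instead one must work in the subgraph obtained by deleting $N_G(y)$ and run a matching/defect (Hall-type) argument to force a neighbour of $w$ into the local core. I expect this per-component König-Egerv\'{a}ry analysis, rather than the global bookkeeping of the first two steps, to be where the real work lies; the bipartiteness of $D_y$ and the presence of a single distinguished vertex $y$ are precisely what should make it tractable.
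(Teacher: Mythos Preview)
Your reductions in the first two paragraphs are correct and clean: the inequality is indeed equivalent to $|\mathrm{nucleus}(G)\setminus\mathrm{core}(G)|\geq|N_1(C)\setminus N_G(\mathrm{core}(G))|$, and this would follow from the inclusion $N_1(C)\subseteq N_G(\mathrm{nucleus}(G))$. But that inclusion is exactly where the argument stops being a proof and becomes a plan. You localise it to each $D_y$---a vertex $w$ adjacent to $y$ and lying in $\bigcap_{A_y}N_G(A_y)$ must have a neighbour in $\bigcap_{A_y}A_y$---and then only gesture at a strategy (``Hall-type argument'', ``per-component K\"{o}nig--Egerv\'{a}ry analysis'') without executing it. That is a genuine gap: belonging to the intersection of neighbourhoods is in general strictly weaker than neighbouring the intersection, and none of the cited results closes that gap. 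Your proposal does not establish whether the inclusion is even true.

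The paper avoids this difficulty entirely by taking a different, constructive route that never passes through Corollary~\ref{cor2} or $\mathrm{nucleus}(G)$. For each $b\in N_1(C)\setminus N_G(\mathrm{core}(G))$ with unique cycle neighbour $a$, it invokes Theorem~\ref{th18} to obtain a maximum matching $M$ leaving $a$ unsaturated, swaps the edge $bM(b)$ for $ab$ to get another maximum matching that now leaves $M(b)$ unsaturated (so $M(b)$ is not $\mu$-critical), notes that $M(b)\notin\mathrm{core}(G)$ because $b\notin N_G(\mathrm{core}(G))$, and concludes via Theorem~\ref{th17} that $G-M(b)$ is K\"{o}nig--Egerv\'{a}ry. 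Counting these vertices together with the $|V(C)|$ cycle vertices yields the bound directly. The argument lives entirely at the level of matchings; the structure of $\mathrm{nucleus}(G)$ plays no role.
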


\begin{proof}
Clearly, if $v\in V\left(  C\right)  $, then $G-v$ is bipartite, and
consequently, K\"{o}nig-Egerv\'{a}ry. Thus $\left\vert V\left(  C\right)
\right\vert \leq\varrho_{v}\left(  G\right)  $.

By Lemma \ref{lem13}, the number of different neighbors of the cycle $C$ not
belonging to $C$ is$%
{\displaystyle\sum\limits_{v\in V\left(  C\right)  }}
\left(  \deg\left(  v\right)  -2\right)  $, since $\left(  N_{G}\left(
x\right)  \cap N_{G}\left(  y\right)  \right)  -V\left(  C\right)  =\emptyset$
for every distinct $x,y\in V\left(  C\right)  $. Moreover, if $b\in
N_{G}\left(  V\left(  C\right)  \right)  -V\left(  C\right)  $, then, by Lemma
\ref{lem13}, Theorem \ref{th100}\emph{(i}), Proposition\emph{ }\ref{prop15}
and Theorem \ref{conj1}, there exist: a unique vertex $a\in$ $V\left(
C\right)  \cap N_{G}\left(  b\right)  $, and some $A\in\mathrm{MaxCritIndep}%
(G)$, such that $b\in N_{G}\left(  A\right)  $.

Theorem \ref{th18} allows us to start with a maximum matching of $G$, say $M$,
not saturating $a$ but saturating $b$. Let us define a new matching $M_{1}$ as
follows: $M_{1}\left(  a\right)  =b$, $M_{1}\left(  b\right)  =a$, and
$M_{1}\left(  u\right)  =M\left(  u\right)  $ for every $u\in V\left(
G\right)  -\left\{  a,b\right\}  $. Hence, $M\left(  b\right)  $ is not $\mu
$-critical. In addition, if $b\notin N_{G}\left(  V(C)\right)  \cap
N_{G}\left(  \mathrm{core}\left(  G\right)  \right)  $, then $M\left(
b\right)  $ is not $\alpha$-critical. Thus, if $b\notin N_{G}\left(
V(C)\right)  \cap N_{G}\left(  \mathrm{core}\left(  G\right)  \right)  $, then%
\[
\alpha\left(  G-M\left(  b\right)  \right)  +\mu\left(  G-M\left(  b\right)
\right)  =\alpha\left(  G\right)  +\mu\left(  G\right)  =n\left(  G\right)
-1=n\left(  G-M\left(  b\right)  \right)  ,
\]
i.e., $G-M\left(  b\right)  $ is K\"{o}nig-Egerv\'{a}ry. Therefore,%
\[
\varrho_{v}\left(  G\right)  \geq%
{\displaystyle\sum\limits_{v\in V\left(  C\right)  }}
\left(  \deg\left(  v\right)  -2\right)  -\left\vert N_{G}\left(  V(C)\right)
\cap N_{G}\left(  \mathrm{core}\left(  G\right)  \right)  \right\vert .
\]
It is worth pointing out that the above inequality is based on the vertices
belonging to $N_{G}\left(  V\left(  C\right)  \right)  -V\left(  C\right)  $.
On the other hand, $G-x$ is K\"{o}nig-Egerv\'{a}ry for every $x\in V\left(
C\right)  $, because $G-x$ is bipartite. Consequently,
\begin{gather*}
\varrho_{v}\left(  G\right)  \geq\left\vert V\left(  C\right)  \right\vert +%
{\displaystyle\sum\limits_{v\in V\left(  C\right)  }}
\left(  \deg\left(  v\right)  -2\right)  -\left\vert N_{G}\left(  V(C)\right)
\cap N_{G}\left(  \mathrm{core}\left(  G\right)  \right)  \right\vert =\\%
{\displaystyle\sum\limits_{v\in V\left(  C\right)  }}
\deg\left(  v\right)  -\left\vert V\left(  C\right)  \right\vert -\left\vert
N_{G}\left(  V(C)\right)  \cap N_{G}\left(  \mathrm{core}\left(  G\right)
\right)  \right\vert ,
\end{gather*}
as stated.
\end{proof}

\begin{corollary}
\label{cor13}If $G$ is an almost bipartite non-K\"{o}nig-Egerv\'{a}ry graph
with the unique odd cycle $C$ such that $N_{G}\left(  V(C)\right)  \cap
N_{G}\left(  \mathrm{core}\left(  G\right)  \right)  =\emptyset$, then
\[
\varrho_{v}\left(  G\right)  \geq%
{\displaystyle\sum\limits_{v\in V\left(  C\right)  }}
\deg\left(  v\right)  -\left\vert V\left(  C\right)  \right\vert .
\]

\end{corollary}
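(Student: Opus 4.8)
The plan is to obtain this as an immediate specialization of the theorem proved immediately before it, which establishes the general lower bound
\[
\varrho_{v}\left(  G\right)  \geq
{\textstyle\sum\limits_{v\in V\left(  C\right)  }}
\deg\left(  v\right)  -\left\vert V\left(  C\right)  \right\vert -\left\vert N_{G}\left(  V(C)\right)  \cap N_{G}\left(  \mathrm{core}\left(  G\right)  \right)  \right\vert
\]
for every almost bipartite non-K\"{o}nig-Egerv\'{a}ry graph $G$ with unique odd cycle $C$. Under the extra hypothesis $N_{G}\left(  V(C)\right)  \cap N_{G}\left(  \mathrm{core}\left(  G\right)  \right)  =\emptyset$, the subtracted term $\left\vert N_{G}\left(  V(C)\right)  \cap N_{G}\left(  \mathrm{core}\left(  G\right)  \right)  \right\vert$ equals $0$, and the bound collapses exactly to the claimed inequality. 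So I would open the proof by invoking the preceding theorem, then substitute the vanishing intersection, and conclude.

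Since the entire quantitative content is already carried by the preceding theorem, there is essentially no new obstacle here: the only thing to verify is that the cardinality of the empty set is zero, which is immediate. If instead I had to reprove the statement from scratch without that theorem, the genuine work would lie in reconstructing its argument, namely: for each external neighbor $b\in N_{G}\left(  V(C)\right)  -V(C)$ one uses Lemma \ref{lem13} to locate a unique cycle-vertex $a\in V(C)\cap N_{G}(b)$ and a maximum critical independent set $A$ with $b\in N_{G}(A)$, then via Theorem \ref{th18} starts from a maximum matching missing $a$ but saturating $b$ and swaps to show $M(b)$ is neither $\mu$-critical nor (under the intersection-emptiness) $\alpha$-critical, so that $G-M(b)$ is K\"{o}nig-Egerv\'{a}ry; counting these vertices together with all of $V(C)$ (each of whose deletions yields a bipartite, hence K\"{o}nig-Egerv\'{a}ry, graph) produces the sum $\sum_{v\in V(C)}\left(\deg(v)-2\right)+\left\vert V(C)\right\vert$.

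For the corollary as stated, however, none of that needs to be redone. I would simply write: by the preceding theorem the general bound holds, and since $N_{G}\left(  V(C)\right)  \cap N_{G}\left(  \mathrm{core}\left(  G\right)  \right)  =\emptyset$ gives $\left\vert N_{G}\left(  V(C)\right)  \cap N_{G}\left(  \mathrm{core}\left(  G\right)  \right)  \right\vert =0$, the inequality reduces to
\[
\varrho_{v}\left(  G\right)  \geq
{\textstyle\sum\limits_{v\in V\left(  C\right)  }}
\deg\left(  v\right)  -\left\vert V\left(  C\right)  \right\vert ,
\]
as required.
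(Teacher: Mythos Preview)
Your proposal is correct and matches the paper's approach: the corollary is stated without proof, as it follows immediately from the preceding theorem by setting $\left\vert N_{G}\left(V(C)\right)\cap N_{G}\left(\mathrm{core}\left(G\right)\right)\right\vert=0$. Your one-line specialization is exactly what is intended.
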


The graph $G$ in Figure \ref{fig11222} shows that the inequality from
Corollary \ref{cor13} may be strict.

\begin{figure}[h]
\setlength{\unitlength}{1cm}\begin{picture}(5,1.2)\thicklines
\multiput(5,0)(1,0){5}{\circle*{0.29}}
\multiput(5,1)(1,0){5}{\circle*{0.29}}
\put(5,0){\line(1,0){4}}
\put(5,0){\line(0,1){1}}
\put(5,1){\line(1,-1){1}}
\put(6,1){\line(1,0){2}}
\put(6,1){\line(1,-1){1}}
\put(7,0){\line(1,1){1}}
\put(8,0){\line(1,1){1}}
\put(4.2,0.5){\makebox(0,0){$G$}}
\end{picture}\caption{$\alpha\left(  G\right)  =5$, $\mu\left(  G\right)  =4$,
$\varrho_{v}\left(  G\right)  =5$, and ${\displaystyle\sum\limits_{v\in
V\left(  C\right)  }}\deg\left(  v\right)  -\left\vert V\left(  C\right)
\right\vert =4$}%
\label{fig11222}%
\end{figure}

\begin{corollary}
If $G$ is a connected almost bipartite non-K\"{o}nig-Egerv\'{a}ry graph with
the unique odd cycle $C$ such that $n\left(  G\right)  >\left\vert V\left(
C\right)  \right\vert $ and $N_{G}\left(  V(C)\right)  \cap N_{G}\left(
\mathrm{core}\left(  G\right)  \right)  =\emptyset$, then there exists a
vertex $v\in V\left(  G\right)  -$ $V\left(  C\right)  $ such that $G-v$ is K\"{o}nig-Egerv\'{a}ry.
\end{corollary}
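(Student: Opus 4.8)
The plan is to show that the number $\varrho_{v}(G)$ of vertices whose deletion yields a König-Egerváry graph strictly exceeds $|V(C)|$; since all $|V(C)|$ vertices of $C$ already enjoy this property, any surplus must be realized by a vertex lying off the cycle. First I would invoke Corollary \ref{cor13}, which is available precisely because of the hypothesis $N_{G}(V(C)) \cap N_{G}(\mathrm{core}(G)) = \emptyset$, to obtain
\[
\varrho_{v}(G) \geq \sum_{v \in V(C)} \deg(v) - \left\vert V(C)\right\vert .
\]
Rewriting the right-hand side as $\sum_{v \in V(C)}(\deg(v) - 2) + |V(C)|$ isolates the degree contributed by the cycle edges (exactly $2$ per cycle vertex, since the unique odd cycle of an almost bipartite graph must be chordless) from the surplus degree produced by edges leaving $C$.

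Next I would quantify that surplus. By Lemma \ref{lem13} we have $\sum_{v \in V(C)}(\deg(v) - 2) = |N_{G}(V(C)) - V(C)|$, the number of neighbors of the cycle that lie outside it. Because $G$ is connected and $n(G) > |V(C)|$, there is at least one vertex outside $C$, and connectivity forces an edge joining $V(C)$ to $V(G) - V(C)$; hence $|N_{G}(V(C)) - V(C)| \geq 1$. Substituting back gives $\varrho_{v}(G) \geq |V(C)| + 1$.

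Finally I would close the argument by a counting comparison. Since $C$ is the only odd cycle of $G$, deleting any $v \in V(C)$ leaves a bipartite, hence König-Egerváry, graph, so each of the $|V(C)|$ cycle vertices is already counted in $\varrho_{v}(G)$. As the total count is at least $|V(C)| + 1$, it cannot be exhausted by the cycle vertices alone, so at least one vertex $v \in V(G) - V(C)$ must also satisfy that $G - v$ is König-Egerváry, which is exactly the assertion. The step I expect to be the main subtlety is the strictness of the inequality: Corollary \ref{cor13} only bounds the \emph{total} $\varrho_{v}(G)$ from below, so the whole argument rests on genuinely pushing this bound past $|V(C)|$; I would therefore make sure the extra ``$+1$'' is forced by $N_{G}(V(C)) - V(C) \neq \emptyset$ (which connectivity guarantees) rather than merely being possible.
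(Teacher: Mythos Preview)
Your proposal is correct and follows exactly the route the paper intends: the corollary is stated without proof immediately after Corollary~\ref{cor13}, and your argument spells out precisely the implicit reasoning---use Corollary~\ref{cor13}, note that connectivity together with $n(G)>|V(C)|$ forces $\sum_{v\in V(C)}(\deg(v)-2)=|N_G(V(C))-V(C)|\geq 1$ (the equality being recorded in the proof of Lemma~\ref{lem13}), conclude $\varrho_v(G)\geq |V(C)|+1$, and compare with the $|V(C)|$ cycle vertices already counted.
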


It is worth mentioning that there exist a connected almost bipartite
non-K\"{o}nig-Egerv\'{a}ry graph with the unique cycle $C$ such that $n\left(
G\right)  >\left\vert V\left(  C\right)  \right\vert $ and $\varrho_{v}\left(
G\right)  =\left\vert V\left(  C\right)  \right\vert $, for instance, see the
graph $G_{2}$ in Figure \ref{fig2}. In general, $\left\vert V\left(  C\right)
\right\vert <\varrho_{v}\left(  G\right)  $ if and only if $\left\vert
\mathrm{core}\left(  G\right)  \right\vert <\left\vert \mathrm{nucleus}\left(
G\right)  \right\vert $, in accordance with Corollary \ref{cor2}.

\section{Conclusions}

Theorem \ref{th5} claims that if $G$ is an almost bipartite
non-K\"{o}nig-Egerv\'{a}ry graph, then
\[
\varrho_{v}\left(  G\right)  =n\left(  G\right)  +d\left(  G\right)
-\xi\left(  G\right)  -\beta\left(  G\right)  ,
\]
while for general $1$-K\"{o}nig-Egerv\'{a}ry graphs $n\left(  G\right)
+d\left(  G\right)  -\xi\left(  G\right)  -\beta\left(  G\right)  $ is only an
upper bound for $\varrho_{v}\left(  G\right)  $ in accordance with Theorem
\ref{th10}. It motivates the following.

\begin{problem}
Characterize $1$-K\"{o}nig-Egerv\'{a}ry graphs such that
\[
\varrho_{v}\left(  G\right)  =n\left(  G\right)  +d\left(  G\right)
-\xi\left(  G\right)  -\beta\left(  G\right)  .
\]

\end{problem}

Theorem \ref{th2222} states that $\mathrm{\ker}(G)=\mathrm{core}(G)$ for each
almost bipartite non-K\"{o}nig-Egerv\'{a}ry graph. Hence, by definitions of
$\mathrm{\ker}(G)$ and $\mathrm{nucleus}(G)$, we see that%
\begin{gather*}
\mathrm{core}\left(  G\right)  =\mathrm{\ker}(G)=\\
=\bigcap\{S:S\in\mathrm{CritIndep}(G)\}\subseteq\bigcap\{S:S\in
\mathrm{MaxCritIndep}(G)\}=\\
=\mathrm{nucleus}(G)
\end{gather*}
for such graphs. Together with Corollary \ref{cor2}, it allows us to conclude
with the following.

\begin{corollary}
Let $G$ be an almost bipartite non-K\"{o}nig-Egerv\'{a}ry graph with the
unique odd cycle $C$. Then $\varrho_{v}\left(  G\right)  =\left\vert V\left(
C\right)  \right\vert $ if and only if $\mathrm{core}\left(  G\right)
=\mathrm{nucleus}\left(  G\right)  $.
\end{corollary}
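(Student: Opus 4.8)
The plan is to read the conclusion straight off Corollary~\ref{cor2}, which already delivers the identity $\varrho_{v}\left(  G\right)  =\left\vert V\left(  C\right)  \right\vert +\left\vert \mathrm{nucleus}\left(  G\right)  \right\vert -\left\vert \mathrm{core}\left(  G\right)  \right\vert$. From this formula it is immediate that $\varrho_{v}\left(  G\right)  =\left\vert V\left(  C\right)  \right\vert$ holds precisely when $\left\vert \mathrm{nucleus}\left(  G\right)  \right\vert -\left\vert \mathrm{core}\left(  G\right)  \right\vert =0$, that is, when the two sets have the same cardinality. So the entire content of the statement is the passage from equality of cardinalities to equality of the sets themselves.

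The ingredient that makes this passage legitimate is the inclusion $\mathrm{core}\left(  G\right)  \subseteq\mathrm{nucleus}\left(  G\right)$, which is exactly the chain recorded in the paragraph preceding the corollary. First I would invoke Theorem~\ref{th2222} to replace $\mathrm{core}\left(  G\right)$ by $\mathrm{\ker}(G)$, since for an almost bipartite non-K\"{o}nig-Egerv\'{a}ry graph these coincide. Then, by the very definitions of $\mathrm{\ker}(G)$ and $\mathrm{nucleus}(G)$ as intersections taken over \emph{all} critical independent sets and over all \emph{maximum} critical independent sets respectively, the former intersects the larger family, so $\mathrm{\ker}(G)\subseteq\mathrm{nucleus}(G)$. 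Chaining these yields $\mathrm{core}\left(  G\right)  \subseteq\mathrm{nucleus}\left(  G\right)$.

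With a containment between finite sets in hand, equality of cardinalities forces equality of the sets. Thus $\left\vert \mathrm{core}\left(  G\right)  \right\vert =\left\vert \mathrm{nucleus}\left(  G\right)  \right\vert$ is equivalent to $\mathrm{core}\left(  G\right)  =\mathrm{nucleus}\left(  G\right)$, and composing this with the cardinality criterion extracted from Corollary~\ref{cor2} yields the stated biconditional in both directions at once. I do not anticipate any real obstacle: once Corollary~\ref{cor2} and the inclusion $\mathrm{core}(G)\subseteq\mathrm{nucleus}(G)$ are available, the argument is essentially a one-line manipulation. The only point worth a moment of care is to confirm that the inclusion genuinely applies to $G$ — i.e., that the hypotheses of Theorem~\ref{th2222} are met, which they are since $G$ is almost bipartite and non-K\"{o}nig-Egerv\'{a}ry — so that we are comparing nested sets rather than arbitrary ones, which is precisely what converts the numerical equality into a set-theoretic one.
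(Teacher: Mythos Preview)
Your proposal is correct and follows essentially the same approach as the paper: the paper's argument, given in the paragraph immediately preceding the corollary, combines Corollary~\ref{cor2} with the inclusion $\mathrm{core}(G)=\mathrm{\ker}(G)\subseteq\mathrm{nucleus}(G)$ obtained from Theorem~\ref{th2222} and the definitions, exactly as you outline.
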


It is known that, in general, $\mathrm{core}\left(  G\right)  $ does not have
to be a subset of $\mathrm{nucleus}\left(  G\right)  $ and vice versa
\cite{Short2016}. It directs us to the following.

\begin{problem}
Characterize graphs such that $\left\vert \mathrm{nucleus}\left(  G\right)
\right\vert =\left\vert \mathrm{core}\left(  G\right)  \right\vert $.
\end{problem}

In this paper, most of our findings deal with $\varrho_{v}\left(  G\right)  $
for almost bipartite non-K\"{o}nig-Egerv\'{a}ry graphs. It was shown in
\cite{LevMan2024a} that if $G$ is neither K\"{o}nig-Egerv\'{a}ry nor
$1$-K\"{o}nig-Egerv\'{a}ry, then $\varrho_{e}\left(  G\right)  =0$. Theorem
\ref{th9} claims that if $G$ is K\"{o}nig-Egerv\'{a}ry, then $\varrho
_{e}\left(  G\right)  \leq m\left(  G\right)  -\xi\left(  G\right)
+\varepsilon\left(  G\right)  $. It justifies the following.

\begin{problem}
Bound $\varrho_{e}\left(  G\right)  $ using various graph invariants for
almost bipartite non-K\"{o}nig-Egerv\'{a}ry graphs.
\end{problem}

Clearly, $\left\vert V\left(  C\right)  \right\vert \leq\varrho_{e}\left(
G\right)  \leq$ $m\left(  G\right)  $ for every almost bipartite
non-K\"{o}nig-Egerv\'{a}ry graph $G$ with the unique odd cycle $C$.

\section{Declarations}

\textbf{Conflict of interest.} We declare that we have no conflict of interest.


\begin{thebibliography}{99}                                                                                               %


\bibitem {BartaKres2020}M. Bartha, M. Kr\'{e}sz, \emph{On the K\"{o}nig
deficiency of zero-reducible graphs}, Journal of Combinatorial Optimization
\textbf{39} (2020) 273--292.

\bibitem {BGL2002}E. Boros, M. C. Golumbic, V. E. Levit, \emph{On the number
of vertices belonging to all maximum stable sets of a graph}, Discrete Applied
Mathematics \textbf{124} (2002) 17--25.

\bibitem {ButTruk2007}S. Butenko, S. Trukhanov, \emph{Using critical sets to
solve the maximum independent set problem}, Operations Research Letters 35
(2007) 519--524.

\bibitem {dem}R. W. Deming, \emph{Independence numbers of graphs - an
extension of the K\"{o}nig-Egerv\'{a}ry theorem}, Discrete Mathematics
\textbf{27} (1979) 23--33.

\bibitem {HongliangYang2023}H. Lu, Z. Yang, \emph{On critical difference,
independence number and matching number of graphs}, Graphs and Combinatorics
\textbf{39} (2023) \#93.

\bibitem {JarLevMan2019}A. Jarden, V. E. Levit, E. Mandrescu, \emph{Monotonic
properties of collections of }\newline\emph{maximum independent sets of a
graph}, Order 36 (2019) 199--207.

\bibitem {Larson2007}C. E. Larson, \emph{A note on critical independence
reductions}, Bulletin of the Institute of Combinatorics and its Applications
\textbf{5} (2007) 34--46.

\bibitem {Larson2011}C. E. Larson, \emph{The critical independence number and
an independence decomposition}, European Journal of Combinatorics \textbf{32}
(2011) 294--300.

\bibitem {LevMan2002a}V. E. Levit, E. Mandrescu, \emph{Combinatorial
properties of the family of maximum stable sets of a graph}, Discrete Applied
Mathematics \textbf{117} (2002) 149--161.

\bibitem {LevMan2003}V. E. Levit, E. Mandrescu, \emph{On }$\alpha^{+}%
$\emph{-stable K\"{o}nig-Egerv\'{a}ry graphs}, \newline Discrete Mathematics
\textbf{263} (2003) 179--190.

\bibitem {LevMan2012a}V. E. Levit, E. Mandrescu, \emph{Critical independent
sets and K\"{o}nig-Egerv\'{a}ry graphs}, Graphs and Combinatorics \textbf{28}
(2012) 243--250.

\bibitem {LevMan2012b}V. E. Levit, E. Mandrescu, \emph{On the core of a
unicyclic graph}, Ars Mathematica Contemporanea \textbf{5} (2012) 321--327.

\bibitem {LevMan2012c}V. E. Levit, E. Mandrescu, \emph{Vertices belonging to
all critical independent sets of a graph}, SIAM Journal on Discrete
Mathematics \textbf{26} (2012) 399--403.

\bibitem {LevMan2013c}V. E. Levit, E. Mandrescu, \emph{Critical sets in
bipartite graphs}, Annals of Combinatorics \textbf{17 }(2013) 543--548.

\bibitem {LevMan2014}V. E. Levit, E. Mandrescu, \emph{A set and collection
lemma}, The Electronic Journal of Combinatorics \textbf{21} (2014) \#P1.40.

\bibitem {LevMan2019}V. E. Levit, E. Mandrescu, \emph{On
K\"{o}nig-Egerv\'{a}ry collections of maximum critical independent sets}, The
Art of Discrete and Applied Mathematics \textbf{2} (2019) \#P1.02.

\bibitem {LevMan2022}V. E. Levit, E. Mandrescu, \emph{On the critical
difference of almost bipartite graphs}, Journal of Algebraic Combinatorics
\textbf{56} (2022) 59--73.

\bibitem {LevMan2022b}V. E. Levit, E. Mandrescu, \emph{On maximum independent
sets of almost bipartite graphs}, Discrete Mathematics Days (2022) 306--312. https://doi.org/10.22429/Euc2022.016.

\bibitem {LevMan2022c}V. E. Levit, E. Mandrescu, \emph{Critical sets, crowns
and local maximum independent sets}, Journal of Global Optimization
\textbf{83} (2022) 481--495.

\bibitem {LevMan2023a}V. E. Levit, E. Mandrescu, \emph{On almost bipartite
non-K\"{o}nig-Egerv\'{a}ry graphs}, (2023) http://dx.doi.org/10.2139/ssrn.4397706.

\bibitem {LevMan2024}V. E. Levit, E. Mandrescu, \emph{On the number of
vertices/edges whose deletion preserves the K\"{o}nig-Egerv\'{a}ry property},
(2024) https://doi.org/10.48550/arXiv.2401.05523.

\bibitem {LevMan2024a}V. E. Levit, E. Mandrescu, \emph{On} $1$%
\emph{-K\"{o}nig-Egerv\'{a}ry graphs}, (2024)\newline https://doi.org/10.48550/arXiv:2308.03503v2.

\bibitem {Short2016}T. Short, \emph{On some conjectures concerning critical
independent sets of a graph}, The Electronic Journal of Combinatorics
\textbf{23} (2016), \#P2.43.

\bibitem {ster}F. Sterboul, \emph{A characterization of the graphs in which
the transversal number equals the matching number}, Journal of Combinatorial
Theory Series B \textbf{27} (1979) 228--229.

\bibitem {YHQ2022}Z. Yang, H. Lu, Q. Yu, \emph{Critical independent sets of
K\"{o}nig-Egerv\'{a}ry graphs}, \newline Discrete Applied Mathematics
\textbf{318} (2022) 1--5.

\bibitem {Zhang1990}C. Q. Zhang, \emph{Finding critical independent sets and
critical vertex subsets are }\newline\emph{polynomial problems}, SIAM Journal
on Discrete Mathematics \textbf{3} (1990) 431--438.
\end{thebibliography}
\end{document}